\newcommand{\cmark}{\ding{51}}%
\newcommand{\xmark}{\ding{55}}%
\newcommand{\diffto}{\xrightarrow{\raisebox{-0.2 em}[0pt][0pt]{\smash{\ensuremath{\sim}}}}}
\DeclareMathOperator{\N}{\mathbb{N}}
\DeclareMathOperator{\Z}{\mathbb{Z}}
\DeclareMathOperator{\R}{\mathbb{R}}
\DeclareMathOperator{\Cas}{Cas}
\DeclareMathOperator{\spl}{\mathfrak{sl}_2(\R)}
\DeclareMathOperator{\g}{\mathfrak{g}}
\DeclareMathOperator{\F}{\mathcal{F}}
\DeclareMathOperator{\Y}{\mathcal{Y}}
\DeclareMathOperator{\Yr}{\mathcal{Y}^{\mathrm{reg}}}
\DeclareMathOperator{\supp}{supp}
\DeclareMathOperator{\id}{id}
\DeclareMathOperator{\sign}{sign}
\DeclareMathOperator{\dif}{\mathrm{d}}
\DeclareMathOperator{\CC}{\mathscr{C}}
\DeclareMathOperator{\Lie}{\mathcal{L}}
\newtheorem{question}{Question}[section]
\newtheorem{theorem}{Theorem}[section]
\newtheorem*{theorem*}{Theorem}
\newtheorem{proposition}[theorem]{Proposition}
\newtheorem{lemma}[theorem]{Lemma}
\newtheorem{corollary}[theorem]{Corollary}
\newtheorem{remark}[theorem]{Remark}
\DeclareDocumentCommand{\norm}{ s m }{%
	\IfBooleanTF{#1}
	{#2}
	{\lVert{#2}\rVert}%
}
\DeclareDocumentCommand{\nl}{ s m }{%
	\IfBooleanTF{#1}
	{#2}
	{\lVert{#2}\rVert_{l}}%
}
\DeclareDocumentCommand{\fkn}{ s m }{%
	\IfBooleanTF{#1}
	{#2}
	{\lVert{#2}\rVert_{C^k (B_n)}}%
}
\DeclareDocumentCommand{\fan}{ s m }{%
	\IfBooleanTF{#1}
	{#2}
	{\lVert{#2}\rVert_{C^{2|a|} (B_n)}}%
}
\title{The Poisson cohomology of $\mathfrak{sl}_2^*(\mathbb{R})$}
\author{Ioan M\u{a}rcu\cb{t}, Florian Zeiser}
\begin{document}
\maketitle
 \begin{abstract}
  We compute the smooth Poisson cohomology of the linear Poisson structure associated with the Lie algebra $\spl$.
 \end{abstract}
\tableofcontents

\section{Introduction}

Let $M$ be a smooth manifold. The space of $C^{\infty}$-multi-vector fields on $M$:
 \begin{align*}
  \mathfrak{X}^{\bullet}(M)&:= \Gamma (\wedge ^{\bullet} TM)
 \end{align*}
carries a natural extension of the Lie bracket, called the Schouten-Nijenhuis bracket (see e.g.\ \cite{Laurent2013}), which makes $\mathfrak{X}^{\bullet}(M)$ into a graded Lie algebra:  
 \begin{equation*}
  \begin{array}{cccc}
   [\cdot,\cdot]:&\mathfrak{X}^{p+1}(M)\times \mathfrak{X}^{q+1}(M)&\to &\mathfrak{X}^{p+q+1}(M).
  \end{array}
 \end{equation*}
 
A Poisson structure on $M$ is a bivector field $\pi \in \mathfrak{X}^2(M)$ satisfying
\[[\pi,\pi]=0.\]
By the graded Jacobi identity, this equation is equivalent to 
\[\dif_{\pi}^2=0,\ \ \textrm{where}\ \ \dif_{\pi}:=[\pi,\cdot]:\mathfrak{X}^{\bullet}(M)\to \mathfrak{X}^{\bullet +1}(M).\]
The cohomology of the resulting chain complex
\[(\mathfrak{X}^{\bullet}(M), \dif_{\pi}),\]
is called the Poisson cohomology of $(M,\pi)$, and was introduced by Lichnerowicz \cite{Lich77}. The Poisson cohomology groups, denoted by 
\[H^{\bullet}(M,\pi),\] 
encode infinitesimal information about the Poisson structure. In low degrees they have the following interpretations: $H^0(M,\pi)$ consists of so-called Casimir functions, which are the ``smooth functions'' on the leaf-space; $H^1(M,\pi)$ plays the role of the Lie algebra of the ``Lie group'' of outer automorphisms of the Poisson manifold, $H^2(M,\pi)$ is the ``tangent space'' to the Poisson-moduli-space, or the space of infinitesimal deformations of the Poisson structure, and in $H^3(M,\pi)$ we can find obstructions to extending infinitesimal deformations to actual deformations. However, these interpretations are mostly of a heuristic or formal nature, since there are no general results asserting them, and their validity is poorly understood. 

Poisson cohomology is hard to compute due to the lack of general methods. The existing techniques are specialized to certain classes of Poisson structures, which we briefly outline below
\begin{itemize}
\item \underline{Mildly degenerate Poisson structures}. As noticed already in \cite{Lich77}, for symplectic structures (i.e.\ non-degenerate Poisson) the Poisson complex is isomorphic to the de Rham complex, and so it computes the usual (real) cohomology of the manifold; this is also the only case when the Poisson differential is elliptic. Similar techniques apply also to Poisson structures which are almost everywhere non-degenerate and have ``mild'' singularities. For these one can use singular de Rham forms. This was first worked out in dimension 2: for linear singularities in \cite{Radko}, for quadratic singularities in \cite{Nakanishi_97}, and for general singularities in \cite{Monnier}, and in general dimension: for log-symplectic structures in \cite{MO14,GMP,Lanius_1} and for higher order singularities in \cite{Lanius_2}.
\item \underline{Regular Poisson structures} have a non-singular symplectic foliation, which induces a filtration on the Poisson complex; the first pages of the resulting spectral sequence are described in terms of foliated cohomology \cite{Vaisman}. For simple foliations, this technique can be used to obtain explicit results as in \cite{Xu92}, or as in \cite{Gammella} where the Poisson cohomology of the regular part of certain duals of low-dimensional Lie algebras is calculated. 
\item \underline{Compact-type}. For the linear Poisson structure on the dual of a compact semi-simple Lie algebra, Conn showed that the Poisson cohomology vanishes in first and second degree, and he used this in the proof of the linearization theorem \cite{Conn85}.
The full Poisson cohomology associated to compact Lie algebras was calculated in \cite{GW92}. The proof therein uses averaging over the fibers of a source compact Lie groupoid. This technique has been extended to ``compact-type'' Poisson manifolds, already in \cite{Xu92} for simple, regular foliations, and more recently in \cite{PMCT} for Poisson manifolds which admit a source-proper Lie groupoid integrating them.
\item \underline{Other categories}. There are several calculations of Poisson cohomology in categories different from $C^{\infty}$, such as: formal, analytic, holomorphic, or algebraic Poisson cohomology. We will not discuss these results here, because the techniques involved are usually quite different and rarely of use in the $C^{\infty}$-setting.
\end{itemize}

In this paper we calculate the Poisson cohomology of the linear Poisson structure on the dual of the Lie algebra $\spl$ (see Theorem \ref{main theorem}):
\[(\mathfrak{sl}_2^*(\mathbb{R}),\pi).\]
Our interest in calculating this cohomology originates in our study of the local structures of ``generic Poisson structures'' in odd-dimension, which transversely to the singular leaves are linearly approximated by $\mathfrak{sl}_2^*(\mathbb{R})$ or $\mathfrak{so}_3^*(\R)$; the second case being well understood \cite{Conn85}. There are several other reasons to consider specifically this example. First, $\spl$ does not fit into any of the classes above for which techniques are known, and therefore its calculation requires some new insights and ideas. Secondly, semisimple Lie algebra have been considered many times in the Poisson framework, especially in relation to the problem of linearization \cite{Wein83,Conn84,Conn85,Wein87,GW92}. Finally, the Poisson cohomology of $(\mathfrak{sl}_2^*(\mathbb{R}),\pi)$ has a representation theoretic flavor, as it is isomorphic to the Chevalley-Eilenberg cohomology of $\spl$ with coefficients in the infinite-dimensional representation $C^{\infty}(\mathfrak{sl}_2^*(\mathbb{R}))$. 

As we will see in Section \ref{section: geometric interpretation}, all classes in $H^{\bullet}(\mathfrak{sl}_2^*(\mathbb{R}),\pi)$ have a clear geometric meaning,
and therefore the construction of representatives is quite intuitive. The difficult part, which will occupy most of the paper, is proving that the elements we construct cover all cohomology classes. This is also reflected in the literature, as, in one way or another, representatives for all classes have appeared in various contexts. In \cite[Prop 6.3]{Wein83}, Weinstein constructed a non-analytic deformation of $(\mathfrak{sl}_2^*(\mathbb{R}),\pi)$ which is non-linearizable. In fact, by our main result, all infinitesimal deformations in $H^2(\mathfrak{sl}_2^*(\mathbb{R}),\pi)$ are obtained by a similar procedure. Also with the aim of constructing deformations of semi-simple Lie algebras, the results in \cite{Wein87} and in \cite[Thm 4.3.9]{Zung_Book} yield non-trivial classes in $H^1(\mathfrak{sl}_2^*(\mathbb{R}),\pi)$, and to some extend, these classes have appeared also in \cite{Nakanishi_91}. Let us mention also that the Poisson cohomology of the regular part of $\mathfrak{sl}_2^*(\mathbb{R})$ was considered in \cite{Gammella}, where it is proven to be infinite dimensional. 
 
The formal and polynomial Poisson cohomology of $\spl$ can be calculated using standard representation theory (see e.g.\ \cite{Laurent2013,Pichereau, Nakanishi_91}), and, most likely, the analytic Poisson cohomology can be deduced using methods from  \cite{Conn84}. 

The paper is structured as follows. In Section \ref{section results} we state our main result, and build representatives for all Poisson cohomology classes. In Section \ref{section: geometric interpretation} we discuss the algebra of Casimir functions, we calculate the induced Schouten-Nijenhuis bracket in cohomology, we construct groups of outer automorphisms and deformations, and we study the action of outer-automorphisms on deformations. In Section \ref{section: formal}, by using the formal Poisson cohomology, we reduce the problem to that of calculating flat Poisson cohomology. In Section \ref{section: flat PC} we introduce the flat foliated complex, which, as in the regular case, can be used to compute flat Poisson cohomology. In Section \ref{section flat foli} we calculate the flat foliated cohomology. For this, we construct a retraction of the foliation to a subset, which represents the ``cohomological skeleton'' of the foliation, and show that the retraction is a homotopy equivalence. The retraction is built as the infinite flow of a vector field; the analysis of the flow of this vector field is left for Section \ref{section: analysis}.

Summarizing the main steps of the proof, we extract a general strategy for calculating Poisson cohomology (a version of this scheme was used in  \cite{Ginz96} for a specific 2-dimensional Poisson structure):
\begin{itemize}
\item[1.] Calculate the formal Poisson cohomology at the singularity of the foliation, and reduce the problem to calculating the cohomology of the ``flat Poisson complex'';
\item[2.] Treat this subcomplex as it came from a regular Poisson structure, and reduce the problem to calculating flat foliated cohomology;
\item[3.] For calculating (flat) foliated cohomology, try to build a contraction to a ``cohomological skeleton''.
\end{itemize}

In future work, we will attempt to use this strategy for other Poisson structures. In particular, we will continue the study of the Poisson cohomology associated to other semi-simple Lie algebras. 

 \section{The main result}\label{section results}

To state the results we identify $\mathfrak{sl}_2^*(\mathbb{R})$ with $\R^3$ in such a way that the linear Poisson structure is given by
 \begin{align*}
  \pi &:=x\partial _y \wedge \partial _z +y\partial_z \wedge \partial_x -z\partial_x\wedge \partial_y
 \end{align*}

The symplectic foliation of $\pi$ can be described with using the following basic Casimir function, which will be used throughout the paper:
 \begin{align}\label{Casimir}
  f(x,y,z)&:= x^2+y^2-z^2.
 \end{align}
 \begin{wrapfigure}{r}{4cm}
  \vspace{-20pt}
 \includegraphics[scale=0.3]{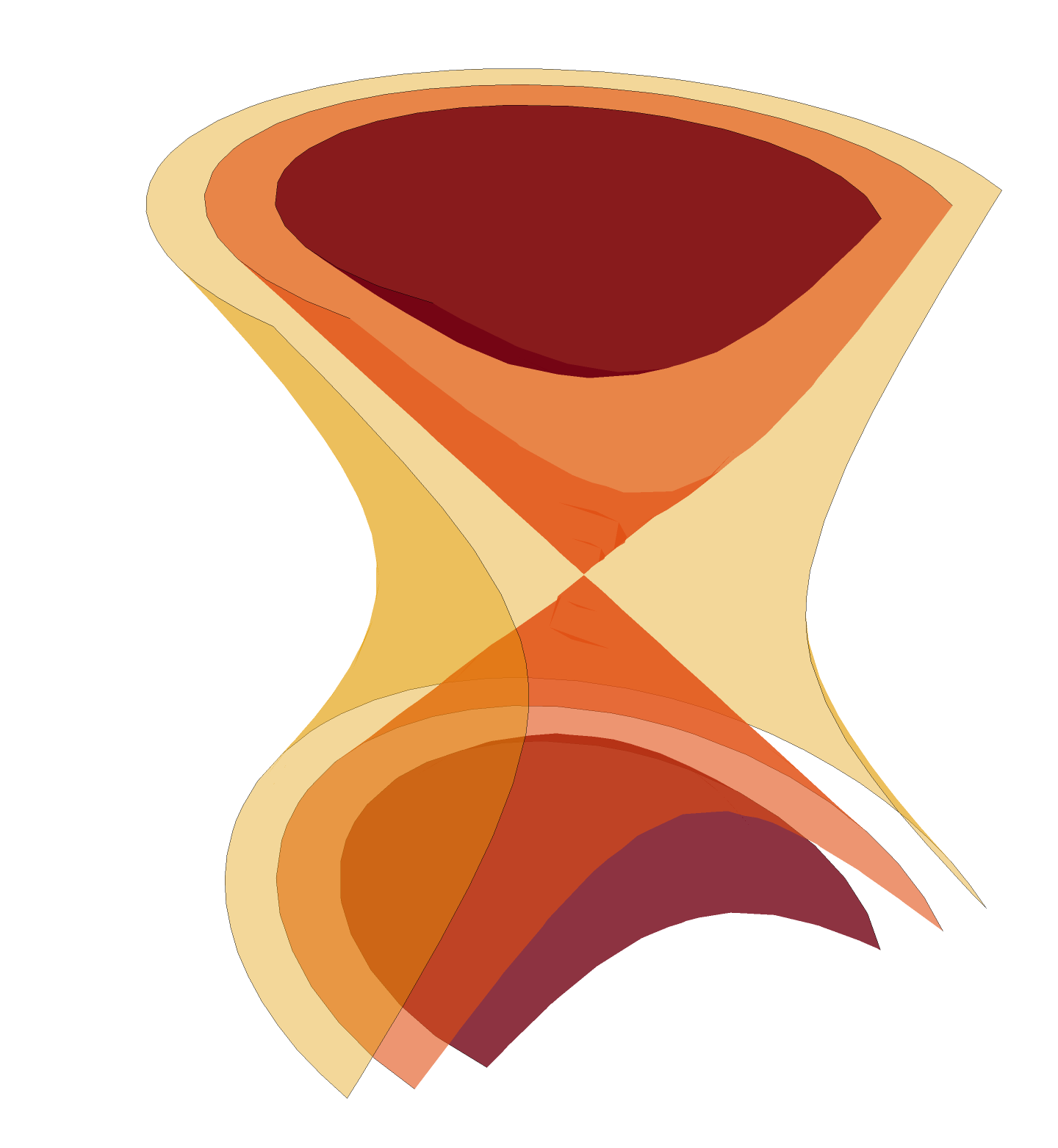}
 \caption{The level sets of $f$}
 \vspace{30pt}
 \end{wrapfigure}
The symplectic leaves are the following families of submanifolds: the one-sheeted hyperboloids
 \[S_{\lambda}:=f^{-1}(\lambda), \ \ \lambda>0;\]
the two sheets of the hyperboloids
\[S^{\pm}_{\lambda}:=f^{-1}(\lambda)\cap\{\pm z>0\}, \ \  \lambda<0;\]
and the cone $f^{-1}(0)$ decomposes into three leaves; 
\[S^{\pm}_{0}:=f^{-1}(0)\cap\{\pm z>0\}, \ \ S_0:=\{0\}.\]

The leaf-space, denoted by $\Y$, is obtained by identifying points belonging to the same leaf, and taking the quotient topology. The regular part of $\Y$:
\[\mathcal{Y}^{\mathrm{reg}}:=\Y\backslash\{S_0\}\]
is a smooth 1-dimensional non-Hausdorff manifold. Its smooth structure is determined by the quotient map being a submersion. Explicitly, a smooth atlas is given by: 
\[\big\{(U^+,\varphi^+),(U^{-},\varphi^-)\big\}\]  
\[ U^{\pm}=\big\{S^{\pm}_{\lambda} \ | \ \lambda\leq 0\big\}\cup \big\{S_{\lambda}\ | \ \lambda>0\big\}\subset \Yr,\]
\[\varphi^{\pm}:U^{\pm}\diffto  \R,\ \ \ \ S^{\pm}_{\lambda}\mapsto \lambda, \ \ 
 S_{\lambda}\mapsto \lambda.\]

\begin{wrapfigure}{r}{4.5cm}
 \vspace{20pt}
  \begin{tikzpicture}
  \draw (-1.5,0) -- (0,0);
  \draw (-3,0.2) -- (-1.5,0.2);
  \draw (-3,-0.2) -- (-1.5,-0.2);
  \draw[fill=black] (-1.5,0.2) circle (0.2ex);
  \draw[fill=white] (-1.5,0) circle (0.2ex);
  \draw[fill=black] (-1.5,-0.2) circle (0.2ex);
  \end{tikzpicture}
  \caption{$\Yr$}
 \end{wrapfigure}
This atlas allows us to identify $\Yr$ with two copies of $\R$ glued along $(0,\infty)$:
\[\Yr\simeq \R\sqcup_{(0,\infty)}\R.\]
The algebra of smooth functions on $\Yr$ is \[C^{\infty}(\Yr)=\{(h_1,h_2)\in C^{\infty}(\R)\times C^{\infty}(\R)\ | \ h_1|_{(0,\infty)}=h_2|_{(0,\infty)}\}.\]

The 0-th Poisson cohomology group consists of smooth function constant along the symplectic leaves, also called Casimir functions, denoted by:
\[\Cas(\mathfrak{sl}_2^*(\mathbb{R})):=H^0(\mathfrak{sl}_2^*(\mathbb{R}),\pi).\] 
In Subsection \ref{sub: Casimirs}, we will prove the following:
\begin{proposition}\label{prop:Casimirs}
The algebra of Casimir functions is isomorphic to the algebra of smooth functions on the regular part of the leaf-space: 
\begin{equation}\label{eq: functions on the orbit space}
C^{\infty}(\Yr)\simeq \Cas(\mathfrak{sl}_2^*(\mathbb{R})),
\end{equation}
and the isomorphism is given:
\[h=(h_1,h_2)\mapsto \widetilde{h},\]
\[
\widetilde{h}(x,y,z):=
\begin{cases} h_1 (f(x,y,z)), & z\geq 0 \\
h_2(f(x,y,z)),  & z < 0
\end{cases}.
\]
\end{proposition}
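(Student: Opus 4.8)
The plan is to verify that the stated map $h\mapsto\widetilde h$ is a well-defined algebra isomorphism by checking, in order, that it lands in the Casimirs, that it is injective, and that it is surjective; the homomorphism property is immediate since both algebra structures are pointwise multiplication. Throughout I will use the identification $H^0(\spl,\pi)=\ker\dif_{\pi}$ and the fact that $\dif_{\pi}g=0$ is equivalent to $g$ being constant along the symplectic leaves. The easy direction is that any function of the form $h_i\circ f$ is a Casimir on $\{\pm z>0\}$, since $f$ is a Casimir. For \emph{injectivity}, I would note that $f$ restricted to either closed half-space $\{z\ge 0\}$ or $\{z\le 0\}$ is surjective onto $\R$ (e.g.\ $f(0,0,\sqrt{-\lambda})=\lambda$ for $\lambda\le0$ and $f(\sqrt{\lambda},0,0)=\lambda$ for $\lambda\ge0$, both with the correct sign of $z$), so $\widetilde h\equiv 0$ forces $h_1\equiv h_2\equiv 0$.

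The hard part will be proving that $\widetilde h$ is genuinely smooth, i.e.\ that the piecewise definition glues across $\{z=0\}$. First I would record that the compatibility $h_1|_{(0,\infty)}=h_2|_{(0,\infty)}$ forces $\phi:=h_1-h_2$ to vanish to infinite order at $0$ (all one-sided derivatives from the right vanish, and these agree with the genuine derivatives since $\phi\in C^{\infty}(\R)$), and that $\phi$ is supported in $(-\infty,0]$. Writing $\widetilde h=h_2\circ f+\psi$, where $\psi:=\phi\circ f$ on $\{z\ge0\}$ and $\psi:=0$ on $\{z<0\}$, the term $h_2\circ f$ is smooth, so everything reduces to the smoothness of $\psi$. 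Now $\phi\circ f\in C^{\infty}(\R^3)$ vanishes wherever $f\ge0$; on the plane $\{z=0\}$ one has $f=x^2+y^2\ge0$, so $\phi\circ f$ vanishes identically in a full neighborhood of every point $(x_0,y_0,0)$ with $(x_0,y_0)\ne0$, while at the origin the flatness of $\phi$ together with $f=O(|p|^2)$ forces $\phi\circ f$ to be flat. Hence $\phi\circ f$ vanishes to infinite order along the entire hyperplane $\{z=0\}$, and therefore the function that equals it on $\{z\ge0\}$ and $0$ on $\{z<0\}$—which is exactly $\psi$—is $C^{\infty}$; this is the standard fact that a smooth function flat on the boundary of a half-space glues smoothly with $0$. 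This shows $\widetilde h\in C^{\infty}(\R^3)$, and since $\widetilde h$ is a Casimir on the dense open set $\{z\ne0\}$ while $\dif_{\pi}\widetilde h$ is continuous, it is a Casimir everywhere.

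Finally, for \emph{surjectivity} I would start from an arbitrary Casimir $g$ and reconstruct $(h_1,h_2)$. The map $f$ restricts to a surjective submersion $\{z>0\}\to\R$ whose fibers are exactly the connected leaves $S^{+}_{\lambda}$ and $S^{+}_{0}$, and $S_{\lambda}\cap\{z>0\}$ for $\lambda>0$, and it admits an explicit global smooth section, e.g.\ $s_{+}(\lambda):=(\sqrt{\lambda^2+\lambda+1},0,\sqrt{\lambda^2+1})$, with a mirrored section $s_{-}$ on $\{z<0\}$. Setting $h_1:=g\circ s_{+}$ and $h_2:=g\circ s_{-}$, the fact that $g$ is constant on the connected fibers of $f$ gives $g=h_1\circ f$ on $\{z>0\}$ and $g=h_2\circ f$ on $\{z<0\}$, whence $g=\widetilde h$ by continuity. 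The compatibility $h_1|_{(0,\infty)}=h_2|_{(0,\infty)}$ holds because for $\lambda>0$ the one-sheeted hyperboloid $S_{\lambda}=f^{-1}(\lambda)$ is a single connected leaf meeting both half-spaces, so $g$ takes one value on it. I expect the only genuinely delicate point in the whole argument to be the smoothness of $\psi$ at the origin; the remainder is bookkeeping with submersions with connected fibers and with the connectedness of the one-sheeted leaves.
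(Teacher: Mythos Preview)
Your proof is correct and follows essentially the same strategy as the paper: reduce smoothness of $\widetilde h$ to the case $h_2=0$ by subtracting $h_2\circ f$, then use that $h_1\circ f$ vanishes (hence is flat) along $\{z=0\}$ so that gluing with $0$ gives a smooth function; and for surjectivity, pull $g$ back along smooth sections of $f$ over the two half-spaces. The only cosmetic difference is your choice of sections $s_\pm$ confined to $\{\pm z>0\}$, whereas the paper uses the straight lines $\gamma_1(t)=\tfrac{1}{2}(0,-t-1,1-t)$ and $\gamma_2=-\gamma_1$, which cross $\{z=0\}$ but still hit each relevant leaf exactly once.
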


Denote the singular cone, its ``outside'' and its ``inside'', respectively, by
\[Z:=\{f=0\},\ \ \  O:=\{f>0\}, \ \ \ I:=\{f<0\}.\]
We introduce two Poisson vector fields $T$ and $N$ on $O\cup I$:
\begin{align*}
T|_O&:=\partial_z+\frac{z}{x^2+y^2}(x\partial_x+y\partial y),\ \ \ \ \  \ T|_I:=0,\\
N|_O&:=\frac{1}{2(x^2+y^2)}(x\partial_x+y\partial_y), \ \ \ \ \ \ N|_I:=\frac{1}{2(y^2-z^2)}(y\partial_y + z\partial_z).
\end{align*}
These formulas come from the special coordinate systems:
\begin{align}\label{coordinates O}
&\textrm{on } O: &\theta=\mathrm{tan}^{-1}\left(\frac{y}{x}\right),\ \  \ \ \ \ \ \  &w=z, &f=x^2+y^2-z^2,\\
&\textrm{on }I:\label{coordinates I} &\xi=\mathrm{tanh}^{-1}\left(\frac{y}{z}\right),\ \ \ \ \ \ \ \  &v=x, &f=x^2+y^2-z^2, 
\end{align}
in which:
\begin{equation}\label{coordinates pi O}
\pi|_O=\partial_{\theta}\wedge\partial_w, \ \ \ \ \   T|_O=\partial_w,\ \ \ \ \   N|_O=\partial_f,
\end{equation}
\begin{equation}\label{coordinates pi I}
\pi|_I=\partial_{\xi}\wedge\partial_v,\ \ \ \ \ \  T|_I=0,   \ \ \ \ \ \ \  N|_I=\partial_f.
\end{equation}
We will use the collection of flat Casimir functions:
\[C^{flat}=\big\{\chi\in \Cas(\mathfrak{sl}_2^*(\mathbb{R}))\ \  |\ \   \eta \textrm{ vanishes flatly at } 0 \big\}.\]
Proposition \ref{prop:Casimirs} implies that any $\chi \in C^{flat}$ vanishes flatly along the entire cone $Z$ (see Subsection \ref{sub: Casimirs}). Since the singularities of $T$ and $N$ along $Z$ are given by rational functions, if follows that, for all $\chi\in C^{flat}$, 
\[\chi T, \ \ \chi N,\]
extend to smooth vector fields on $\R^3$ that vanish flatly on $Z$. We will also use the collection of Casimir functions with support outside of the cone:
\[C^{out}=\big\{\eta \in \Cas(\mathfrak{sl}_2^*(\mathbb{R}))\ \  |\ \ \supp(\eta) \subset \overline{O}\big\}.\]

We state now the main result of the paper:

\begin{theorem}\label{main theorem}
The Poisson cohomology of $(\mathfrak{sl}_2^*(\mathbb{R}),\pi)$ is given by:
\begin{itemize}
\item Every class in $H^1(\mathfrak{sl}_2^*(\mathbb{R}),\pi)$ can be represented as
  \[ \chi N + \eta T\]
for unique functions $\chi \in C^{flat}$ and $\eta \in C^{out}$.
\item Every class in $H^2(\mathfrak{sl}_2^*(\mathbb{R}),\pi)$ can be represented as
\begin{align*}
  \eta N\wedge T
  \end{align*}
  for a unique function $\eta \in C^{out}$.
  \item For the third Poisson cohomology group we have
   \begin{align*}
   H^3(\mathfrak{sl}_2^*(\mathbb{R}),\pi)&\simeq \R[[f]]\ \partial_x\wedge\partial_y\wedge\partial_z,
   \end{align*}
   where $\R[[f]]$ denotes the ring of formal power series in $f$.
\end{itemize}
\end{theorem}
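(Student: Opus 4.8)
The plan is to split the Poisson complex into a formal part at the origin and a flat part, to compute the formal part by representation theory, and to reduce everything else to a foliated computation. Taylor expansion at $0$ gives a short exact sequence of complexes
\[0\to \mathfrak{X}^{\bullet}_{flat}\to \mathfrak{X}^{\bullet}\to \widehat{\mathfrak{X}}^{\bullet}\to 0,\]
whose quotient is the Chevalley--Eilenberg complex of $\spl$ with coefficients in $\R[[x,y,z]]$. Decomposing $\R[[x,y,z]]$ into finite-dimensional irreducibles and using that $H^{\bullet}(\spl,W)=0$ for every nontrivial irreducible $W$ while $H^{\bullet}(\spl,\R)=\R$ sits in degrees $0$ and $3$, together with the fact that the invariant power series are exactly $\R[[f]]$, I would obtain
\[\widehat{H}^{0}=\R[[f]],\qquad \widehat{H}^{1}=0,\qquad \widehat{H}^{2}=0,\qquad \widehat{H}^{3}=\R[[f]].\]
Because any formal Casimir lifts by Borel's lemma to a genuine smooth Casimir, hence to an honest cocycle, the connecting homomorphisms of the associated long exact sequence vanish. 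Consequently $H^{k}(\spl,\pi)\cong H^{k}_{flat}$ for $k=1,2$, while $H^{3}$ sits in a short exact sequence $0\to H^{3}_{flat}\to H^{3}(\spl,\pi)\to \R[[f]]\to 0$.

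Next I would compute the flat cohomology. Away from the origin $\pi$ is regular of corank one, so on the flat complex I would filter by transverse degree using the global transverse generator $N=\partial_f$ from \eqref{coordinates pi O}--\eqref{coordinates pi I}. Since the transverse bundle is a line and $\pi$ is independent of $f$ in these coordinates, the two-step filtration degenerates and yields
\[H^{k}_{flat}\;\cong\;H^{k}_{fol}\ \oplus\ H^{k-1}_{fol}\cdot N,\]
where $H^{\bullet}_{fol}$ denotes flat foliated (leafwise de Rham) cohomology. This is the step at which the representatives acquire their shape: a flat foliated $0$-class times $N$ gives the term $\chi N$, while the leafwise class carried by $T$ gives $\eta T$ in degree $1$ and $\eta\,N\wedge T$ in degree $2$. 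Here $T$ is leafwise: over $O$ the leaves are cylinders and $T=\partial_w$ is symplectically dual to $-d\theta$, the generator of $H^{1}(S^1)$, whereas over $I$ the leaves are planes and $T=0$, which is exactly why the coefficient of $T$ must vanish on the inside and hence lie in $C^{out}$.

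The heart of the argument is therefore the flat foliated cohomology, which I would compute by retracting the foliation onto a cohomological skeleton. The expected outcome is
\[H^{0}_{fol}=C^{flat},\qquad H^{1}_{fol}=C^{out},\qquad H^{2}_{fol}=0,\]
the vanishing in top degree reflecting that every leaf is an open surface. To prove this I would construct a vector field that flows along the $w$-direction over $O$ and collapses the planar leaves over $I$, realize the retraction onto the skeleton (the core circle over $O$, a point over $I$) as its infinite-time flow, and then show that this flow is a leafwise homotopy equivalence whose homotopy operators descend to the flat complex. The main obstacle is precisely the analysis of this flow: one must prove that the infinite-time flow converges, that it preserves smoothness and, crucially, flatness along the cone $Z$, and that the resulting homotopy operators are bounded uniformly as $Z$ is approached from either side. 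Controlling these flat estimates across the non-Hausdorff locus --- so that the glued leafwise-constant functions satisfy exactly the matching condition $h_1|_{(0,\infty)}=h_2|_{(0,\infty)}$ of Proposition~\ref{prop:Casimirs} --- is the delicate analytic point, which I would isolate in a separate section devoted to the flow.

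Finally I would reassemble the pieces. The foliated computation gives $H^{1}_{flat}=\{\chi N+\eta T\}$ and $H^{2}_{flat}=\{\eta\,N\wedge T\}$ with $\chi\in C^{flat}$ and $\eta\in C^{out}$, and $H^{3}_{flat}=H^{2}_{fol}\cdot N=0$; feeding $H^{3}_{flat}=0$ into the short exact sequence of the first paragraph yields $H^{3}(\spl,\pi)\cong\R[[f]]\,\partial_x\wedge\partial_y\wedge\partial_z$. For uniqueness of the representatives I would verify that the assignments $(\chi,\eta)\mapsto \chi N+\eta T$ and $\eta\mapsto \eta\,N\wedge T$ are injective, which amounts to showing that a coboundary $\dif_{\pi}X$ can match such a cocycle only when $\chi$ and $\eta$ both vanish; this I would check by reading off the transverse ($N$-) and leafwise ($T$-) components in the coordinates \eqref{coordinates pi O}--\eqref{coordinates pi I} and using that a leafwise-exact multiple of $d\theta$ forces its coefficient to be zero.
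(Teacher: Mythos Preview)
Your overall architecture matches the paper's exactly: split into formal and flat at the origin, compute the formal piece via Chevalley--Eilenberg and Whitehead, lift via Borel to kill the connecting maps, reduce the flat Poisson complex to flat foliated cohomology, and compute the latter by retracting onto a cohomological skeleton via the infinite-time flow of a tangential vector field. The paper also inserts an $S^1$-averaging step (using the rotational symmetry) before the retraction, which simplifies the analysis; you did not mention this, but it is a convenience rather than a conceptual ingredient.

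There is, however, one genuine imprecision in your reduction from flat Poisson to flat foliated cohomology. You propose to split the flat Poisson complex as $H^{k}_{fol}\oplus H^{k-1}_{fol}\cdot N$ by using the transverse vector field $N=\partial_f$ from the coordinates \eqref{coordinates pi O}--\eqref{coordinates pi I}. But $N$ is singular along the \emph{entire} cone $Z=\{f=0\}$, not only at the origin, whereas elements of the flat complex are only assumed flat at $0$; so contraction with $N$ does not preserve the flat complex, and your ``$\pi$ is independent of $f$'' argument only works on $O$ and $I$ separately, not across $Z$. The paper handles this by replacing $N$ with the vector field
\[V=\frac{1}{2(x^2+y^2+z^2)}\big(x\partial_x+y\partial_y-z\partial_z\big),\]
which has a finite-order rational singularity \emph{only at the origin} and still satisfies $\dif f(V)=1$. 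With $V$ (and the corresponding extension $\widetilde{\omega}$ of the leafwise symplectic form) one obtains a genuine short exact sequence of complexes on flat objects (Proposition~\ref{ses for flat things}); but $V$ is \emph{not} a Poisson vector field, so the sequence does not split and one really gets a long exact sequence, whose degeneracy follows only \emph{after} proving $H^{2}_0(\mathcal{F},\nu^*)=0$. The outcome is the same as your claimed direct sum, but the route to it is slightly more delicate than you indicate.
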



\section{Geometric interpretation}\label{section: geometric interpretation}

In this section we prove Proposition \ref{prop:Casimirs} and we explore the geometric meaning of our calculation of the Poisson cohomology of $(\mathfrak{sl}_2^*(\mathbb{R}),\pi)$. In particular, we calculate the Schouten-Nijenhuis bracket in cohomology, we describe groups of Poisson-diffeomorphisms ``integrating'' the first Poisson cohomology Lie algebra, we build deformations corresponding to the second Poisson cohomology, and describe some identifications between the deformations. The entire discussion leaves many open questions, which hopefully will be answered in the future.

\subsection{The algebra of Casimir functions}\label{sub: Casimirs}

\begin{wrapfigure}{l}{3cm}
\vspace{-5pt}
 \includegraphics[scale=0.2]{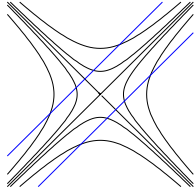}
 \caption{$\gamma_1$ and $\gamma_2$ intersecting the level sets of $f$ in the $y$-$z$-plane}
 \vspace{-30pt}
 \end{wrapfigure}
 
We begin with 
\begin{proof}[Proof of Proposition \ref{prop:Casimirs}]
First, we show that the map is indeed defined, i.e.\ for any  $h=(h_1,h_2)\in C^{\infty}(\Yr)$, the function $\widetilde{h}$ is indeed smooth. By subtracting $h_2\circ f$ from $\widetilde{h}$, we may assume that $h_2=0$. So let $h_1\in C^{\infty}(\R)$, with $h_1|_{(0,\infty)}=0$, and note that: 
\[
\widetilde{h}(x,y,z)=
\begin{cases} h_1 (f(x,y,z)), & z\geq 0\  \\
0,  & z < 0
\end{cases}.
\]
On $z\geq 0$, $h_1(f(x,y,z))$ is smooth and it vanishes on $f\geq 0$; in particular it vanishes flatly on the plane $z=0$. Therefore its extension by $0$ on $z<0$ is a smooth function on $\R^3$.

Next, we show that any Casimir function comes from an element in $C^{\infty}(\Yr)$. For this, we define two straight lines $\gamma_{1},\gamma_{2}:\R\to \mathfrak{sl}_2^*(\mathbb{R})$:
\begin{equation}\label{eq:curves}
\gamma_{1}(t)=\frac{1}{2}\big(0,-t-1,1-t\big), \ \ 
\gamma_{2}(t)=-\gamma_{1}(t).
\end{equation}
Both lines are transverse to the leaves of the foliation and satisfy:
\begin{equation}\label{eq:transversals}
f\circ \gamma_{1}(t)=t,  \ \ \ \ \ f\circ \gamma_{2}(t)=t.
\end{equation}
Both lines cut leaves at most once; their intersections are indicated below:
\begin{center}
\begin{tabular}{||c||c|c|c|c|}
\hline
& $S_0$ 
& $S_{\lambda}$, 
$\lambda>0$ & 
$S^{-}_{\lambda}$, 
$\lambda\leq 0$ & 
$S^{+}_{\lambda}$,\  
$\lambda\leq 0$ \\ 
\hline \hline
$\gamma_1$ & \xmark & \cmark & \xmark & \cmark\\ 
\hline
$\gamma_2$ & \xmark   & \cmark & \cmark & \xmark\\
\hline
\end{tabular}
\end{center}

Consider a Casimir function $g\in \Cas(\mathfrak{sl}_2^*(\mathbb{R}))$, and denote by
\[ h_1:=g\circ \gamma_1\in C^{\infty}(\R), \ \ h_2:=g\circ \gamma_2\in C^{\infty}(\R).\]
Since for $t>0$, $\gamma_1(t),\gamma_2(t)\in S_t$, it follows that $
h_1(t)= h_2(t)$, and so 
\[h=(h_1,h_2)\in C^{\infty}(\Yr).\]
We have that $g=\widetilde{h}$. This follows because both are Casimir functions, their compositions with the $\gamma_1$ and $\gamma_2$, respectively, yield the same result, and the two lines cut all regular leaves.
\end{proof}

Next, let us note that under the isomorphism \eqref{eq: functions on the orbit space}, we have that 
\[C^{flat}\simeq C^{\infty}_0(\Yr),\]
where $C^{\infty}_0(\Yr)$ consists of pairs $(h_1,h_2)\in C^{\infty}(\Yr)$ with the property that $h_1$ and $h_2$ vanish flatly at their $0$s, respectively. This follows by comparing the Taylor series at $0$ of $h_{1}(t^2)=\widetilde{h}(t,0,0)$; and similarly for $h_2$. Hence Casimirs which vanish flatly at the origin, actually vanish flatly along $Z$.

Note also that, under the isomorphism \eqref{eq: functions on the orbit space}, we have that 
\[C^{out}\simeq C^{\infty}_{\to}(\Yr),\]
where $C^{\infty}_{\to}(\Yr)$ consists of pairs $(h_1,h_2)\in C^{\infty}(\Yr)$ satisfyng: $h_1=h_2$ and $\supp(h_1)\subset [0,\infty)$. 


\subsection{The Schouten-Nijenhuis bracket}
The Schouten-Nijenhuis bracket on multi-vector fields descends to a bracket on Poisson cohomology, which can be easily calculated for $(\mathfrak{sl}_2^*(\mathbb{R}),\pi)$.

First, note that $T$ is tangent to the symplectic foliation, therefore
\[\Lie_T(g)=0, \ \ \textrm{for all}\ \ g\in\Cas(\mathfrak{sl}_2^*(\mathbb{R})).\]
This implies that 
\[\mathfrak{g}_T:=\{\eta T\ | \ \eta\in C^{out}\}\]
is an abelian subalgebra.

Next, note that $N$ is transverse to the symplectic foliation on $\R^3 \setminus Z$, with 
\[\Lie_N(f|_{R^3\setminus Z})=1.\]
Moreover, for $g\in \Cas(\mathfrak{sl}_2^*(\mathbb{R}))$ we have that $\Lie_N(g)$ extends to a smooth Casimir on $\R^3$. This follows because locally $N=\partial_f$, and so if $g$ corresponds to the pair $h=(h_1,h_2)\in C^{\infty}(\Yr)$, then $\Lie_N(g)$ corresponds to the pair $\partial_t h=(\partial_t h_1,\partial_t h_2)\in C^{\infty}(\Yr)$. Thus, although $N$ is only smooth on the set $\R^3 \setminus Z$, its Lie derivative $\Lie_N$ induces a derivation of the algebra of Casimir functions, denoted by
\[\partial_f:\Cas(\mathfrak{sl}_2^*(\mathbb{R}))\to \Cas(\mathfrak{sl}_2^*(\mathbb{R})),\]
which corresponds under the isomorphism \eqref{eq: functions on the orbit space} to $\partial_t$. We obtain that 
\[\mathfrak{g}_N:=\{\chi N \ | \ \chi\in C^{flat}\}\]
is a Lie subalgebra, which is isomorphic to the Lie algebra of vector fields on $\Yr$ which are flat at the origin(s):
\begin{equation}\label{g_N}
(\mathfrak{g}_N,[\cdot,\cdot])\simeq (\mathfrak{X}_0^1(\Yr),[\cdot,\cdot]).
\end{equation}

In the coordinates \eqref{coordinates pi O}, it is obvious that on $O\cup I$ 
\[[N,T]=0.\]
Using the Leibniz rule, this allows us to calculate other brackets, for example:
\begin{equation}\label{Schouten}
[\chi N, \eta N\wedge T]=(\chi\partial_f(\eta)-\eta\partial_f(\chi))N\wedge T.
\end{equation}

\begin{remark}
It is somehow surprising that the representatives we found for Poisson cohomology in degree $\leq 2$ are closed under the Schouten-Nijenhuis bracket.
\end{remark}
 Since all 3-vector fields that are flat at 0 are trivial in cohomology, we obtain the following:

\begin{corollary}\label{bracket}
The bracket induced from the Schouten-Nijenhuis bracket on Poisson cohomology
\begin{equation*}
\begin{array}{cccc}
[\cdot,\cdot]:&H^p(\mathfrak{sl}_2^*(\mathbb{R}),\pi)\times H^q(\mathfrak{sl}_2^*(\mathbb{R}),\pi)&\to&H^{p+q-1}(\mathfrak{sl}_2^*(\mathbb{R}),\pi)
\end{array}
\end{equation*}
is non-zero only for $p+q\leq 3$, and in these degrees it is 
determined by the Leibniz identity and the following relations: 
\[[N, g]=\partial_f(g), \ \ \  \ [T, g]=0,\ \  \ \ [ N, T]=0,\]
for all $g\in \Cas(\mathfrak{sl}_2^*(\mathbb{R}))$.
\end{corollary}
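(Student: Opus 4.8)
The plan is to first record that the bracket is well defined on cohomology, and then to reduce everything to the elementary relations already established on the dense open set $O\cup I$. Since the graded Jacobi identity makes $\dif_{\pi}=[\pi,\cdot]$ a degree $+1$ derivation of the Schouten--Nijenhuis bracket, the bracket of two cocycles is a cocycle and the bracket of a cocycle with a coboundary is a coboundary; placing $\mathfrak{X}^{k}(\mathfrak{sl}_2^*(\mathbb{R}))$ in cohomological degree $k$ gives the stated map $H^{p}\times H^{q}\to H^{p+q-1}$. The degree count already disposes of $p+q\geq 5$, where the target vanishes, so only $p+q\leq 3$ (genuine candidates) and the borderline case $p+q=4$, landing in $H^3$, remain. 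The whole computation rests on Theorem \ref{main theorem}: in degrees $0,1,2$ every class is represented by a multivector built from the Poisson vector fields $N,T$ and Casimir functions via multiplication and wedge.

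For $p+q\leq 3$ I would argue that the bracket is forced by biderivation. The Schouten bracket is a graded biderivation, so any bracket among representatives of this form is determined by the elementary brackets of the generators $N$, $T$, $g\in\Cas(\mathfrak{sl}_2^*(\mathbb{R}))$, namely by $[N,N]=[T,T]=0$, $[g,g']=0$, together with the three relations $[N,g]=\partial_f g$, $[T,g]=0$, $[N,T]=0$. These three relations were checked on $O\cup I$ in the coordinates \eqref{coordinates pi O}--\eqref{coordinates pi I}; they promote to global identities of smooth multivector fields because all the objects involved, $\chi N$, $\eta T$, $\eta N\wedge T$ and their brackets, vanish flatly along $Z$, so an identity on the dense set $O\cup I$ extends across $Z$. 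Applying the Leibniz rule then yields explicit formulas such as \eqref{Schouten}; one checks routinely that the resulting coefficients again lie in $C^{out}$ (using that $C^{flat}$ and $C^{out}$ Casimirs agree on $\overline{O}$ and that $\partial_f$ preserves both classes), so the output is once more a representative of the stated form. This proves the determination claim in degrees $p+q\leq 3$.

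The substance is the borderline case $p+q=4$, landing in $H^3$, where I must show the bracket vanishes. By the remark preceding the statement it suffices to show that the cochain-level brackets are flat at the origin, since flat $3$-vector fields are trivial under the identification $H^3\simeq\R[[f]]\,\partial_x\wedge\partial_y\wedge\partial_z$. There are two essentially different subcases (the third, $H^3\times H^1$, follows by graded antisymmetry). For $H^2\times H^2$ I would compute $[\eta_1 N\wedge T,\eta_2 N\wedge T]$ directly in the coordinates \eqref{coordinates pi O}: on $O$ both coefficients $\eta_i$ depend only on $f$ and both bivectors carry the common factor $T=\partial_w$, so the Leibniz expansion collapses via $\partial_w\wedge\partial_w=0$ and the bracket vanishes identically (on $I$ both bivectors already vanish since $T|_I=0$). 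For $H^1\times H^3$, writing a top class as $\psi\,\partial_x\wedge\partial_y\wedge\partial_z$, the bracket with $\chi N+\eta T$ is, up to the standard divergence term, $\Lie_{\chi N+\eta T}(\psi\,\partial_x\wedge\partial_y\wedge\partial_z)$.

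The key observation driving this last case — and the point I expect to require the most care — is that both $\chi N$ and $\eta T$ are flat along $Z$: this holds for $\chi N$ because $\chi\in C^{flat}$, and for $\eta T$ because $\eta\in C^{out}$ corresponds under \eqref{eq: functions on the orbit space} to $h(f)$ with $\supp(h)\subset[0,\infty)$, and a smooth function of one variable supported in $[0,\infty)$ is automatically flat at $0$, hence $\eta$ is flat on $Z$. Consequently the Lie derivative of the top class along $\chi N+\eta T$ is a $3$-vector field flat at the origin, and therefore trivial in $H^3$. This establishes that all brackets with $p+q=4$ vanish in cohomology, completing the proof that the induced bracket is nonzero only for $p+q\leq 3$ and is there governed by the three displayed relations. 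The only genuinely delicate steps are the flatness bookkeeping for $C^{out}$ and the sign/divergence conventions in the $\Lie$-computation; the algebra itself is routine once the elementary brackets are in hand.
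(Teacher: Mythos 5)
Your proposal is correct and follows essentially the same route as the paper: establish the elementary relations $[N,g]=\partial_f(g)$, $[T,g]=0$, $[N,T]=0$ in the coordinates \eqref{coordinates pi O}--\eqref{coordinates pi I} and extend them across $Z$ by flatness and density, let the Leibniz rule determine the brackets among the representatives built from $N$, $T$ and Casimirs, and dispose of $p+q=4$ by noting that the resulting $3$-vector fields are flat at the origin and hence trivial in $H^3(\mathfrak{sl}_2^*(\mathbb{R}),\pi)$. Your explicit case analysis for $p+q=4$ (the identical vanishing of $[\eta_1 N\wedge T,\eta_2 N\wedge T]$ and the flatness of $\Lie_{\chi N+\eta T}$ applied to a top class) simply spells out what the paper compresses into the single remark that flat $3$-vector fields are cohomologically trivial.
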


In particular, note that $\mathfrak{g}_N$ and $\mathfrak{g}_T$ span a Lie subalgebra, which is a semi-direct product
$\mathfrak{g}_T\rtimes \mathfrak{g}_N$, because:
\[[\chi N, \eta T]=\chi\partial_f(\eta) T.\]

\subsection{Poisson-diffeomorphisms}

Denote the Lie algebra of Poisson vector fields by:
\[\mathfrak{poiss}:=\{X\in \mathfrak{X}(\mathfrak{sl}_2^*(\mathbb{R}))\ | \ \Lie_X\pi=0 \},\]
and the ideal of Hamiltonian vector fields by:
\[\mathfrak{ham}:=\{X_{h}:=\pi^{\sharp}(\dif h)\ | \ h\in C^{\infty}(\mathfrak{sl}_2^*(\mathbb{R}))\}.\]
The quotient Lie algebra is the first Poisson cohomology:
\[H^1(\mathfrak{sl}_2^*(\mathbb{R}),\pi)=\mathfrak{poiss}/\mathfrak{ham}\simeq \mathfrak{g}_T\rtimes \mathfrak{g}_N.\]
Note that $\mathfrak{poiss}$ has a 3-term filtration by ideals:
\[0\, \unlhd\, \mathfrak{ham}\, \unlhd\, \mathfrak{ham}\rtimes \mathfrak{g}_{T}
\, \unlhd\, (\mathfrak{ham}\rtimes \mathfrak{g}_{T})\rtimes \mathfrak{g}_N=\mathfrak{poiss},\]
and $\mathfrak{ham}\rtimes \mathfrak{g}_{T}$ consists of the Poisson vector fields tangent to the foliation.

Next, we describe groups corresponding to these Lie algebras. It would be interesting to understand to what extend these groups are smooth or integrate the Lie algebras. Denote the Poisson-diffeomorphism group by
\[\mathrm{Poiss}:=\{\varphi\in \mathrm{Diff}(\mathfrak{sl}_2^*(\mathbb{R})), \ \ \varphi_*(\pi)=\pi\},\]
and the (normal) Hamiltonian subgroup by: 
\[\mathrm{Ham}\, \unlhd\, \mathrm{Poiss}.\]
The group $\mathrm{Ham}$ consists of diffeomorphisms $\varphi$ that can be connected to the identity by a smooth family of diffeomorphisms
\[\{\varphi_t\}_{t\in[0,1]}, \ \ \ \varphi_0=\mathrm{id}, \ \ \ \varphi_1=\varphi\]
that is generated by a smooth family of Hamiltonians $\{h_t\in C^{\infty}(\mathfrak{sl}_2^*(\mathbb{R}))\}_{t\in [0,1]}$:
\[\varphi'_t=X_{h_t}\circ \varphi_t, \ \ \ X_{h_t}=\pi^{\sharp}(\dif h_t).\]

Next, we associate to $\mathfrak{g}_{T}$ an abelian group:
\[G_T:=\big\{\, \mathrm{exp}(\eta T):=\textrm{time-one flow of  }\eta T\ | \ \eta \in C^{out}\big\}.\]
To see that the vector fields $\eta T$ are indeed complete, and that $G_T$ is indeed an abelian group, we use the coordinates from \eqref{coordinates pi O} $(\theta,w,f)\in S^1\times \R\times (0,\infty)$ on $O=\{f>0\}$, in which: 
\[\pi|_O=\partial_{\theta}\wedge\partial_{w}, \ \ T|_O=\partial_{w}.\]
Then the flow of $\eta T$, with $\eta=h\circ f\in C^{out}$, and $\mathrm{supp}(h)\subset [0,\infty)$, is 
\[\exp(t\eta T)(\theta,w,f)=(\theta,w+t\, h(f),f),\]
in particular, it is defined for all $t\in \R$. Because it vanishes on $\overline{I}$, $\eta T$ is complete. Note that the flow preserves the leaves. The leaves in $O$ are sent by the chart symplectomorphically to the cotangent bundle of the circle: 
\[S_{\lambda}\simeq T^*S^1.\]
Under this identification, $\exp(\eta T)$ acts by translation with $h(\lambda)\dif \theta\in \Omega^1(S^1)$. 

By the formula for the flow, the exponential is a group isomorphism:
\[\exp:\mathfrak{g}_T\diffto G_T, \ \ \exp(\eta_1T+\eta_2T)=\exp(\eta_1T)\circ \exp(\eta_2T).\]
The subgroup corresponding to $\mathfrak{ham}\rtimes \mathfrak{g}_T$ is the semi-direct product:
\[\mathrm{Ham}\rtimes G_T.\]
It would be interesting to know whether $\mathrm{Ham}\rtimes G_T$ can be characterized geometrically by the following property:
\begin{question}
Does a Poisson diffeomorphism that sends each leaf to itself belong to 
$\mathrm{Ham}\rtimes G_T$? 
\end{question}

Recall that $\mathfrak{g}_N$ is isomorphic to the Lie algebra of vector fields on $\Yr$ that are flat at the origin(s) \eqref{g_N}. Next, we build a group $G_N$ corresponding to $\mathfrak{g}_N$, which will be isomorphic to the group of diffeomorphisms of $\Yr$ which are flat at the origin(s). First consider the group
\[\mathrm{Diff}^0_0(\Yr)\subset \mathrm{Diff}(\Yr)\]
consisting of pairs $(\phi_1,\phi_2)$ of diffeomorphisms of $\R$ which fix the origin up to infinite jet (i.e.\ $\phi_i-\mathrm{id}_{\R}$ vanishes flatly at $0$), and such that $\phi_1|_{(0,\infty)}=\phi_2|_{(0,\infty)}$. For $\phi=(\phi_1,\phi_2)\in \mathrm{Diff}^0_0(\Yr)$, we build an element $\widetilde{\phi}\in G_N$. In the chart \eqref{coordinates O} on $O$, define:
\[\widetilde{\phi}(\theta,w,f)=(\theta,w,\phi_1(f))=(\theta,w,\phi_2(f)),\]
in the chart \eqref{coordinates I} on $I\cap \{z> 0\}$, define:
\[\widetilde{\phi}(\xi,v,f)=(\xi,v,\phi_1(f)),\]
and in the chart \eqref{coordinates I} on $I\cap \{z< 0\}$, define:
\[\widetilde{\phi}(\xi,v,f)=(\xi,v,\phi_2(f)).\]

Note that these three expressions extend to $Z=\{f=0\}$ as the identity map, and they coincide along $Z$ with the identity up to infinite jet. Therefore $\widetilde{\phi}$ is indeed smooth (one can also transform $\widetilde{\phi}$ to usual coordinates to check this). The local expression of $\pi$ in the charts \eqref{coordinates pi O} and \eqref{coordinates pi I}, implies that $\widetilde{\phi}$ is a Poisson diffeomorphism. Let $G_N^{0}$ be the collection of all $\widetilde{\phi}\in \mathrm{Poiss}$, with $\phi\in \mathrm{Diff}_{0}^0(\Yr)$. Since $\widetilde{\phi}$ induces $\phi$ on the regular leaf-space, we have that:
\[G_N^0\simeq \mathrm{Diff}_{0}^0(\Yr).\]

\begin{wrapfigure}{r}{5cm}
 \vspace{-5pt}
  \begin{tikzpicture}
  \draw (-2,0) -- (0,0);
  \draw (-4.5,0.5) -- (-2.5,0.5);
  \draw (-4.5,-0.5) -- (-2.5,-0.5);
  \draw[fill=white] (-2,0) circle (0.2ex);
  \draw[fill=black] (-2.5,0) circle (0.2ex);
  \draw[fill=black] (-2.5,0.5) circle (0.2ex);
  \draw[fill=black] (-2.5,-0.5) circle (0.2ex);
  \draw (-3.5,0.4) edge[ <->, bend right=10] node[auto] {$\tau$} (-3.5,-0.4) ;  
  \end{tikzpicture}
  \caption{$\tau$ acting on $\Y$}
  \vspace{-20pt}
 \end{wrapfigure}
Next, consider the reflection:
 \begin{equation*}
  \begin{array}{cccc}
  \tau:&\R^3&\to&\R^3\\
  &(x,y,z)&\mapsto&(x,-y,-z),
  \end{array}
 \end{equation*}
and note that $\tau$ is a Poisson involution, i.e.\ \[\tau_*(\pi)=\pi,\ \ \tau^2=\mathrm{id},\] 
and it interchanges the leaves $S^{+}_{\lambda}$ and $S^{-}_{\lambda}$, $\lambda \leq 0$. We denote also by $\tau$ the diffeomorphism induced on $\Yr$.
Note that $\tau$ normalizes $G_N^0$ and $\mathrm{Diff}_0^0(\Yr)$, and in fact:
\[\tau\cdot \widetilde{\phi}\cdot  \tau=\widetilde{\tau \cdot \phi\cdot  \tau}, \ \ \tau \cdot (\phi_1,\phi_2)\cdot \tau= (\phi_2,\phi_1).\]
Therefore the following groups are isomorphic: 
\begin{align*}
G_N&:=G_N^0\, \cup\, G_N^0\cdot \tau, \\
\mathrm{Diff}_0(\Yr)&:=
\mathrm{Diff}_0^0(\Yr)\,
\cup\, \mathrm{Diff}_0^0(\Yr)\cdot \tau.
\end{align*}

Note that $G_N$ normalizes $G_T$:
\begin{align*}
\exp(\eta T)\cdot \widetilde{\phi}&=\widetilde{\phi}\cdot \exp(\widetilde{\phi}^*(\eta)\, T),\\
\exp(\eta T)\cdot \tau& =\tau\cdot  \exp(-\eta T),
\end{align*}
where $\widetilde{\phi}^*(\eta)=
\widetilde{\phi^*h}$, for $\eta=\widetilde{h}$.

We obtain the group $G_T\rtimes G_N$, which in principle is the group of outer-automorphisms of the Poisson manifold. It would be interesting to know whether this is a correct interpretation: 
\begin{question}
Is the natural map $G_T\rtimes G_N\to \mathrm{Poiss}/\mathrm{Ham}$ an isomorphism? Equivalently, is it true that:
\[\mathrm{Poiss}=\mathrm{Ham}\rtimes G_T\rtimes G_N?\]
\end{question}

\subsection{Deformations}

The second Poisson cohomology has the heuristic interpretation of being the ``tangent space" to the Poisson-moduli space.
In our case, by Theorem \ref{main theorem}, every class in $H^2(\mathfrak{sl}_2^*(\mathbb{R}),\pi)$ has a unique representative of the form 
\[\eta N\wedge T,\ \textrm{with} \ \ \eta\in C^{out}.\]
Since the Schouten bracket is trivial on these elements, it follows that
\[\pi_{\eta}:=\pi+\eta N\wedge T, \ \ \textrm{with}\ \ \eta\in C^{out}\]
is a Poisson structure. In other words, infinitesimal deformations are unobstructed. Note that these are precisely the deformations of $\pi$ constructed by Weinstein in \cite[Prop 6.3]{Wein83} to show that $\spl$ is smoothly degenerate. The Poisson structure $\pi_{\eta}$ differs from $\pi$ only on $O=\{f> 0\}$. Using the coordinates $(\theta,w,f)$ from \eqref{coordinates O} on $O$ and writing $\eta=h\circ f$, with $\supp h\subset [0,\infty)$, we have that:
\[\pi_{\eta}|_O=\big(\partial_{\theta}+h\circ f\,\partial_{f} \big)\wedge\partial_w.\]
Note that the leaves of $\pi_{\eta}$ are
perturbations of the cylinders $S_{\lambda}$. In order to understand their shape, note that the leaves of $\pi_{\eta}$ cut the plane $z=0$ in the flow lines of the Hamiltonian vector field of $-z$:
\begin{align*}
-\pi_{\eta}^{\sharp}(\dif z)|_{z=0}&=\left(\partial_{\theta}+h\circ f\,\partial_{f}\right)\big|_{z=0}=\partial_{\theta}+\frac{h(r^2)}{2r^2}\, r\partial_{r}\\
&=\left(x\partial_y-y\partial_x \right)+u(x^2+y^2)\left(x\partial_x+y\partial_y \right),
\end{align*}
where $u(t):=h(t)/(2t)$ is smooth and vanishes flatly at $t=0$. The shape of the flow lines depends on the behaviour of $u$; for example, if $u(r^2)=0$, then the circle of radius $r$ is an orbit; if $u(t)<0$ for $t\in (0,r^2)$, then the flow lines in the disk of radius $r$ spiral towards the origin. 

It would be interesting to know if all deformations are of this type:
\begin{question}
Is every Poisson structure near $\pi$ isomorphic to $\pi_{\eta}$ for some $\eta\in C^{out}$?   
\end{question}
There are options in how to formulate this question precisely: for example, one can consider deformations on a small ball around $0$, or one can consider global Poisson structures which are close with respect to the Whitney (open-open) $C^{\infty}$-topology. A related problem is: 
\begin{question}
Is every Poisson structure with isotropy Lie algebra $\spl$ at a zero isomorphic to $\pi_{\eta}$, for some $\eta\in C^{out}$?   
\end{question}

Further, we note that different functions $\eta\in C^{out}$ can yield isomorphic Poisson structures $\pi_{\eta}$. Infinitesimally, this phenomenon arises because the Schouten-Nijenhuis bracket bracket in cohomology is non-trivial (see \eqref{Schouten}) in degrees $(1,2)\mapsto 2$, and this operation encodes the infinitesimal action of outer-automorphisms on deformations. In fact, only elements $\chi N\in \mathfrak{g}_{N}$, with $\chi\in C^{out}$ act non-trivially. Via the isomorphism \eqref{g_N}, this subalgebra corresponds to the following subalgebra of vector fields on $\Yr$:
\[\mathfrak{X}^1_{\to}(\R)=\big\{h\partial_t\ |\ h|_{(-\infty,0]}=0\big\}\subset \mathfrak{X}_0^1(\Yr),\]
with corresponding subgroup: 
\[\mathrm{Diff}_{\to}(\R)=\big\{\phi\in \mathrm{Diff}(\R)\ | \ \phi|_{(-\infty,0]}=\mathrm{id}\big\}\subset \mathrm{Diff}_{0}(\Yr),\]
where in both cases we use the diagonal inclusion. The action of $\widetilde{\phi}$, with $\phi\in \mathrm{Diff}_{\to}(\R)$, on $\pi_{\eta}$, with $\eta=h\circ f$, is given by: 
\[\widetilde{\phi}^*(\pi_{\eta})=  \pi_{\eta'}, \ \ \ \eta'=\frac{h\circ \phi}{\phi'}\circ f \in C^{out}.\]
Note also that $\tau$ acts non-trivially:
\[\tau^*(\pi_{\eta})=\pi_{-\eta}.\] 
It would be interesting to know whether these are all identifications: 
\begin{question}
For $i=1,2$, consider $h_i\in C^{\infty}(\R)$, with $\supp(h_i)\subset [0,\infty)$, and let $\eta_i:=h_i\circ f$. If the Poisson structures
$\pi_{\eta_1}$ and $\pi_{\eta_2}$ are isomorphic, does there exist $\phi\in \mathrm{Diff}_{\to}(\R)$ such that 
\[h_2=\frac{h_1\circ \phi}{\phi'}\ \ \ \textrm{or}\ \ \ h_2=-\frac{h_1\circ \phi}{\phi'}\ ?\]
\end{question}
These formulas come from the adjoint action of $\mathrm{Diff}_{\to}(\R)$. Namely, if $\phi\in \mathrm{Diff}_{\to}(\R)$ and $h\partial_t\in \mathfrak{X}_{\to}^1(\R)$, then 
\[\phi^*(h\partial_t)=\frac{h\circ \phi}{\phi'}\partial_t.\]
Thus, we obtain a bijection:
\[H^2(\mathfrak{sl}_2^*(\mathbb{R}),\pi)/G_N\simeq \big(\mathfrak{X}_{\to}^1(\R)/\mathrm{Diff}_{\to}(\R)\big)/\{\pm 1\},\]
where the right hand-side can be thought of as adjoint orbits of 
$\mathrm{Diff}_{\to}(\R)$, up to $\pm 1$. Assuming that the answers to the last two questions are positive, this space is a model for the Poisson-moduli space around $\pi$.

\subsection{The Koszul-Brylinski double complex}
Dual to the Poisson complex of a Poisson manifold $(M,\pi)$, 
Koszul \cite{Kos85} introduced a differential on differential forms:
\begin{align*}
    \delta_{\pi}:=\iota_{\pi}\circ \dif -\dif \circ \iota_{\pi}: \Omega^{\bullet}(M)\to \Omega^{\bullet-1}(M), \ \ \ \ \delta_{\pi}^2 =0,
\end{align*}
which yields the Poisson homology groups: $H_{\bullet}(M,\pi)$. Moreover, one has
\begin{align*}
    \dif\circ \delta_{\pi} +\delta_{\pi}\circ \dif =0,
\end{align*}
and therefore we have a bidifferential complex $(\Omega^{\bullet}(M),\dif,\delta_{\pi})$. In \cite{Bry88}, Brylinski gave a more explicit formula of $\delta_{\pi}$ and studied this complex in more detail. Moreover, by \cite{Xu99} and \cite{ELuWein}, for an oriented, unimodular Poisson manifold, the contraction with an $\mathfrak{ham}$-invariant volume form $\mu$ gives an isomorphism
between the Poisson cohomology complex and the Poisson homology complex:
\begin{align*}
\mu^{\flat}: (\mathfrak{X}^{k}(M),\dif_{\pi})\diffto (\Omega^{m-k}(M),\delta_{\pi}).
\end{align*}

For $(\mathfrak{sl}_2^*(\mathbb{R}) ,\pi)$ the standard volume form $\mu=\dif x\wedge \dif y \wedge \dif z$ is invariant. Applying contraction with $\mu$ on the representatives for Poisson cohomology from Theorem \ref{main theorem} we obtain:
\begin{align*}
H_3(\mathfrak{sl}_2^*(\mathbb{R}),\pi)\ \ &\simeq\ \  \Cas(\mathfrak{sl}_2^*(\mathbb{R}))\cdot \mu,\\
H_2(\mathfrak{sl}_2^*(\mathbb{R}),\pi)\ \ &\simeq\ \  C^{out}\cdot\dif f\wedge \dif \theta\ \oplus\  C^{flat}\cdot \overline{\omega},\\
H_1(\mathfrak{sl}_2^*(\mathbb{R}),\pi)\  \ &\simeq\  \  C^{out}\cdot \dif \theta,\\
H_0(\mathfrak{sl}_2^*(\mathbb{R}),\pi)\ \ &\simeq\  \ \R[[f]],
\end{align*}
where $\overline{\omega}:=\mu^{\flat}(N)$ is a closed extension to $\R^3\backslash Z$ of the leaf-wise symplectic form. Using this, we calculate the de Rham cohomology of the Poisson homology:
\begin{corollary}
We have that:
\[
H_{DR}^{k}(H_{\bullet}(\mathfrak{sl}_2^*(\mathbb{R}),\pi),\dif)\simeq
\begin{cases}
\R[[f]], & k=0, \ \textrm{or}\ \ k=3;\\
0, & k=1, \ \textrm{or}\ \ k=2.\\
\end{cases}\]
\end{corollary}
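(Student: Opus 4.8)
The plan is to feed the explicit Poisson homology representatives listed above into the induced de Rham differential and compute directly. Since $\dif\circ\delta_\pi+\delta_\pi\circ\dif=0$, the operator $\dif$ preserves $\delta_\pi$-cycles and $\delta_\pi$-boundaries, hence descends to $H_\bullet(\mathfrak{sl}_2^*(\mathbb{R}),\pi)$; as $\dif$ raises the form degree, we obtain the four-term cochain complex
\[\R[[f]]\xrightarrow{\dif} C^{out}\dif\theta\xrightarrow{\dif} C^{out}\dif f\wedge\dif\theta\oplus C^{flat}\overline\omega\xrightarrow{\dif}\Cas(\mathfrak{sl}_2^*(\mathbb{R}))\cdot\mu,\]
whose cohomology is the object to be computed. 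The crucial simplification is that every coefficient occurring is a Casimir, hence a function of $f$, so $\dif$ acts on it as $\partial_f(\cdot)\,\dif f$; together with $\dif\overline\omega=0$ (the form $\overline\omega=\mu^\flat(N)$ is closed on $\R^3\setminus Z$, and $\chi\in C^{flat}$ vanishes flatly on $Z$) and the coordinate expression $\mu|_O=\tfrac12\dif f\wedge\dif\theta\wedge\dif w$, this reduces the whole calculation to the homological algebra of the single derivation $\partial_f$ on $C^{out}$, $C^{flat}$ and $\Cas(\mathfrak{sl}_2^*(\mathbb{R}))$.

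Carrying this out, I would first record the three maps. On $H_1\to H_2$ one gets $\eta\,\dif\theta\mapsto\partial_f(\eta)\,\dif f\wedge\dif\theta$, landing directly in the normalized family $C^{out}\dif f\wedge\dif\theta$. On $H_2\to H_3$ the component $\eta\,\dif f\wedge\dif\theta$ is killed (because $\dif f\wedge\dif f=0$), while $\dif(\chi\,\overline\omega)=\partial_f(\chi)\,\dif f\wedge\overline\omega=\partial_f(\chi)\,\mu$, so the map is $(\eta,\chi)\mapsto\partial_f(\chi)$. The one map needing an argument is $H_0\to H_1$: representing a class of $H_0\cong\R[[f]]$ by a function $g(f)$ gives $\dif g=g'(f)\,\dif f$, and I must show this is zero in $H_1$. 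This is the only non-mechanical step: I would note $\dif f=2\,\iota_\pi\mu=2\,\mu^\flat(\pi)$ and that $\pi$ is $\dif_\pi$-exact, with primitive a multiple of the Euler vector field $\mathcal E=x\partial_x+y\partial_y+z\partial_z$ (since $\Lie_{\mathcal E}\pi=-\pi$). Because $\mu^\flat$ is a chain isomorphism intertwining $\dif_\pi$ and $\delta_\pi$, and $g'(f)$ is a Casimir, $g'(f)\,\dif f=2\,\mu^\flat\!\big(g'(f)\,\pi\big)$ is $\delta_\pi$-exact; hence $\dif\colon H_0\to H_1$ vanishes.

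It remains to compute kernels and cokernels of $\partial_f$. Using the identifications $C^{out}\cong C^\infty_{\to}(\Yr)$ and $C^{flat}\cong C^\infty_0(\Yr)$ from Subsection~\ref{sub: Casimirs}, I would check that $\partial_f$ is surjective on each, since the antiderivative $t\mapsto\int_0^t$ preserves both the support condition $\supp\subset[0,\infty)$ and flat vanishing at the origin, and that it has trivial kernel on each, since a constant that is either supported in $[0,\infty)$ or flat at $0$ must vanish. This yields at once $H_{DR}^1=0$ and $H_{DR}^2=0$. For the two ends, $H_{DR}^0=\ker(\dif\colon H_0\to H_1)=\R[[f]]$ because that map is zero, and $H_{DR}^3=\Cas(\mathfrak{sl}_2^*(\mathbb{R}))/\partial_f(C^{flat})=\Cas(\mathfrak{sl}_2^*(\mathbb{R}))/C^{flat}$ by the surjectivity just noted.

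The last identification $\Cas(\mathfrak{sl}_2^*(\mathbb{R}))/C^{flat}\cong\R[[f]]$ is the final point: on $C^\infty(\Yr)/C^\infty_0(\Yr)$, the assignment sending a pair $(h_1,h_2)$ to the common Taylor series of $h_1,h_2$ at the origin (they agree on $(0,\infty)$, hence so do their jets at $0$) is well defined, has kernel exactly $C^\infty_0(\Yr)$, and is onto by Borel's theorem. Assembling the four groups gives the claimed values. The main obstacle here is essentially bookkeeping: verifying that each $\dif$ of a representative lands in, or is homologous to, the stated normalized family, and disposing of the $H_0\to H_1$ map through exactness of $\pi$; once that is in place, the kernel/image computations for $\partial_f$ and the Borel identification are routine.
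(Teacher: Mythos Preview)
Your proof is correct and is precisely the computation the paper leaves to the reader (the corollary is stated immediately after the list of Poisson homology representatives with only the phrase ``Using this, we calculate'', and no argument is given). Your identification $\dif f\wedge\overline\omega=\mu$, the reduction of each map to $\partial_f$ acting on the appropriate space of Casimirs, and the Borel argument for $\Cas/C^{flat}\simeq\R[[f]]$ are all exactly what one needs; the one nontrivial step you single out, the vanishing of $\dif\colon H_0\to H_1$ via exactness of $g'(f)\pi=\dif_\pi(g'(f)\mathcal{E})$, is handled correctly.
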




\section{Formal Poisson cohomology of $\mathfrak{sl}_2^*(\mathbb{R})$}\label{section: formal}

We begin this section by introducing flat and formal Poisson cohomology. For the linear Poisson structure on the dual of a Lie algebra, we identify these cohomologies with the Chevalley-Eilenberg cohomology of the Lie algebra with coefficients in certain representations. Then we specialize to semi-simple Lie algebras, for which, using standard results from Lie theory, we calculate the formal Poisson cohomology (Proposition \ref{generators}); and explicitly, for $\spl$. An important consequence (Corollary \ref{ses smooth formal}) is that, for semi-simple Lie algebras, the calculation of Poisson cohomology can be reduced to that of flat Poisson cohomology. 

\subsection{Flat and formal Poisson cohomology}

Let $(M,\pi)$ be a Poisson manifold. Its Poisson cohomology $H^{\bullet}(M,\pi)$ is the cohomology of the chain complex: 
\[(\mathfrak{X}^{\bullet}(M),\dif_{\pi}:=[\pi,\cdot]).\]
For $p\in M$, let $\mathfrak{X}_p^{\bullet}(M)$ denote the set of multivector fields that are flat at $p$. Since $\mathfrak{X}_p^{\bullet}(M)$ is a Lie ideal in $\mathfrak{X}^{\bullet}(M)$, it is also a subcomplex with respect to $\dif_{\pi}$. The cohomology of this complex, denoted $H^{\bullet}_{p}(M,\pi)$, will be called the flat Poisson cohomology at $p$. 

By Borel's Lemma on the existence of smooth functions with a prescribed Taylor series, we have the following identification for the quotient:
\[\mathfrak{X}^{\bullet}(M)/\mathfrak{X}_p^{\bullet}(M)\simeq \wedge^{\bullet}T_pM\otimes \R[[T^*_pM]],\]
where $\R[[T^*_pM]]$ denotes the algebra of formal power series of functions at $p$. Thus, we obtain a short exact sequence of complexes
 \begin{align*}
  0\to(\mathfrak{X}^{\bullet}_p(M),\dif_{\pi})\to(\mathfrak{X}^{\bullet}(M),\dif_{\pi})\stackrel{j^{\infty}_p}{\longrightarrow} (\wedge^{\bullet}T_pM\otimes \R[[T^*_pM]],\dif_{j^{\infty}_{p}\pi})\to 0,
 \end{align*}
where $j^{\infty}_p$ is the infinite jet map. The cohomology of the quotient complex, denoted by $H^{\bullet}_{F,p}(M,\pi)$, will be called the formal Poisson cohomology at $p$. The short exact sequence induces a long exact sequence in cohomology: 
\begin{equation}\label{jet}
\ldots \stackrel{j^{\infty}_p}{\to} H^{q-1}_{F,p}(M,\pi)\stackrel{\partial}{\to} 
H^{q}_{p}(M,\pi)\to H^{q}(M,\pi)\stackrel{j^{\infty}_p}{\to}H^{q}_{F,p}(M,\pi )\stackrel{\partial}{\to}\ldots.
 \end{equation}

\subsection{Poisson cohomology of linear Poisson structures}

A Poisson structure on a vector space is called linear if the set of linear functions is closed under the Poisson bracket. Such Poisson structures are in one-to-one correspondence with Lie algebra structures on the dual vector space. Namely, let $(\mathfrak{g},[\cdot,\cdot])$ be a real, finite-dimensional Lie algebra. The associated linear Poisson structure $\pi$ on $\mathfrak{g}^*$ is determined by the condition that the map $l:\mathfrak{g}\to C^{\infty}(\mathfrak{g}^*)$, which identifies $\mathfrak{g}$ with $(\mathfrak{g}^*)^*$, 
is a Lie algebra homomorphism:
\[\{l_X,l_Y\}=l_{[X,Y]},\]
where $\{\cdot,\cdot\}$ is the Poisson bracket on $C^{\infty}(\mathfrak{g}^*)$ corresponding to $\pi$. In particular, $C^{\infty}(\mathfrak{g}^*)$ becomes a $\mathfrak{g}$-representation, with $X\cdot f:=\{l_X,f\}$. Moreover, the Poisson complex of $(\mathfrak{g}^*,\pi)$ is isomorphic to the Chevalley-Eilenberg complex of $\mathfrak{g}$ with coefficients in $C^{\infty}(\mathfrak{g}^*)$ \cite[Prop 7.14]{Laurent2013}
\begin{equation}\label{iso_complexes}
(\mathfrak{X}^{\bullet}(\mathfrak{g}^*),\dif_{\pi})\simeq (\wedge^{\bullet}\mathfrak{g}^*\otimes C^{\infty}(\mathfrak{g}^*),\dif_{EC}).
\end{equation}
This identification allows for the use of techniques from Lie theory in the calculation of Poisson cohomology, as we will do in the sequel. 

Suppose that $R$ is a representation of $\g$, and denote by $R^{\g}\subset R$ the set of $\g$-invariant elements. Since $R^{\g}$ is a trivial subrepresentation, we can identify \[H^q(\mathfrak{g})\otimes R^{\mathfrak{g}} \simeq  H^q(\mathfrak{g},R^{\mathfrak{g}}).\] 
Moreover, the inclusion $\iota:R^{\g}\hookrightarrow R$ induces a map in cohomology:
 \begin{align}\label{subrep}
  \iota_{*} : H^q(\mathfrak{g})\otimes R^{\mathfrak{g}} \to H^q(\mathfrak{g},R).
 \end{align}
For $q=0$ this map is always an isomorphism: $R^{\g}\simeq H^0(\g,R)$.
In general, $\iota_{*}$ need not be injective nor surjective. However, by \cite[Thm 13]{Hoch1953}, if $\g$ is semisimple and $R$ is finite-dimensional, then \eqref{subrep} is an isomorphism for all $q\geq 0$. The same conclusion holds also in the following more general situation, which we will use in the next subsection:
\begin{lemma}
\label{iso cohomology}
Let $\mathfrak{g}$ be a semisimple Lie algebra. Let $R=\prod_{\alpha\in  A}R_{\alpha}$ be a direct product of finite-dimensional $\g$-representations. Then the map in \eqref{subrep} is an isomorphism for all $q\geq 0$.
\end{lemma}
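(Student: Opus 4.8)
The plan is to reduce the statement to the finite-dimensional case, which is exactly \cite[Thm 13]{Hoch1953}, by exploiting the fact that every functor in sight commutes with the direct product $\prod_{\alpha\in A}$. The crucial structural observation is that $\mathfrak{g}$ is finite-dimensional, so that both the exterior algebra $\wedge^{\bullet}\mathfrak{g}^*$ and the cohomology $H^q(\mathfrak{g})=H^q(\mathfrak{g},\mathbb{R})$ are finite-dimensional vector spaces. This is what will allow me to pass tensor products through the infinite product, which in general is illegitimate.

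First I would analyze the target $H^q(\mathfrak{g},R)$. Since $\wedge^q\mathfrak{g}^*$ is finite-dimensional, tensoring with it commutes with the product, so the Chevalley--Eilenberg cochain groups split as $\wedge^q\mathfrak{g}^*\otimes\prod_{\alpha}R_{\alpha}\cong\prod_{\alpha}\big(\wedge^q\mathfrak{g}^*\otimes R_{\alpha}\big)$, and the differential $\dif_{EC}$ acts componentwise. Thus the cochain complex computing $H^{\bullet}(\mathfrak{g},R)$ is literally the direct product of the complexes computing the $H^{\bullet}(\mathfrak{g},R_{\alpha})$. Because the direct product is an exact functor on vector spaces, kernels, images and quotients of componentwise maps are the corresponding products, and hence cohomology commutes with the product. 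This gives a natural isomorphism
\[
H^q(\mathfrak{g},R)\;\cong\;\prod_{\alpha\in A}H^q(\mathfrak{g},R_{\alpha}).
\]

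Next I would treat the source $H^q(\mathfrak{g})\otimes R^{\mathfrak{g}}$. Invariants commute with products, so $R^{\mathfrak{g}}=\big(\prod_{\alpha}R_{\alpha}\big)^{\mathfrak{g}}\cong\prod_{\alpha}R_{\alpha}^{\mathfrak{g}}$; and since $H^q(\mathfrak{g})$ is finite-dimensional, tensoring with it again commutes with the product, giving $H^q(\mathfrak{g})\otimes R^{\mathfrak{g}}\cong\prod_{\alpha}\big(H^q(\mathfrak{g})\otimes R_{\alpha}^{\mathfrak{g}}\big)$. The map $\iota_*$ of \eqref{subrep} is induced by the inclusion $R^{\mathfrak{g}}\hookrightarrow R$, which is the product of the inclusions $R_{\alpha}^{\mathfrak{g}}\hookrightarrow R_{\alpha}$, so under the two identifications above $\iota_*$ becomes the product $\prod_{\alpha}\iota_{*,\alpha}$ of the corresponding maps for each finite-dimensional representation $R_{\alpha}$. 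By \cite[Thm 13]{Hoch1953} each $\iota_{*,\alpha}$ is an isomorphism, and a product of isomorphisms is an isomorphism, which concludes the argument.

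The only delicate point, and the one I would state carefully, is the commutation of $\otimes$ with the infinite product: this fails for general vector spaces and is precisely where finite-dimensionality of $\mathfrak{g}$ (hence of $\wedge^{\bullet}\mathfrak{g}^*$ and of $H^{\bullet}(\mathfrak{g})$) is used. Everything else—exactness of products on vector spaces and naturality of $\iota_*$—is formal, so I do not expect a genuine obstacle beyond bookkeeping the identifications so that $\iota_*$ is verifiably the product map factor by factor.
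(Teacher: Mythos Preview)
Your proposal is correct and follows essentially the same approach as the paper's proof: both use finite-dimensionality of $\mathfrak{g}$ to identify the Chevalley--Eilenberg complex of $R$ with the product of the complexes for the $R_{\alpha}$, pass to cohomology componentwise, and then invoke \cite[Thm~13]{Hoch1953} on each factor. Your version is slightly more explicit about why tensor products commute with the infinite product and why cohomology does (exactness of products), but the argument is the same.
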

\begin{proof}
Since $\g$ is finite-dimensional, the Chevalley-Eilenberg complex of $R$ is canonically isomorphic to the direct product of complexes: 
\begin{equation}\label{eq:prod_complexes}
(\wedge^{\bullet}\g\otimes R, \dif_{EC})\simeq \prod_{\alpha\in A}(\wedge^{\bullet}\g\otimes R_{\alpha}, \dif_{EC}),    
\end{equation}
which yields an isomorphism in cohomology $H^{\bullet}(\g, R)\simeq \prod_{\alpha\in A}H^{\bullet}(\g, R_{\alpha})$. Moreover, under the isomorphism \eqref{eq:prod_complexes}, the subcomplexes of invariant elements are in one-to-one correspondence: 
\[(\wedge^{\bullet}\g\otimes R^{\g}, \dif_{EC})\simeq \prod_{\alpha\in A}(\wedge^{\bullet}\g\otimes R^{\g}_{\alpha}, \dif_{EC}),\]
and therefore $H^{\bullet}(g)\otimes R^{g}\simeq \prod_{\alpha\in A}H^{\bullet}(\g)\otimes R^{\g}_{\alpha}$. This identifies the map $\iota_*$ for $R$ with the direct product of the maps $\iota_{\alpha *}$ for $R_{\alpha}$:
\[\iota_*\simeq \prod_{\alpha\in A}\iota_{\alpha *}: \prod_{\alpha\in A}H^{\bullet}(\g)\otimes R^{\g}_{\alpha}\to \prod_{\alpha\in A}H^{\bullet}(\g, R_{\alpha}).\]
Since $\g$ is semisimple and all $R_{\alpha}$'s are finite-dimensional, 
each $\iota_{\alpha *}$ is an isomorphism \cite[Thm 13]{Hoch1953}, and therefore so is their product $\iota_*$.
 \end{proof}

We are interested in the representation $R=C^{\infty}(\g^*)$, whose space of invariants are the Casimir functions:
\begin{align*}
H^0(\g^*,\pi)=\Cas(\g^*)=C^{\infty}(\g^*)^{\g}.
\end{align*}
If $\g$ is semisimple, then the map \eqref{subrep} for $R=C^{\infty}(\g^*)$ is an isomorphism for all $q\geq 0$ if and only if the Lie algebra is compact. Namely, by the construction in \cite{Wein87}, 
for all non-compact semisimple Lie algebra it fails at $q=1$. For compact Lie algebras, this was proven in \cite[Thm 3.2]{GW92}, and so, by \eqref{iso_complexes}, we have that:
\[H^{q}(\g^*,\pi)=H^{q}(\g,C^{\infty}(\g^*))\simeq  H^q(\g)\otimes\Cas(\g^*).\]

\subsection{Formal Poisson cohomology of linear Poisson structures}

Under the isomorphism \eqref{iso_complexes}, the subcomplex of multivector fields on $\g^*$ that are flat at $0$ corresponds to the Eilenberg-Chevalley complex of $\g$ with coefficients in the subrepresentation $C_0^{\infty}(\g^*)\subset C^{\infty}(\g^*)$ consisting of smooth functions that are flat at zero:
\[\mathfrak{X}^{\bullet}_0(\mathfrak{g}^*)\simeq \wedge^{\bullet}\mathfrak{g}^*\otimes C^{\infty}_0(\mathfrak{g}^*).\]
Therefore, the quotient complex is naturally identified with 
\[\wedge^{\bullet} \g^*\otimes \R[[\g]],\]
where $\R[[\g]]=C^{\infty}(\mathfrak{g}^*)/C^{\infty}_0(\mathfrak{g}^*)$ is the ring of formal power series of functions on $\g^*$. Thus, the formal Poisson cohomology at $0\in \g^*$, which for simplicity we denote by $H^{\bullet}_{F}(\g^* ,\pi )$, is naturally isomorphic to the cohomology of $\g$ with coefficients in the representation $\R[[\g]]$
\[H^{\bullet}_{F}(\g^*)\simeq H^{\bullet}(\g,\R[[\g]]).\]
As a representation, $\R[[\g]]$ is isomorphic to the product of the symmetric powers of the adjoint representation: 
  \[\R[[\g]]\simeq \prod_{k\geq 0} S^k(\g).\]
Thus Lemma \ref{iso cohomology} implies: 
\begin{proposition}\label{formal cohomology}
For the formal Poisson cohomology at $0$ of a semisimple Lie algebra $\g$ we have that: 
 \begin{align*}
  H^{\bullet}_{F}(\g^*,\pi)&\simeq H^{\bullet}(\g)\otimes \Cas _{F}(\g^*),
 \end{align*}
 where $\Cas_{F}(\g^*)=\R[[\g]]^{\g}$ is the 
 set of formal Casimir functions.
 \end{proposition}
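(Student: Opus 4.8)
The plan is to apply Lemma \ref{iso cohomology} directly, so the only real work is to verify that the representation $\R[[\g]]$ fits its hypotheses and to unwind the identifications so that the abstract conclusion $\iota_*$ is an isomorphism becomes the stated formula. First I would recall from the previous subsection the isomorphism $\R[[\g]]\simeq \prod_{k\geq 0}S^k(\g)$ of $\g$-representations: the grading of the polynomial/formal-power-series algebra by total degree is preserved by the adjoint action, and the degree-$k$ piece is exactly the $k$-th symmetric power $S^k(\g)$ of the adjoint representation. This already exhibits $\R[[\g]]$ as a \emph{direct product} (not merely a direct sum, since formal power series allow infinitely many nonzero homogeneous components) of the finite-dimensional representations $R_k:=S^k(\g)$.

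With $R=\R[[\g]]=\prod_{k\geq 0}S^k(\g)$ and $\g$ semisimple, Lemma \ref{iso cohomology} applies verbatim and tells me that the natural map
\[
\iota_*:H^{\bullet}(\g)\otimes \R[[\g]]^{\g}\to H^{\bullet}(\g,\R[[\g]])
\]
is an isomorphism in every degree $q\geq 0$. The last step is purely a matter of translating notation: by definition $\Cas_F(\g^*)=\R[[\g]]^{\g}$ is the space of invariants, i.e.\ the formal Casimir functions; and by the discussion preceding the proposition, $H^{\bullet}(\g,\R[[\g]])$ is precisely the formal Poisson cohomology $H^{\bullet}_F(\g^*,\pi)$ via the Chevalley-Eilenberg identification \eqref{iso_complexes} restricted to the quotient complex. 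Substituting these gives exactly the claimed formula $H^{\bullet}_F(\g^*,\pi)\simeq H^{\bullet}(\g)\otimes \Cas_F(\g^*)$.

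I expect no genuine obstacle here, since the heavy lifting is already packaged into Lemma \ref{iso cohomology} (which in turn rests on the Hochschild-Serre result \cite[Thm 13]{Hoch1953} that $\iota_*$ is an isomorphism for finite-dimensional coefficients over a semisimple Lie algebra). The one point deserving a sentence of care is \emph{why a product rather than a sum}: one must note that the Chevalley-Eilenberg functor commutes with direct products of coefficient modules because $\wedge^{\bullet}\g$ is finite-dimensional, so that $\Hom$ and $\wedge^{\bullet}\g^*\otimes(-)$ distribute over the product --- this is exactly the content of \eqref{eq:prod_complexes} inside the proof of Lemma \ref{iso cohomology}, and it is what allows the cohomology of the formal (infinite-product) representation to be computed componentwise. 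Beyond flagging that subtlety, the proof is essentially a one-line invocation of the lemma followed by a change of names, and I would write it as such.
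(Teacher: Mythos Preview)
Your proposal is correct and matches the paper's own argument essentially line for line: the paper simply notes that $\R[[\g]]\simeq \prod_{k\geq 0}S^k(\g)$ as $\g$-representations and then invokes Lemma~\ref{iso cohomology}. Your additional remarks about why the product (rather than sum) is needed and how the notation unwinds are accurate elaborations of points the paper leaves implicit.
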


On the other hand, the space of formal Casimir functions is well-understood: 

\begin{proposition}\label{generators}
Let $(\g^* ,\pi ) $ be the linear Poisson structure associated to a semisimple Lie algebra $\g$. Then there exist $n=\dim \g - \max \mathrm{rank}(\pi)$ algebraically independent homogeneous polynomials $f_1 ,\dots , f_n$ such that
\begin{align*}
\Cas_F(\g^*)=\R[[f_1 ,\dots ,f_n ]] \subset \R[[\g]],
\end{align*}
where $\R[[f_1,\dots ,f_n]]$ denotes formal power series in the polynomials $f_i$.
\end{proposition}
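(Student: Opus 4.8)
The plan is to read off $\Cas_F(\g^*)$ as a ring of invariants and then appeal to Chevalley's theorem, tracking carefully the passage from the polynomial to the formal level. Recall from Proposition~\ref{formal cohomology} that $\Cas_F(\g^*)=\R[[\g]]^{\g}$, and that as a $\g$-representation $\R[[\g]]\simeq \prod_{k\geq 0}S^k(\g)$ is the degreewise completion of the algebra $S(\g)=\bigoplus_k S^k(\g)$ of polynomial functions on $\g^*$. Since the coadjoint $\g$-action preserves each finite-dimensional homogeneous component $S^k(\g)$, taking invariants commutes with the product, so that
\[ \Cas_F(\g^*)=\prod_{k\geq 0}\big(S^k(\g)\big)^{\g}. \]
First I would note that on each $S^k(\g)$ the infinitesimal action integrates to the polynomial action of the connected adjoint group $G$ of $\g$, so $\g$-invariance coincides with $G$-invariance; hence $\bigoplus_k (S^k\g)^{\g}=S(\g)^{\g}=S(\g)^{G}$ is the classical graded ring of invariant polynomials.

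Next I would invoke Chevalley's restriction theorem together with Kostant's freeness result: for a semisimple Lie algebra the ring of invariant polynomials is a free graded-polynomial algebra $\R[f_1,\dots,f_n]$ on $n=\rk(\g)$ algebraically independent homogeneous generators, each of degree $\geq 2$. For the real form $\g$ this follows by complexification. Invariance under $\g_{\C}=\g\otimes\C$ is equivalent to invariance under $\g$, so $S(\g)^{\g}\otimes_{\R}\C=S(\g_{\C})^{\g_{\C}}$, which is polynomial by Chevalley; and since the fundamental invariants (e.g.\ the coefficients of the characteristic polynomial of $\mathrm{ad}$, or power sums $X\mapsto \mathrm{tr}\,\mathrm{ad}(X)^k$, transported to $\g^*$ via the Killing form) have real coefficients, they already generate $S(\g)^{\g}$ over $\R$ and remain algebraically independent there. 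It then remains to match the count: the maximal rank of $\pi$ equals the dimension of a generic coadjoint orbit, which for a semisimple Lie algebra is $\dim\g-\rk(\g)$; therefore $n=\rk(\g)=\dim\g-\max\mathrm{rank}(\pi)$, as claimed.

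Finally I would carry the grading through the completion. Because each $f_i$ is homogeneous of degree $d_i>0$, the monomials $f_1^{a_1}\cdots f_n^{a_n}$ with $\sum_i a_i d_i=k$ form a basis of $(S^k\g)^{\g}$, so a general element of $\prod_k (S^k\g)^{\g}$ is precisely a choice of real coefficient for each such monomial across all $k$, i.e.\ a formal power series in $f_1,\dots,f_n$. This identifies $\Cas_F(\g^*)=\prod_k (S^k\g)^{\g}$ with $\R[[f_1,\dots,f_n]]$. The step I would treat most carefully is exactly this completion argument: it rests on the generators being \emph{homogeneous} of strictly positive degree with respect to the $\g$-equivariant grading by polynomial degree, which is what guarantees that every formal power series in the $f_i$ has a well-defined finite homogeneous component in each degree and that, conversely, every element of the direct product is such a series. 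In other words, the passage from the direct sum $S(\g)^{\g}$ to the direct product $\R[[\g]]^{\g}$ corresponds exactly to the passage from polynomials to formal power series in the $f_i$; the genuine mathematical content is supplied by Chevalley--Kostant, and the remaining work is to verify that freeness and positive homogeneity survive this completion.
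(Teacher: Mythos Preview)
Your proof is correct and follows essentially the same approach as the paper: both invoke the classical theorem that $S(\g)^{\g}$ is freely generated by homogeneous polynomials (the paper cites Dixmier, you spell out Chevalley--Kostant and the descent to the real form), and then pass to the formal completion using that the generators are homogeneous of positive degree. Your treatment is somewhat more explicit about the real-vs-complex issue and about matching $n=\rk(\g)$ with $\dim\g-\max\mathrm{rank}(\pi)$, while the paper's completion step is phrased via the lower bound $k/D$ on the multidegree of monomials appearing in degree $k$; these are equivalent packagings of the same observation.
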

\begin{proof}
 \cite[Thm 7.3.8]{Dixmier96} gives such polynomials $f_1 ,\dots ,f_n$ which generate the algebra of $\g$-invariant polynomials $\R[\g]^{\g }$. Clearly $\R[[f_1 ,\dots ,f_n ]]\subset \Cas_F(\g^*)$. For the other inclusion, let $g\in \Cas_{F}(\g^*)$. Let $g_k\in S^k(\g)$ denote the homogeneous component of degree $k$ of $g$. Since $g_k\in S(\g)^{\g}$ and the $f_i$'s are algebraically independent, there is a unique polynomial $p_k\in \R[x_1,\ldots,x_n]$ such that $g_k=p_k(f_1,\ldots,f_n)$. Note that each monomial of $p_k$ has total degree at least $k/D$, where $D:=\mathrm{max}\ \mathrm{degree}(f_i)$. Therefore, $p=\sum_{k\geq 0}p_k$ represents an element in $\R[[x_1,\ldots,x_n]]$, which satisfies $g=p(f_1,\ldots,f_n)$.
  \end{proof}
  
The invariant polynomials on $\mathfrak{sl}_2^*(\mathbb{R})$ are generated by the function $f$ from \eqref{Casimir}. We conclude:

\begin{corollary}\label{formal cohomology sl2} 
The formal Poisson cohomology of $\mathfrak{sl}_2^*(\mathbb{R})$ at 0 is given by
\[H^0_F(\mathfrak{sl}_2^*(\mathbb{R}),\pi)=\R[[f]], \ \ H^1_F(\mathfrak{sl}_2^*(\mathbb{R}),\pi)=0,\]
\[H^2_F(\mathfrak{sl}_2^*(\mathbb{R}),\pi)=0, \ \ H^3_F(\mathfrak{sl}_2^*(\mathbb{R}),\pi)=\R[[f]] \otimes \partial_x\wedge\partial_y\wedge \partial_z.\]
 \end{corollary}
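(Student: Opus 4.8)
The plan is to combine Proposition \ref{formal cohomology} with Corollary \ref{formal cohomology sl2}'s prerequisites, specializing the general semisimple computation to $\g = \spl$. By Proposition \ref{formal cohomology}, the formal Poisson cohomology factors as
\[
H^{\bullet}_{F}(\mathfrak{sl}_2^*(\mathbb{R}),\pi) \simeq H^{\bullet}(\spl)\otimes \Cas_F(\mathfrak{sl}_2^*(\mathbb{R})),
\]
so the entire statement reduces to two separate inputs: the Chevalley-Eilenberg cohomology of the Lie algebra $\spl$ with trivial coefficients, and the space of formal Casimir functions. The second factor is already settled: Proposition \ref{generators}, together with the observation that the invariant polynomials on $\mathfrak{sl}_2^*(\mathbb{R})$ are generated by the single quadratic $f$ from \eqref{Casimir} (here $n = \dim\spl - \max\mathrm{rank}(\pi) = 3 - 2 = 1$), gives $\Cas_F(\mathfrak{sl}_2^*(\mathbb{R})) = \R[[f]]$.

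The remaining ingredient is the trivial-coefficient cohomology $H^{\bullet}(\spl)$. First I would invoke the standard fact that for a semisimple Lie algebra the Lie algebra cohomology with trivial real coefficients is an exterior algebra on primitive generators in odd degrees, and that by Whitehead's lemmas $H^1(\spl) = 0$ and $H^2(\spl) = 0$. For the three-dimensional simple Lie algebra $\spl$ one has $H^0(\spl) = \R$ in degree $0$ and a single generator in top degree $3$ (the class of the Cartan-Killing volume), with $H^3(\spl) = \R$; concretely the generator can be taken as $\partial_x\wedge\partial_y\wedge\partial_z$ under the identification \eqref{iso_complexes}, which is $\dif_{EC}$-closed and not exact. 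Tensoring each of these four groups with $\R[[f]]$ yields exactly the four asserted formulas: degrees $1$ and $2$ vanish because $H^1(\spl) = H^2(\spl) = 0$, while degrees $0$ and $3$ each contribute a copy of $\R[[f]]$, the latter carrying the volume factor $\partial_x\wedge\partial_y\wedge\partial_z$.

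The step I expect to require the most care is pinning down the explicit representative in top degree and verifying that it is genuinely the correct generator of $H^3(\spl)\otimes\R[[f]]$ under the identification \eqref{iso_complexes} rather than merely abstractly isomorphic to $\R[[f]]$. Here I would check directly that $\dif_{\pi}(\partial_x\wedge\partial_y\wedge\partial_z) = 0$ for dimensional reasons (there are no nonzero $4$-vector fields on $\R^3$) and that $f^k\,\partial_x\wedge\partial_y\wedge\partial_z$ are linearly independent in cohomology; the latter follows because $\partial_x\wedge\partial_y\wedge\partial_z$ spans the one-dimensional $H^3(\spl)$ and $\R[[f]]$ injects as the formal Casimirs. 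Since these are all essentially bookkeeping consequences of the two propositions, the corollary follows without further analytic input; the only genuinely substantive facts are the vanishing $H^1(\spl) = H^2(\spl) = 0$ and the one-dimensionality of $H^3(\spl)$, both classical.
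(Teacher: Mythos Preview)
Your proposal is correct and follows essentially the same approach as the paper: both reduce via Proposition~\ref{formal cohomology} and Proposition~\ref{generators} to $H^{\bullet}(\spl)\otimes\R[[f]]$, then invoke Whitehead's lemmas for degrees $1$ and $2$ and identify the top-degree generator as $\partial_x\wedge\partial_y\wedge\partial_z$. Your additional remarks on verifying the explicit degree-$3$ representative are a bit more detailed than the paper's ``it is easy to see,'' but the logic is the same.
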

\begin{proof}
By the previous propositions $H^{\bullet}_F(\mathfrak{sl}_2^*(\mathbb{R}),\pi)= H^{\bullet}(\spl)\otimes \R[[f]]$. Clearly $H^{0}(\spl)=\R$, and it is easy to see that $H^{3}(\spl)=\R \partial_x\wedge\partial_y\wedge \partial_z$. In degrees 1 and 2, the conclusion follows by the Whitehead lemma, which states that for a semisimple Lie algebra $\g$, $H^1(\g)=0$ and $H^2(\g)=0$. 
\end{proof}

The previous propositions reduce the calculation of Poisson cohomology of a semisimple Lie algebra to that of flat Poisson cohomology at 0:
\begin{corollary}\label{ses smooth formal}
For a semi-simple Lie algebra $\g$, the Poisson cohomology of $(\g^*,\pi)$ fits into the short exact sequence 
\[0\to H^{\bullet}_0(\g^*,\pi)\to H^{\bullet}(\g^*,\pi)\stackrel{j^{\infty}_0}{\to} H^{\bullet}_F(\g^*,\pi)\to 0.\]
\end{corollary}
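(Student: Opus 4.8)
The statement is equivalent to showing that the connecting homomorphisms in the long exact sequence \eqref{jet} (taken with $M=\g^*$ and $p=0$) vanish, or equivalently that the jet map
\[j^{\infty}_0:H^q(\g^*,\pi)\to H^q_F(\g^*,\pi)\]
is surjective for every $q\geq 0$. Indeed, by exactness, surjectivity of $j^\infty_0$ in degree $q$ forces the next connecting map $\partial:H^q_F(\g^*,\pi)\to H^{q+1}_0(\g^*,\pi)$ to be zero; once all connecting maps vanish the long exact sequence breaks into the asserted short exact sequences, the injectivity of $H^\bullet_0(\g^*,\pi)\to H^\bullet(\g^*,\pi)$ being precisely the statement that the incoming $\partial$ is zero. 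So the whole corollary reduces to constructing, for each formal Poisson cohomology class, a \emph{smooth} Poisson cocycle with the prescribed infinite jet at $0$.

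To organise these lifts I would use the explicit description of the formal cohomology. By Proposition \ref{formal cohomology} and Proposition \ref{generators},
\[H^\bullet_F(\g^*,\pi)\simeq H^\bullet(\g)\otimes\Cas_F(\g^*),\qquad \Cas_F(\g^*)=\R[[f_1,\dots,f_n]],\]
so every formal class is a finite sum of terms of the form $[\alpha]\otimes p(f_1,\dots,f_n)$, with $[\alpha]\in H^\bullet(\g)$ and $p$ a formal power series in the homogeneous polynomial Casimirs $f_i$. By linearity it suffices to lift each such term. Since $\g$ is semisimple, $H^\bullet(\g)$ is represented by invariant cochains $\alpha\in(\wedge^\bullet\g^*)^{\g}$, and under the isomorphism \eqref{iso_complexes} the element $\alpha\otimes 1$ is a constant-coefficient multivector field $\widetilde\alpha$ on $\g^*$ with $\dif_\pi\widetilde\alpha=0$, the vanishing coming from the fact that the coefficient $1$ is $\g$-invariant and $\alpha$ is closed in the trivial-coefficient complex.

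The remaining ingredient is to lift the formal Casimir $p(f_1,\dots,f_n)$ to an honest smooth Casimir, and this is the crux of the argument and the only place where more than formal manipulation is needed. Rather than apply Borel's lemma directly on $\g^*$, which would only produce a function with the correct jet but no control over the Casimir condition, I would apply Borel's lemma in the $n$ auxiliary variables to obtain $h\in C^\infty(\R^n)$ with $j^\infty_0 h=p$, and then set $g:=h(f_1,\dots,f_n)\in C^\infty(\g^*)$. Because each $f_i$ is a genuine polynomial Casimir, the chain rule gives $X\cdot g=\sum_i(\partial_i h)(f_1,\dots,f_n)\,(X\cdot f_i)=0$ for every $X$, so $g$ is a smooth Casimir; and since the $f_i$ are homogeneous of positive degree, hence vanish at $0$, the jet of the composition is $j^\infty_0 g=p(f_1,\dots,f_n)$ as a formal power series. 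Finally, the product $g\,\widetilde\alpha$ is $\dif_\pi$-closed: by the Leibniz rule for the Schouten bracket $\dif_\pi(g\,\widetilde\alpha)=[\pi,g]\wedge\widetilde\alpha\pm g\,\dif_\pi\widetilde\alpha$, and both terms vanish because $[\pi,g]=X_g=0$ (as $g$ is a Casimir) and $\widetilde\alpha$ is a cocycle. Its infinite jet at $0$ is exactly $[\alpha]\otimes p(f_1,\dots,f_n)$, which establishes surjectivity of $j^\infty_0$ and hence the corollary.

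The main obstacle is precisely this lifting step: the naive Borel construction on $\g^*$ destroys the Casimir property, so the argument genuinely relies on Proposition \ref{generators}, which guarantees that formal Casimirs are power series in \emph{finitely many polynomial} Casimirs; this is what allows Borel's lemma to be applied only in the $n$ variables $x_1,\dots,x_n$ and then composed with the actual invariants $f_i$ to preserve both smoothness and invariance.
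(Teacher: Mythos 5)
Your proof is correct and follows essentially the same route as the paper: reduce the corollary to surjectivity of $j^{\infty}_0$ via the long exact sequence \eqref{jet}, write formal classes as sums of closed Lie algebra cochains tensored with formal Casimirs using Propositions \ref{formal cohomology} and \ref{generators}, and lift by applying Borel's lemma in the $n$ auxiliary variables and composing with the polynomial Casimirs $f_1,\dots,f_n$. The only differences are expository: you spell out why surjectivity kills the connecting maps and verify closedness of the lift via the Leibniz rule, both of which the paper leaves implicit.
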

\begin{proof}
Using the long exact sequence \eqref{jet}, it suffices to show that $j^{\infty}_0$ is surjective in cohomology. By Proposition \ref{formal cohomology}, every element in $H^{\bullet}_F(\g^*,\pi)$ has a representative of the form $w=\sum_i \omega_i\otimes g_i$, where $\omega_i\in \wedge^{\bullet}\g$ are closed elements, and $g_i\in \Cas_F(\g^*)$. By Proposition \ref{generators} we can write $g_i=h_i(f_1,\dots ,f_n)$, for some $h_i\in \R[[x_1,\dots ,x_n ]]$. By Borel's lemma, there are smooth functions $\overline{h}_i\in C^{\infty}(\R^n)$ such that $j^{\infty}_0 \overline{h}_i=h_i$. Therefore \[\overline{w}=\sum_i \omega_i\otimes \overline{h}_i(f_1,\dots ,f_n)\in \wedge^{\bullet}\g\otimes C^{\infty}(\g^*)\simeq \mathfrak{X}^{\bullet}(\g^*)\]
is a closed element satisfying $j^{\infty}_0 \overline{w}=w$.
\end{proof}
\begin{remark}
The versions of Propositions \ref{formal cohomology} and \ref{generators} with polynomials instead of formal power series are also valid (see \cite{Laurent2013} and \cite[Thm 7.3.8]{Dixmier96}). We expect these results to hold also for the algebra of analytic functions $C^{\omega}(\g^*)$, with a possible proof using the techniques from \cite{Conn84}.
 \end{remark}
 
\section{Flat Poisson cohomology of $\mathfrak{sl}_2^*(\mathbb{R})$}\label{section: flat PC}

The Poisson manifold $\mathfrak{sl}_2^*(\mathbb{R})\backslash\{0\}$ is regular, of corank one, and unimodular. Such Poisson structures can be described in terms of foliated cohomology \cite{Vaisman,Gammella}. We will explain this in the first two subsections. In the third subsection, we introduce the flat foliated cohomology of $\mathfrak{sl}_2^*(\mathbb{R})$, which we use in the last subsection to calculate the flat Poisson cohomology. This, together with Corollaries \ref{formal cohomology sl2} and \ref{ses smooth formal}, complete the description of the Poisson cohomology $\mathfrak{sl}_2^*(\mathbb{R})$ from Theorem \ref{main theorem}. The calculation of the flat foliated cohomology will be left for Sections \ref{section flat foli} and \ref{section: analysis}, and is the most technically involved part of the paper.

\subsection{Foliated cohomology}

We will follow \cite[Chp 1]{Torres15}. 
For a regular foliation $\mathcal{F}\subset TM$ on a manifold $M$, we denote the complex of foliated forms by 
\[(\Omega^{\bullet}(\mathcal{F}), \dif_{\mathcal{F}});\]
i.e.\ $\Omega^{\bullet}(\mathcal{F}):=\Gamma(\wedge^{\bullet} \mathcal{F}^*)$ consists of smooth families of differential forms on the leaves of $\mathcal{F}$, and $\dif_{\mathcal{F}}$ is the leafwise de Rham differential. The resulting cohomology is called the foliated cohomology of $\F$, and is denoted by $H^{\bullet}(\F)$. The normal bundle to $\F$, denoted by $\nu:=TM/\F$, carries the Bott-connection
\[   \nabla:\Gamma(\F)\times \Gamma(\nu)\to \Gamma(\nu),\ \ \ \  \nabla_X(\overline{V}):=\overline{[X,V]},
 \]
which, via the usual Koszul-type formula, induces a differential on $\nu$-valued forms $(\Omega^{\bullet}(\F,\nu),\dif_{\nabla})$, which yields the cohomology of $\F$ with values in $\nu$, denoted $H^{\bullet}(\F,\nu)$. Similarly, the dual connection on $\nu^*$ gives rise to the complex $(\Omega^{\bullet}(\F,\nu^*),\dif_{\nabla})$, with cohomology groups $H^{\bullet}(\F,\nu^*)$. 

Assume now that $\F$ has codimension one. Then we have the short exact sequence of complexes: 
\begin{equation}\label{ses fol-deRham}
0\to (\Omega^{\bullet-1}(\F,\nu^*),\dif_{\nabla})\to (\Omega^{\bullet}(M),\dif)\stackrel{r}{\to}(\Omega^{\bullet}(\F),\dif_{\F})\to 0,    
\end{equation}
where $r$ is the restriction map, and an element $\eta$ in the kernel of $r$ is canonically identified with the element $u(\eta)\in \Omega^{\bullet-1}(\F,\nu^*)$, defined by: 
\[\langle u(\eta),\overline{V}\rangle= r(i_{V}\eta), \ \ \overline{V}\in \nu.\]
Assume in addition that $\nu^*$ is orientable, and let $\varphi\in \Gamma(\nu^*)\subset \Omega^1(M)$ be a defining 1-form for $\F$, i.e.\ $\varphi$ is nowhere zero and $\F=\ker (\varphi)$. Then $\ker(r)$ is the differential ideal generated by $\varphi$
\begin{equation}\label{the complex}
(\varphi\wedge\Omega^{\bullet-1}(M),\dif)
\end{equation}
The foliation is called unimodular, if there exists a defining one-form $\varphi$ which is closed: $\dif \varphi=0$. Such a one-form is parallel for the dual of the Bott-connection, and it induces an isomorphism of complexes: 
\[(\Omega^{\bullet}(\F),\dif_{\F})\diffto (\Omega^{\bullet}(\F,\nu^*),\dif_{\nabla}), \ \ \eta\mapsto \varphi \otimes  \eta.\]
Similarly, the dual of $\varphi$ gives a parallel section of $\nu$, and we obtain an isomorphism of complexes:
\[(\Omega^{\bullet}(\F,\nu),\dif_{\nabla})\diffto (\Omega^{\bullet}(\F),\dif_{\F}), \ \ \eta\mapsto \langle \varphi, \eta\rangle.\]

\subsection{Cohomology of codimension one symplectic foliations}

Let $(M,\pi)$ be a regular Poisson manifold of corank one, and denote its symplectic foliation by $(\mathcal{F},\omega)$, where $\F=\pi^{\sharp}(T^*M)$ and $\omega\in \Omega^2(\F)$ is the leafwise symplectic structure. The Poisson complex of $\pi$ fits into a short exact sequence:  
 \begin{align}\label{ses poisson}
  0\to (\Omega^{\bullet}(\mathcal{F}),\dif_{\mathcal{F}})\xrightarrow{j} (\mathfrak{X}^{\bullet}(M),\dif_{\pi})\xrightarrow{p} (\Omega^{\bullet-1}(\mathcal{F},\nu),\dif_{\nabla})\to 0.
 \end{align}
Regarding the Poisson complex as the de Rham complex of the Lie algebroid $T^*M$, the map $j$ is obtained by pulling back Lie algebroid forms via the Lie algebroid map $\pi^{\sharp}:T^*M\to \F$; explicitly, 
\[j(\eta)=(-\pi^{\sharp})(\eta),\]
where we denoted by 
\[(-\pi^{\sharp}):\wedge^{\bullet} \F^*\diffto \wedge^{\bullet}\F\]
the isomorphism induced by $-\pi^{\sharp}$. For the cokernel, we have the canonical isomorphism $\wedge^{\bullet} TM/\wedge^{\bullet} \F\simeq  \nu\otimes \wedge^{\bullet-1}\F$; and the map $p$ is obtained by using the isomorphism
\[(-\omega_b)=(-\pi^{\sharp})^{-1}:\wedge^{\bullet} \F \diffto \wedge^{\bullet} \F^*.\] 
Explicitly, 
\[ \langle \alpha, p(w)\rangle:=(-\omega_b) (i_{\alpha}w), \ \ \alpha\in \nu^*,\]
where we note that, $i_{\alpha}w\in \wedge^{k-1}\F$. Therefore there is a long exact sequence 
 \begin{equation}\label{les poisson}\arraycolsep=1.4pt
  \begin{array}{ccccccccccc}
   \dots &\xrightarrow{\partial}& H^k(\mathcal{F}) &\xrightarrow{j}&H^k(M,\pi)&\xrightarrow{p}&H^{k-1}(\mathcal{F},\nu)&\xrightarrow{\partial}&H^{k+1}(\mathcal{F})&\to&\dots
  \end{array}
 \end{equation}
The boundary morphism $\partial$ is up to a sign given by the cup-product with the class $\dif_{\nu}[\omega]\in H^2(\mathcal{F},\nu^*)$, where $\dif_{\nu}:H^{\bullet}(\F)\to H^{\bullet}(\F,\nu^*)$ is the boundary map of the long exact sequence associated to \eqref{ses fol-deRham}. This class has the geometric interpretation of being the transverse variation of the leafwise symplectic form. 
 
Assume now that $\F$ is coorientable and unimodular, and let $\varphi$ be a closed defining one-form. Then $\varphi$ gives flat trivializations of the bundles $\nu$ and $\nu^*$, and the cohomology of $\F$ with trivial coefficients and with coefficients in these bundles can all be calculated using the subcomplex \eqref{the complex} of the de Rham complex, which will be useful in our situation. Thus \eqref{ses poisson} can be rewritten as the short exact sequence:
 \begin{align}\label{ses poisson unimod}
  0\to (\varphi\wedge \Omega^{\bullet}(M),\dif)\xrightarrow{j_{\varphi}} (\mathfrak{X}^{\bullet}(M),\dif_{\pi})\xrightarrow{p_{\varphi}} (\varphi\wedge \Omega^{\bullet-1}(M),\dif)\to 0.
 \end{align}
Unravelling the identifications made above, the maps $j_{\varphi}$ and $p_{\varphi}$ can be made explicitly. Namely, let $V$ be a vector field on $M$ such that $i_V\varphi=1$, and let $\widetilde{\omega}\in \Omega^2(M)$ be the unique extension of $\omega$ such that $i_{V}\widetilde{\omega}=0$. 
Then 
\[j_{\varphi}=(-\pi^{\sharp})\circ i_{V}, \ \ \ \  p_{\varphi}=e_{\varphi}\circ (-\widetilde{\omega}_b)\circ i_{\varphi},\]
where $e_{\varphi}(-)=\varphi\wedge (-)$ is the exterior product with $\varphi$. Even though it was convenient to use $V$ (and $\widetilde{\omega}$) to write these formulas, the maps $j_{\varphi}$ and $p_{\varphi}$ are independent of this choice. However, $V$ allows us to build dual maps:
\[  0\leftarrow \varphi\wedge \Omega^{\bullet}(M)\xleftarrow{p_{V}} \mathfrak{X}^{\bullet}(M)\xleftarrow{j_{V}} \varphi\wedge \Omega^{\bullet-1}(M)\leftarrow 0,\]
with
\begin{equation}\label{dual_maps}
p_V=e_{\varphi}\circ (-\widetilde{\omega}_b), \ \ \ j_V=e_V\circ (-\pi^{\sharp})\circ i_V,
\end{equation}
which satisfy the homotopy relations:
\begin{equation}\label{dual_maps_relations}
p_V\circ j_V=0,\ \ p_V\circ j_{\varphi}=\mathrm{Id}, \ \ p_{\varphi}\circ j_{V}=\mathrm{Id}, \ \ \mathrm{Id}=j_V\circ p_{\varphi}+j_{\varphi}\circ p_V.
\end{equation}
It can be checked that the maps $p_V$ and $j_V$ are chain morphisms precisely when $V$ is a Poisson vector field, which is also equivalent $\widetilde{\omega}$ being closed; in this case the pair $(\varphi,\widetilde{\omega})$ is a cosymplectic structure on $M$. In general, we can write $\dif\widetilde{\omega}=\varphi\wedge \xi$, where $\xi=i_V\dif\widetilde{\omega}$. Even though we will not use this later, let us remark that the boundary map for the long exact sequence in cohomology induced by \eqref{ses poisson unimod} is given up to sign by the chain map:
\[  e_{\xi}:(\varphi\wedge \Omega^{\bullet-1}(M),\dif)\to (\varphi\wedge \Omega^{\bullet+1}(M),\dif), \ \ \eta\mapsto \xi\wedge \eta.\]
 
\subsection{A short exact sequence for the flat Poisson complex}
 
If we remove the origin from the Poisson manifold $(\mathfrak{sl}_2^*(\mathbb{R}),\pi)$, we obtain a codimension one symplectic foliation $(\F,\omega)$ which is unimodular with defining one-form $\dif f$. Therefore, the techniques from the previous section can be used to describe its cohomology.
Consider the vector field on $\R^3\backslash\{0\}$
 \begin{align*}
   V:=&\frac{1}{2(x^2+y^2+z^2)}(x\partial_x+y\partial_y-z\partial_z),
  \end{align*}
and note that $\dif f (V)=1$. The unique extension $\widetilde{\omega}$ of the leafwise symplectic structure satisfying $i_V\widetilde{\omega}=0$ is given by 
  \begin{align*}
   \widetilde{\omega}:=-\frac{1}{x^2+y^2+z^2}(x\dif y\wedge \dif z +y\dif z\wedge \dif x -z\dif x \wedge \dif y).
  \end{align*}
Therefore, the Poisson complex of $(\mathfrak{sl}_2^*(\mathbb{R})\backslash\{0\},\pi)$ fits into the short exact sequence \eqref{ses poisson unimod}, with $\varphi=\dif f$. However, since the singularities of $V$ and $\widetilde{\omega}$ are of finite order, we can apply the same reasoning and obtain a similar short exact sequence for the flat Poisson cohomology. Denote by $\Omega_0^{\bullet}(\R^3)$ the space of differential forms on $\R^3$ which are flat at zero. The following holds:

\begin{proposition}\label{ses for flat things}
The flat Poisson complex of $\mathfrak{sl}_2^*(\mathbb{R})$ fits into the short exact sequence:
 \[ 0\to (\dif f\wedge \Omega_0^{\bullet}(\R^3),\dif)\xrightarrow{j_{\dif f}} (\mathfrak{X}^{\bullet}_0(\mathfrak{sl}_2^*(\mathbb{R})),\dif_{\pi})\xrightarrow{p_{\dif f}} (\dif f\wedge \Omega^{\bullet-1}_0(\R^3),\dif)\to 0,
 \]
 where $j_{\dif f}=(-\pi^{\sharp})\circ i_{V}$ and  $p_{\dif f}=e_{\dif f}\circ (-\widetilde{\omega}_b)\circ i_{\dif f}$.
\end{proposition}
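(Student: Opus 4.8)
\emph{The plan} is to obtain the sequence by restricting the regular short exact sequence \eqref{ses poisson unimod} — which holds on the punctured space $\R^3\setminus\{0\}$, where $\pi$ is regular of corank one and unimodular with closed defining one-form $\varphi=\dif f$ — to the subcomplexes of tensors that are flat at the origin. Everything in \eqref{ses poisson unimod}, \eqref{dual_maps} and the homotopy relations \eqref{dual_maps_relations} is defined on $\R^3\setminus\{0\}$; the whole content of the proposition is that it all descends to the flat world on $\R^3$.

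The single analytic ingredient, which I would isolate as a preliminary lemma, is the following: if $g$ is smooth on $\R^3\setminus\{0\}$ and $g$ together with all its derivatives blows up at most polynomially at $0$ — as happens for rational functions whose denominator is a power of $x^2+y^2+z^2$ — while $\psi$ is flat at $0$, then $g\psi$ extends to a smooth function on $\R^3$ that is again flat at $0$. Indeed, by the Leibniz rule each derivative $\partial^{\beta}(g\psi)$ is a finite sum of terms $\partial^{\alpha}g\,\partial^{\beta-\alpha}\psi$; the factor $\partial^{\beta-\alpha}\psi$ vanishes at the origin faster than any power of the distance, which dominates the polynomial blow-up of $\partial^{\alpha}g$, so every term extends by $0$ and $g\psi$ is flat.

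I would then apply this to the four maps $j_{\dif f}=(-\pi^{\sharp})\circ i_{V}$, $p_{\dif f}=e_{\dif f}\circ(-\widetilde{\omega}_{b})\circ i_{\dif f}$, and their partners $j_{V},p_{V}$ of \eqref{dual_maps}. Each is assembled by contracting and wedging with $\pi$ and $\dif f$, whose coefficients are polynomial, and with $V$ and $\widetilde{\omega}$, whose coefficients are rational with a pole of finite order at $0$ (the denominator being a power of $x^2+y^2+z^2$). Hence all four maps carry tensors flat at $0$ to tensors flat at $0$; in particular $j_{\dif f}$ and $p_{\dif f}$ restrict to the flat subcomplexes, and they remain chain maps because they are so on $\R^3\setminus\{0\}$ and the differentials $\dif$ and $\dif_{\pi}$ preserve flatness (note $\dif(\dif f\wedge\beta)=-\dif f\wedge\dif\beta$, so $\dif f\wedge\Omega_0^{\bullet}(\R^3)$ is indeed a subcomplex, while $\mathfrak{X}_0^{\bullet}(\mathfrak{sl}_2^*(\mathbb{R}))$ is one as recalled earlier).

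With flatness in hand, exactness is purely formal. A flat tensor is determined by its restriction to the dense open set $\R^3\setminus\{0\}$, so the pointwise identities \eqref{dual_maps_relations} persist between the flat subcomplexes. From $p_{V}\circ j_{\dif f}=\mathrm{Id}$ the map $j_{\dif f}$ is injective; from $p_{\dif f}\circ j_{V}=\mathrm{Id}$ the map $p_{\dif f}$ is surjective, with explicit right inverse $j_{V}$; and from $p_{\dif f}\circ j_{\dif f}=0$ (inherited from the exactness of \eqref{ses poisson unimod} on the punctured space) together with the splitting $\mathrm{Id}=j_{V}\circ p_{\dif f}+j_{\dif f}\circ p_{V}$, any $w\in\ker p_{\dif f}$ satisfies $w=j_{\dif f}(p_{V}(w))$, whence $\ker p_{\dif f}=\im j_{\dif f}$. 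This yields the asserted short exact sequence. \emph{The only real obstacle} is the flatness claim for the singular maps; once the finite order of the poles of $V$ and $\widetilde{\omega}$ is exploited as above, everything else is inherited verbatim from the regular theory of the previous subsection.
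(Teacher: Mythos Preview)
Your proposal is correct and follows essentially the same route as the paper: both argue that the finite-order singularities of $V$ and $\widetilde{\omega}$ ensure that $j_{\dif f}$, $p_{\dif f}$, $j_V$, $p_V$ all restrict to the flat subcomplexes, and then deduce exactness from the homotopy relations \eqref{dual_maps_relations}, which persist because they hold on the dense open set $\R^3\setminus\{0\}$. Your write-up is a bit more explicit about the analytic lemma (flat times polynomially-bounded-singular is flat) and about how each exactness condition falls out of the relations, but there is no substantive difference.
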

\begin{proof}
First, note that $i_V$ is indeed well-defined: if $\eta$ is a flat form at $0$, then $i_V\eta$ extends smoothly at zero and is also flat. The same applies also to the map $\widetilde{\omega}_b$, hence the maps $j_{\dif f}$ and $p_{\dif f}$ are well-defined. They are chain maps because they satisfy this condition away from $0$. In order to show that the sequence is exact, note that also the maps $p_V$ and $j_V$ defined in \eqref{dual_maps} induce maps on flat forms/multi-vector fields. Relations \eqref{dual_maps_relations} (which still hold, because they hold away from the origin) imply that the sequence in the statement is indeed exact. 
\end{proof}

We call the cohomology of the complex 
\begin{equation}\label{ffcomplex}
(\dif f\wedge\Omega^{\bullet}_0(\R^3),\dif)
\end{equation}
the flat foliated cohomology, and denote it by
 \[H^{\bullet}_0(\F,\nu^*).\]
Consider the angular one-form on $\R^3\backslash\{x=y=0\}$ by
\[\dif \theta=\frac{1}{x^2+y^2}(-y\dif x+x\dif y).\]
The proof of the following result will occupy Sections \ref{section flat foli} and \ref{section: analysis}:

\begin{theorem}\label{flat foliated cohomology}
Cohomology classes in $H^{k}_0(\mathcal{F},\nu^*)$ have unique representatives of the form:  
    \begin{itemize}
     \item for $k=0$ 
\[    \chi \dif f, \ \ \textrm{where} \ \ \chi \in C^{flat},\]
     \item for $k=1$
     \[\eta \dif f  \wedge \dif \theta, \ \ \textrm{where}  \  \ \eta\in C^{out},\]
    \item and for $k\geq 2$, $H^{k}_0(\mathcal{F},\nu^*)=0$.
    \end{itemize}
   \end{theorem}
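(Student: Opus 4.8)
The plan is to compute the cohomology of the flat foliated complex \eqref{ffcomplex} by a leafwise homotopy retraction onto a lower-dimensional ``skeleton''. Note first that the complex is concentrated in degrees $0,1,2$, since $\dif f\wedge\Omega^3_0(\R^3)=0$; in particular every degree-$2$ element is automatically closed, so the assertions with content are the descriptions of $H^0_0$ and $H^1_0$, the vanishing $H^{k}_0=0$ for $k\geq 2$, and the uniqueness of representatives. Degree $0$ is immediate: a cocycle $g\,\dif f$ with $g$ flat at $0$ is closed iff $\dif g\wedge\dif f=0$, i.e.\ $g$ is constant along the leaves of $\F$; by Proposition \ref{prop:Casimirs} such flat $g$ are exactly $C^{flat}$, so $H^0_0(\F,\nu^*)=C^{flat}\dif f$, with uniqueness automatic since there are no coboundaries.

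For the higher degrees I would contract each leaf onto the part carrying its cohomology. The leaves in $O$ are cylinders $S_{\lambda}\simeq S^1\times\R$, whose cohomology is generated by the core circle $\{z=0\}$, while the leaves in $I$ and the two cone-leaves $S^{\pm}_0$ are planes, hence contractible. Accordingly, I would construct a vector field $X$ tangent to $\F$ whose flow $\Phi_t$ retracts, as $t\to\infty$, the cylinders onto their core circles and the planar leaves onto points; on $O$, in the coordinates \eqref{coordinates O}, a local model is $X|_O=-w\,T|_O=-w\,\partial_w$, whose flow pushes $w\mapsto e^{-t}w\to 0$ onto $\{w=0\}$, the construction on $I$ and the matching across the cone being arranged globally. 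Since $X$ is tangent to $\F$ we have $i_X\dif f=0$, so $i_X$ preserves the ideal generated by $\dif f$, and Cartan's formula $\Lie_X=\dif\,i_X+i_X\dif$ yields the homotopy operator
\[
h(\omega):=-\int_0^{\infty}\Phi_t^{*}(i_X\omega)\,\dif t,\qquad \mathrm{Id}-\Phi_{\infty}^{*}=\dif\,h+h\,\dif ,
\]
on the flat foliated complex, provided this improper integral converges and preserves flatness at $0$.

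Granting this, the cohomology is that of the skeleton. Because $\Phi_{\infty}$ maps each leaf into a set of dimension $\leq 1$, the pullback $\Phi_{\infty}^{*}$ annihilates every leafwise $2$-form; hence each (closed) degree-$2$ cocycle $\omega$ satisfies $\omega=\dif(h\omega)$, giving $H^{2}_0(\F,\nu^*)=0$, and the complex vanishes in degrees $\geq 3$. In degree $1$ one has $[\omega]=[\Phi_{\infty}^{*}\omega]$, and $\Phi_{\infty}^{*}\omega$ is supported on the skeleton: over $I$ it vanishes, while over $O$ it is a multiple $\eta\,\dif f\wedge\dif\theta$ of the circle generator, where $\eta$ is a function of $f$ supported in $\overline{O}$; since the one-sheeted hyperboloids join $\{z>0\}$ and $\{z<0\}$ the coefficient is insensitive to the sheet, and smoothness across the cone $Z$ forces $\eta$ flat at $0$, that is $\eta\in C^{out}$. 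Uniqueness follows because the homotopy equivalence identifies $H^1_0(\F,\nu^*)$ with the skeleton cohomology, on which the class of $\eta\,\dif f\wedge\dif\theta$ is detected by integration over the core circles.

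The main obstacle is the analysis of the flow, which is exactly the technical content deferred to Sections \ref{section flat foli} and \ref{section: analysis}. One must choose $X$ so that its flow is globally defined and controlled across the singular cone $Z$---where the leaf type changes from cylinder to plane and the coordinate systems \eqref{coordinates O}--\eqref{coordinates I} degenerate---and near the origin, and then prove that the integral defining $h$ converges and sends flat forms to flat forms. On $O$ the integrand decays exponentially (as $w\sim e^{-t}$), which should give convergence there; the delicate points are uniform estimates near $Z$ and the preservation of flatness at $0$ under $\Phi_t^{*}$ and under the $t$-integration. Establishing these estimates---thereby showing the retraction is a genuine homotopy equivalence of the flat foliated complex---is the crux of the argument.
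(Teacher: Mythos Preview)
Your strategy is the paper's: retract each leaf onto its cohomological core via the flow of a leaf-tangent vector field, build the homotopy operator by integrating $\Phi_t^{*}(i_X\,\cdot\,)$, and read off representatives from the skeleton. Your degree-$0$ argument and the ``$\Phi_\infty^{*}$ kills leafwise $2$-forms'' step match the paper exactly.

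Two places where the paper is more precise than your outline, and which you should be aware of:

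\textbf{(1) $S^1$-averaging.} The paper first averages over the rotation action to reduce to the invariant subcomplex $\CC^{\bullet}=\dif f\wedge\Omega_0^{\bullet}(\R^3)^{S^1}$ (Lemma~\ref{lemma:averaging}). This is what makes your degree-$1$ claim ``over $O$ it is a multiple $\eta\,\dif f\wedge\dif\theta$ where $\eta$ is a function of $f$'' true on the nose: without invariance, $\Phi_\infty^{*}\alpha|_{\{z=0\}}=g(x,y)\,\dif x\wedge\dif y$ with $g$ a priori not radial, and you would still need to argue that the non-radial part is exact in the flat foliated complex. Averaging also gives the uniqueness cleanly, via Lemma~\ref{lemma: closed}: on the invariant image $p_X^{*}(\CC^{\bullet})$ the differential vanishes identically, so each class has a single representative there. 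Your period-integral argument for uniqueness is morally the same but would need more work to be made rigorous in the flat setting.

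\textbf{(2) The vector field.} Your local model $X|_O=-w\,\partial_w=-z\,T|_O$ is exactly $W/r^{2}$, where $W=-r^{2}z\,\pi^{\sharp}(\dif\theta)$ is the paper's choice. Your $X$ has a pole along $\{r=0\}$ and cannot be extended smoothly across the cone; the factor $r^{2}$ is what makes $W$ a polynomial vector field on $\R^{3}$. The price is that the retraction to the skeleton then takes infinite time and the convergence $\phi_t\to p_X$ is \emph{not} $C^{\infty}$ along $Z$ (only away from it, Lemma~\ref{convergence of the flow}); the homotopy integral is rescued precisely by the flatness of the forms at $0$, which supplies the missing powers of $R$ in the estimates (Lemmas~\ref{pullback estimates} and \ref{derivative of h}). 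You anticipated this (``flatness at $0$ under $\Phi_t^{*}$ and under the $t$-integration''), but the mechanism---flatness compensating for the slowed-down flow near $Z$---is the heart of Section~\ref{section: analysis}.
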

   
\subsection{Higher Poisson cohomology groups}

In this subsection we finish the proof of Theorem \ref{main theorem}. 

Since $H^2_0(\F,\nu^*)=0$, the long exact sequence in cohomology induced by the short exact sequence in Proposition \ref{ses for flat things} yields:\\

\noindent\underline{In degree $0$}: $j_{\dif f}$ induces an isomorphism:
    \[H^{0}_0(\F,\nu^*)\simeq C^{flat}.\]
This isomorphism is simply $j_{\dif f}(\chi \dif f)=\chi$.\\

\noindent\underline{In degree $1$}: $j_{\dif f}$ and $p_{\dif f}$ induce a short exact sequence:
    \begin{equation}\label{ses in coho}
    0\to H^{1}_0(\F,\nu^*)\to H^1_0(\mathfrak{sl}_2^*(\mathbb{R}),\pi)\to H^0_0(\F,\nu^*)\to 0,
    \end{equation}
The map $j_{\dif f}$ acts on the representatives of $H^1_0(\F,\nu^*)$ as follows:
\[j_{\dif f}(\eta \dif f \wedge \dif \theta)=-\eta\pi^{\sharp}(\dif\theta), \ \ \textrm{where}\ \eta\in C^{out}.\]
Since $T|_O=\pi^{\sharp}(\dif \theta)|_O$, the image of $j_{\dif f}$ consists of all the elements
\[[\eta T],\ \ \ \textrm{with}\ \ \eta \in C^{out}.\] 
One other hand, since $\Lie_Nf=1$, note that 
\[p_{\dif f}(\chi N)= \chi\dif f, \ \ \  \textrm{for all } \ \chi\in C^{flat}.\]
Therefore the set consisting of classes of the form $[\chi N]$ is sent by $p_{\dif f}$ onto $H^0(\F,\nu^*)$. Exactness of the sequence \eqref{ses in coho} and Theorem \ref{flat foliated cohomology}, imply that elements in $H^1_{0}(\mathfrak{sl}_2^*(\mathbb{R}),\pi)$ can be uniquely represented as
\[\eta T+\chi N,\]
with $\eta\in C^{out}$ and $\chi\in C^{flat}$. By Corollaries 
\ref{formal cohomology sl2} and \ref{ses smooth formal}, \[H^1(\mathfrak{sl}_2^*(\mathbb{R}),\pi)=H^1_{0}(\mathfrak{sl}_2^*(\mathbb{R}),\pi);\] thus we obtain the description of $H^1(\mathfrak{sl}_2^*(\mathbb{R}),\pi)$ from Theorem \ref{main theorem}. \\

\noindent\underline{In degree $2$}: $p_{\dif f}$ induces an isomorphism:
\[H^2_0(\mathfrak{sl}_2^*(\mathbb{R}),\pi)\simeq H^1_0(\F,\nu^*).\]
We note that, for all $\eta\in C^{out}$,
\[p_{\dif f}(\eta N\wedge T)=-\eta \dif f\wedge \dif \theta.\]
Hence, reasoning as in the previous case, we obtain the description of the second Poisson cohomology group $H^2(\mathfrak{sl}_2^*(\mathbb{R}),\pi)$ from Theorem \ref{main theorem}.\\

\noindent\underline{In degree $3$}: we have that:
\[H^3_0(\mathfrak{sl}_2^*(\mathbb{R}),\pi)= 0.\]
By using again Corollaries \ref{formal cohomology sl2} and \ref{ses smooth formal}, we obtain the description from Theorem \ref{main theorem} of  $H^3(\mathfrak{sl}_2^*(\mathbb{R}),\pi)$. 

\section{Flat foliated cohomology}\label{section flat foli}

In this section we reduce the proof of Theorem \ref{flat foliated cohomology} to two technical results, which will be proven in Section \ref{section: analysis}.

\subsection{Averaging over $S^1$}

In order to compute the flat foliated cohomology, it will be more convenient to work with $S^1$-invariant forms, where we consider the natural action of $S^1$ on $\R^3$ by rotations around the $z$-axis. Since $ f$ is $S^1$-invariant, and $0$ is fixed by the action, the invariant part of \eqref{ffcomplex} forms a subcomplex, denoted:
\begin{equation*}
(\CC^{\bullet},\dif):=(\dif f\wedge \Omega^{\bullet}_0(\R^3)^{S^1},\dif).
\end{equation*}
Note that averaging operator 
\[Av:(\dif f\wedge \Omega^{\bullet}_0(\R^3),\dif)
\to (\CC^{\bullet},\dif),\]
\[Av(\alpha)=\frac{1}{2\pi}\int_0^{2\pi} \big(e^{i\theta}\big)^*\alpha\dif \theta,\]
is a chain map and a projection onto $\CC^{\bullet}$. 

\begin{lemma}\label{lemma:averaging}
The map $Av$ induces an isomorphism in cohomology.
\end{lemma}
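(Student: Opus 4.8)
The strategy is to exhibit the inclusion $\iota\colon\CC^{\bullet}\hookrightarrow\dif f\wedge\Omega_0^{\bullet}(\R^3)$ of the invariant subcomplex as a homotopy inverse of $Av$. Since every $S^1$-invariant form is fixed by averaging, we have $Av\circ\iota=\id_{\CC^{\bullet}}$, hence $Av_*\circ\iota_*=\id$ on cohomology; in particular $\iota_*$ is injective and $Av_*$ is surjective. To upgrade this to an isomorphism it suffices to produce a chain homotopy $h$ on the full complex satisfying
\[
\id-\iota\circ Av=\dif\circ h+h\circ\dif,
\]
for then $\iota_*\circ Av_*=\id$ as well, so that $Av_*$ and $\iota_*$ are mutually inverse.

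Such a homotopy is built from the flow generating the $S^1$-action. Let $X:=x\partial_y-y\partial_x$ be its infinitesimal generator and let $\rho_\theta$ denote rotation by angle $\theta$, so that $Av(\alpha)=\frac{1}{2\pi}\int_0^{2\pi}\rho_\theta^*\alpha\,\dif\theta$. The plan is to set
\[
h(\alpha):=-\frac{1}{2\pi}\int_0^{2\pi}\!\!\int_0^{\theta}\rho_s^*(\iota_X\alpha)\,\dif s\,\dif\theta.
\]
Using that $\dif$ commutes with $\rho_s^*$ and with the integrals, Cartan's formula $\Lie_X=\dif\iota_X+\iota_X\dif$, and the fundamental theorem of calculus in the form $\int_0^{\theta}\rho_s^*\Lie_X\alpha\,\dif s=\rho_\theta^*\alpha-\alpha$, a direct computation yields $\dif h+h\dif=\id-\iota\circ Av$. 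This is the routine part of the argument.

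The point that needs care is that $h$ genuinely maps $\dif f\wedge\Omega_0^{\bullet}(\R^3)$ into itself. This rests on two observations. First, $f$ is $S^1$-invariant, so $\iota_X\dif f=X(f)=0$, and therefore $\iota_X(\dif f\wedge\beta)=-\dif f\wedge\iota_X\beta$; thus $\iota_X$ preserves the differential ideal generated by $\dif f$. Second, flatness at the origin is preserved by each of the three operations defining $h$: contraction $\iota_X$ with the smooth linear vector field $X$, pullback $\rho_s^*$ by a diffeomorphism fixing $0$, and integration over the compact parameter ranges $s\in[0,\theta]$, $\theta\in[0,2\pi]$ (which also preserves smoothness, by differentiation under the integral sign). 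Hence $h(\alpha)\in\dif f\wedge\Omega_0^{\bullet}(\R^3)$ whenever $\alpha$ is, so the homotopy identity holds inside the complex. This stability check is the only real—though mild—obstacle; once $X(f)=0$ and the flatness-preservation of the three constituent operations are recorded, the homotopy identity delivers the isomorphism.
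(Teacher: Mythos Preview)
Your proof is correct and follows essentially the same approach as the paper: both construct the same homotopy operator from the $S^1$-flow via Cartan's formula and integration over the group, yielding $Av-\id=\dif h+h\dif$ (your sign convention is the opposite, hence the minus sign in your $h$). The only difference is that you spell out explicitly why $h$ preserves the subcomplex $\dif f\wedge\Omega_0^{\bullet}(\R^3)$ (using $\iota_X\dif f=0$ and preservation of flatness), a point the paper leaves implicit.
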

\begin{proof}
Let $\partial_{\theta}=x\partial_{y}-y\partial_x$ be the rotational vector field generating the $S^1$-action. Then, for all $\alpha\in \dif f\wedge \Omega^{\bullet}_0(\R^3)$,
\begin{align}\label{eq:pull back}
(e^{i\theta})^*\alpha-\alpha&=\int_0^{\theta}\frac{\dif}{\dif t}(e^{it})^*\alpha\dif t=\int_0^{\theta}\Lie_{\partial_{\theta}}(e^{it})^*\alpha\dif t\\\nonumber
&=\dif\int_0^{\theta}i_{\partial_{\theta}}(e^{it})^*\alpha\dif t+ \int_0^{\theta}i_{\partial_{\theta}}(e^{it})^*\dif \alpha\dif t.
\end{align}
Integrating this equation from $0$ to $2\pi$, we obtain
\begin{equation}\label{eq:averaging}
Av(\alpha)-\alpha=\dif \circ h (\alpha)+h\circ \dif (\alpha),
\end{equation}
where $h$ denotes the homotopy operator: 
\[h:\dif f\wedge \Omega^{\bullet}_0(\R^3)
\to \dif f\wedge \Omega^{\bullet-1}_0(\R^3),\]
\[h(\alpha)=\frac{1}{2\pi}\int_0^{2\pi}\dif \theta \int_0^{\theta}
i_{\partial_{\theta}}(e^{it})^*\alpha \dif t=
\frac{1}{2\pi}\int_0^{2\pi}(2\pi-t)i_{\partial_{\theta}}(e^{it})^*\alpha \dif t.\]
The homotopy relation \eqref{eq:averaging} implies the statement.
\end{proof}

\subsection{Retraction to the ``cohomological skeleton''}

In order to calculate the cohomology of $\CC^{\bullet}$, we use a retraction onto the set 
\begin{align*}
X:=\{ x=y=0\} &\cup \{z=0\}
\end{align*}
along the leaves of the foliation. We think about $X$ as a ``cohomological skeleton'' of the singular foliation. The leaves in $I$ are diffeomorphic to $\R^2$ and $X$ intersects them exactly in one point, and the leaves $O$ are diffeomorphic to $S^1\times \R$, and $X$ intersects these in one circle. On the other hand, $X$ does not intersect the two leaves in the cone, but, as we will see, these will not contribute to the flat cohomology. Define the retraction as follows:
\begin{align}\label{retract}
p_{X} (x,y,z)&:=
   \begin{cases}
    (\frac{x}{r}\sqrt{f},\frac{y}{r}\sqrt{f},0)&\text{ on } O\\
    (0,0,0)&\text{ on }Z\\
    (0,0,\sign(z)\sqrt{-f})&\text{ on }I
\end{cases}
\end{align}
Note that $p_X$ preserves $\R^3\backslash Z$, and it satisfies:
\[p_X(\R^3)=X, \ \ \textrm{and}\ \ p_X|_{X}=\id_X.\]
Also, note that $p_X$ is continuous on $\R^3$, it is smooth on $\R^3\backslash Z$, but it is not smooth on $Z$. However, since we are working with forms that are flat at $0$, the following holds:

\begin{lemma}\label{infinite time pullback}
For every $\alpha \in \Omega_{0}^{\bullet}(\R^3)$, the form $p_{X}^*(\alpha)|_{\R^3\backslash Z}$ extends to a smooth form on $\R^3$, which satisfies $p_{X}^*(\alpha)\in \Omega_{0}^{\bullet}(\R^3)$. \end{lemma}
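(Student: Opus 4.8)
The plan is to work directly from the explicit formula \eqref{retract} for $p_X$, reducing everything to a single quantitative estimate powered by the flatness of $\alpha$. Since $\R^3\setminus Z=O\sqcup I$ and $p_X$ is built on each piece from $x,y,z$, the radial function $r:=\sqrt{x^2+y^2}$, and $\sqrt{|f|}$ (together with the locally constant $\sign(z)$ on $I$), the map $p_X$ is smooth on $O$ and on $I$ separately; hence $p_X^*(\alpha)$ is a bona fide smooth form on $\R^3\setminus Z$. To prove the lemma it therefore suffices to show that, writing $p_X^*(\alpha)=\sum_J c_J\,\dif x_J$ in the standard frame, every coefficient $c_J$ and every partial derivative $\partial^\gamma c_J$ extend continuously across $Z$ by the value $0$. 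Granting this, a standard extension lemma (a continuous function that is smooth off a closed set of measure zero and all of whose partials extend continuously is $C^\infty$, with the extended partials as its derivatives) shows that $p_X^*(\alpha)$ extends to a smooth form vanishing to infinite order along $Z$; as $0\in Z$, this yields $p_X^*(\alpha)\in\Omega_0^\bullet(\R^3)$.

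Two elementary geometric facts will drive the estimate. First, $p_X$ preserves $f$ and satisfies $\lVert p_X(x,y,z)\rVert^2=|f(x,y,z)|$ (immediate from the formula: on $O$ one gets $(x^2+y^2)f/r^2=f$, and on $I$ one gets $-f$), so $p_X$ collapses $Z$ to the origin and maps every neighborhood of $Z$ into a neighborhood of $0$. Second, on $O$ one has $r^2=f+z^2\ge f>0$, i.e.\ $r\ge\sqrt f$, which will let us trade the axis singularity $r^{-1}$ of $p_X$ for the cone singularity $|f|^{-1/2}$; on $I$ the map involves no $1/r$ factor at all.

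The main step is then as follows. By the chain and Leibniz rules, each $\partial^\gamma c_J$ is a finite sum of terms of the form $(\partial^\beta\alpha_I)(p_X)\cdot g$, where $\alpha_I$ is a coefficient of $\alpha$ and $g$ is an algebraic function on $O$ (resp.\ $I$) assembled from $x,y,z,r,\sqrt{|f|}$ and their derivatives. Differentiating these building blocks produces only negative powers of $r$ and of $|f|$ with factors that are smooth on $\R^3\setminus Z$, so on any ball $\bar B_R$ one gets a bound $|g|\le C\,r^{-a}|f|^{-b}$ for some $a,b\ge0$. On $O$ we use $r\ge\sqrt f$ to absorb $r^{-a}\le f^{-a/2}$, obtaining $|g|\le C|f|^{-m}$ with $m=a/2+b$; on $I$ already $|g|\le C|f|^{-b}$. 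On the other hand, flatness of $\alpha$ at $0$ gives, for every $N$, a constant $C_N$ with $|\partial^\beta\alpha_I(p)|\le C_N\lVert p\rVert^N$ for $\lVert p\rVert\le R$; evaluating at $p=p_X(x,y,z)$ and using $\lVert p_X\rVert=\sqrt{|f|}$ yields $|(\partial^\beta\alpha_I)(p_X)|\le C_N|f|^{N/2}$. Choosing $N>2m$, each term is $\le C_N|f|^{N/2-m}\to0$ as $(x,y,z)\to Z$, uniformly on $\bar B_R$. Hence $\partial^\gamma c_J\to0$ along $Z$, which is exactly the input required by the extension lemma.

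The only genuine obstacle is the behaviour near the origin, where the two singular loci of $p_X$ collide: the cone $Z=\{f=0\}$, across which $\sqrt{|f|}$ has infinite-slope derivatives, and the $z$-axis/origin, near which the angular part $x/r$ of $p_X$ and its derivatives blow up. Away from the origin each is harmless (a point of $Z\setminus\{0\}$ lying in $\overline{O}$ has $r$ bounded below, and $I$ has no axis singularity), but at the origin both appear at once. The inequality $r\ge\sqrt f$ on $O$ is precisely what lets us rewrite the axis blow-up as a finite negative power of $|f|$, after which the defining feature of $\Omega_0^\bullet(\R^3)$ — that $\alpha$ vanishes to infinite order at $0$ and hence furnishes arbitrarily high powers of $|f|^{1/2}$ through $\lVert p_X\rVert=\sqrt{|f|}$ — dominates the fixed algebraic blow-up of $Dp_X$ and forces flatness along all of $Z$.
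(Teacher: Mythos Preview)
Your argument is correct and is a genuinely different route from the paper's. The paper does \emph{not} estimate $p_X^*$ directly; instead it realises $p_X$ as the $t\to\infty$ limit of the flow $\phi_t$ of the vector field $W=-r^2z\,\pi^\sharp(\dif\theta)$, proves that $\phi_t^*(\alpha)$ converges in the compact-open $C^\infty$-topology on all of $\R^3$ (via the homotopy relation $\phi_t^*(\alpha)-\alpha=\dif h_t(\alpha)+h_t(\dif\alpha)$ together with the convergence of $h_t$, Lemma~\ref{well-defined}), and separately shows $\phi_t\to p_X$ in $C^\infty$ on $\R^3\setminus Z$ (Lemma~\ref{convergence of the flow}). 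The smooth extension of $p_X^*(\alpha)$ then drops out of the identification of the two limits. Your approach is more elementary and self-contained for this lemma in isolation: the key inequality $r\ge\sqrt f$ on $O$, together with $\lVert p_X\rVert=\sqrt{|f|}$, does all the work. The price you pay is that you get no homotopy operator; the paper's heavier machinery is not wasted, since the same estimates simultaneously yield Proposition~\ref{lemma: homotopies}, which is what the cohomology computation actually needs.

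One small point of care: the ``standard extension lemma'' you invoke is not valid for an arbitrary closed set of measure zero (think of a Cantor-type set along a line). What makes it go through here is that every line parallel to a coordinate axis meets the cone $Z=\{x^2+y^2=z^2\}$ in at most two points, so the one-variable mean-value-theorem argument (continuous, differentiable off finitely many points, derivative extends continuously $\Rightarrow$ $C^1$) applies line by line. It would strengthen your write-up to state this explicitly rather than appealing to measure zero.
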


The proof of this result is given at the end of Subsection \ref{subsection: homotopy operators}.

We have that $p_X^*$ is $S^1$-equivariant, it commutes with $\dif$ and with $e_{\dif f}$; these properties hold, because they are closed and they hold on $\R^3\backslash Z$. Therefore, $p_X^*$ induces a chain map:
\[p_X^*:(\CC^{\bullet},\dif)\to (\CC^{\bullet},\dif).\]
In the next subsection, we will show that this is an isomorphism in cohomology, more precisely:
\begin{proposition}\label{lemma: homotopies}
There are linear maps 
\[h:\CC^{\bullet}\to \CC^{\bullet-1}\]
which satisfy the homotopy relation:
\[p_X^*(\alpha)-\alpha=\dif \circ h (\alpha)+h\circ \dif (\alpha).\]
\end{proposition}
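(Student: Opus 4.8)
The goal is to construct homotopy operators $h:\CC^{\bullet}\to\CC^{\bullet-1}$ exhibiting $p_X^*$ as chain-homotopic to the identity. The natural strategy is to realize $p_X$ as the time-infinity limit of a flow and to produce $h$ by integrating the infinitesimal homotopy (Cartan's formula) along that flow. The plan is as follows.

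\paragraph{Constructing a flow whose limit is $p_X$.}
First I would exhibit a complete (in forward time) vector field $W$ on $\R^3\setminus Z$, tangent to the leaves of $\F$, whose forward flow $\psi_s$ converges as $s\to\infty$ to the retraction $p_X$. On $O$, in the coordinates \eqref{coordinates O}, the image $X\cap O=\{w=z=0\}$ should be an attractor within each leaf $S_\lambda\simeq T^*S^1$, so one wants a leafwise vector field that drives $w\to 0$ while fixing $\theta$ and $f$; the simplest choice is $W|_O=-w\,\partial_w$, whose flow is $(\theta,w,f)\mapsto(\theta,e^{-s}w,f)$ and limits to $(\theta,0,f)$, i.e.\ exactly $p_X$ on $O$ after rewriting in the $(x,y,z)$ coordinates. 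On $I$, using \eqref{coordinates I}, the skeleton is $\{v=x=0\}$ and one similarly takes $W|_I=-v\,\partial_v$ so that $\psi_s$ collapses each leaf to its single intersection point with $X$. Since $W$ is leafwise, the flow preserves the foliation, commutes with the $S^1$-action (after checking $S^1$-invariance of $W$, which holds because $\theta$ is the rotation coordinate and $W$ ignores it), and fixes $\dif f$; thus $\psi_s^*$ preserves $\CC^\bullet$.

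\paragraph{Defining $h$ by integrating Cartan's magic formula.}
With $\psi_s$ in hand, the standard identity
\begin{equation*}
\frac{\dif}{\dif s}\psi_s^*\alpha=\psi_s^*\Lie_W\alpha
=\dif\,\psi_s^*(i_W\alpha)+\psi_s^*(i_W\dif\alpha)
\end{equation*}
suggests setting
\begin{equation*}
h(\alpha):=\int_0^\infty \psi_s^*\bigl(i_W\alpha\bigr)\,\dif s,
\end{equation*}
so that integrating the identity from $s=0$ to $s=\infty$ yields
\begin{equation*}
p_X^*(\alpha)-\alpha=\dif\circ h(\alpha)+h\circ\dif(\alpha),
\end{equation*}
which is precisely the claimed homotopy relation (the boundary term at $s=\infty$ contributes $p_X^*\alpha$ by Lemma~\ref{infinite time pullback} and continuity). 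I must check that $h$ lands in $\CC^{\bullet-1}$: the integrand is $S^1$-invariant and flat at $0$ at each finite time, and $i_W$ of a form in the ideal $\dif f\wedge\Omega_0^\bullet$ again lies in that ideal because $W$ is tangent to $\ker\dif f$, so $i_W$ commutes with $e_{\dif f}$ up to sign and preserves the defining-form ideal.

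\paragraph{The main obstacle: convergence and smoothness of the improper integral.}
The genuinely hard point is that $W$ and $\psi_s$ are singular along the cone $Z$ (the coordinate changes \eqref{coordinates O}--\eqref{coordinates I} degenerate there), the flow runs for infinite time, and $h(\alpha)$ is an \emph{improper} integral. I expect this to be exactly the analytic content that the paper defers to Section~\ref{section: analysis}: one must show that for $\alpha$ flat at $0$ the integrand decays fast enough (uniformly together with all derivatives) that $\int_0^\infty\psi_s^*(i_W\alpha)\,\dif s$ converges to a smooth form which is again flat at $0$ and extends across $Z$. The flatness at the origin should provide the needed uniform decay, since near $Z$ the forms vanish to infinite order, beating the finite-order blow-up of $W$ and of $\psi_s^*$; making this quantitative---controlling $C^k$-norms of $\psi_s^*(i_W\alpha)$ by something integrable in $s$, with the estimates extending continuously onto $Z$ as in Lemma~\ref{infinite time pullback}---is the crux. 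Once that convergence and regularity are established, the homotopy identity follows formally from Cartan's formula, and together with Lemma~\ref{lemma:averaging} this computes the cohomology of $\CC^\bullet$, reducing Theorem~\ref{flat foliated cohomology} to the cohomology of the skeleton $X$.
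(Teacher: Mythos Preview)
Your overall strategy---realize $p_X$ as the time-infinity limit of a leafwise flow and integrate Cartan's formula---is exactly the paper's approach. However, your specific choice of vector field on $I$ is incorrect, and this produces two related gaps.

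First, $W|_I=-v\,\partial_v$ (with $v=x$) is \emph{not} $S^1$-invariant. The coordinates $(\xi,v,f)$ on $I$ are not adapted to the $S^1$-action: rotation around the $z$-axis mixes $x$ and $y$, hence changes both $v=x$ and $\xi=\tanh^{-1}(y/z)$. Concretely, at a point $(0,y_0,z_0)\in I$ one has $v=0$, so $W=0$; rotating by $\pi/2$ gives $(-y_0,0,z_0)$, where $v=-y_0\neq 0$ and $W\neq 0$. Your justification (``$\theta$ is the rotation coordinate and $W$ ignores it'') applies only on $O$.

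Second, and for the same reason, the flow of $-v\,\partial_v$ on $I$ does \emph{not} converge to $p_X|_I$. It preserves $\xi$ and $f$ and sends $v\to 0$, so the limit has $x=0$ but $y/z=\tanh\xi$ unchanged. Starting from a point with $y\neq 0$ you land on the hyperbola $\{x=0\}\cap S^\pm_\lambda$, not on the single point $(0,0,\pm\sqrt{-\lambda})$ that $p_X$ prescribes.

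The paper avoids both issues by using a single \emph{polynomial} vector field on all of $\R^3$:
\[
W=-r^2z\,\pi^\sharp(\dif\theta)=-z^2x\,\partial_x-z^2y\,\partial_y-(x^2+y^2)z\,\partial_z.
\]
This $W$ is manifestly $S^1$-invariant, tangent to the leaves ($\Lie_W f=0$), satisfies $\Lie_W\theta=0$, vanishes exactly on $X$, and---crucially---is smooth across $Z$ and at the origin. Consequently the flow $\phi_t$ is globally smooth for every finite $t\ge 0$, and the only delicate point is the limit $t\to\infty$: the paper defers to Section~\ref{section: analysis} the two technical Lemmas~\ref{convergence of the flow} and~\ref{well-defined}, asserting that $\phi_t\to p_X$ in $C^\infty$ on $\R^3\setminus Z$ and that $h_t(\alpha)\to h(\alpha)$ in $C^\infty$ on $\R^3$ for flat $\alpha$. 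Your diagnosis of where the analytic difficulty lies is accurate, but choosing a globally polynomial $W$ (rather than one that is already singular along $Z$) confines the singularity entirely to the infinite-time limit and is what makes the explicit flow formula and the estimates in Section~\ref{section: analysis} tractable.
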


Hence, $p_X^*(\CC^{\bullet})$ has the same cohomology as $\CC^{\bullet}$. However:

\begin{lemma}\label{lemma: closed}
For all $\alpha\in \CC^{\bullet}$, we have that \[\dif p_X^*(\alpha)=0.\] 
\end{lemma}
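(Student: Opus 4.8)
The plan is to show that $p_X^*(\alpha)$ always factors as $\dif f$ wedged with a form whose coefficients depend only on $f$; the exterior derivative then lands in the ideal generated by $\dif f\wedge\dif f=0$. First I would record the basic fact that $p_X$ retracts each leaf to its intersection with $X$ \emph{inside} that leaf, so the Casimir $f$ is preserved: a direct computation from \eqref{retract} gives $f\circ p_X=f$ on $\R^3\setminus Z$. Consequently $p_X^*(\dif f)=\dif(f\circ p_X)=\dif f$, and writing a general element of $\CC^{\bullet}$ as $\alpha=\dif f\wedge\beta$ with $\beta\in\Omega_0^{\bullet}(\R^3)^{S^1}$, we obtain $p_X^*(\alpha)=\dif f\wedge p_X^*(\beta)$ on $\R^3\setminus Z$. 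It therefore suffices to prove that $\dif f\wedge\dif\bigl(p_X^*(\beta)\bigr)=0$.

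Next I would compute $p_X$ in the leaf-adapted coordinates of \eqref{coordinates O} and \eqref{coordinates I}. Substituting $p_X$ from \eqref{retract} into $\theta=\tan^{-1}(y/x)$, $w=z$, $f=x^2+y^2-z^2$ shows that on $O$ the retraction is simply $(\theta,w,f)\mapsto(\theta,0,f)$, and likewise, using $\xi=\tanh^{-1}(y/z)$, $v=x$, on $I$ it is $(\xi,v,f)\mapsto(0,0,f)$. Hence $p_X^*$ acts by $\dif\theta\mapsto\dif\theta$, $\dif f\mapsto\dif f$, $\dif w\mapsto0$ on $O$, and by $\dif f\mapsto\dif f$, $\dif\xi\mapsto0$, $\dif v\mapsto0$ on $I$, while on functions it restricts to the skeleton by setting $w=0$ (resp. $\xi=v=0$).

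Now the $S^1$-invariance enters decisively. An invariant form $\beta$ has coefficients independent of $\theta$ in the coordinates \eqref{coordinates O}, so after applying $p_X^*$ (which sets $w=0$) every coefficient of $p_X^*(\beta)$ becomes a function of $f$ alone. On $O$ the only surviving differentials are $\dif\theta$ and $\dif f$, so $p_X^*(\alpha)=A(f)\,\dif f+B(f)\,\dif f\wedge\dif\theta$; since $\dif\bigl(A(f)\dif f\bigr)=A'(f)\,\dif f\wedge\dif f=0$ and the second term behaves identically, we get $\dif p_X^*(\alpha)=0$. On $I$ the only surviving differential is $\dif f$, so $p_X^*(\alpha)=A(f)\,\dif f$ and again $\dif p_X^*(\alpha)=0$. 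I would flag that invariance is genuinely needed: without it one finds $\dif p_X^*(\alpha)=\partial_\theta A\,\dif\theta\wedge\dif f$ on $O$, and it is precisely the $\theta$-independence of the coefficients that kills this term.

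Finally, the preceding establishes $\dif p_X^*(\alpha)=0$ on the open dense set $\R^3\setminus Z$. By Lemma \ref{infinite time pullback}, $p_X^*(\alpha)$ extends to a smooth (indeed flat) form on all of $\R^3$, so $\dif p_X^*(\alpha)$ is continuous on $\R^3$ and vanishes on a dense subset, hence vanishes identically. The only delicate point is the behaviour across the cone $Z$, where $p_X$ fails to be smooth; but this is entirely absorbed by the smooth extension supplied by Lemma \ref{infinite time pullback}, so the main obstacle is already dispatched and what remains is the short coordinate computation above.
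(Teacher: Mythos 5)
Your proof is correct, but it runs along a genuinely different track than the paper's. The paper's proof never computes $p_X$ in coordinates: it commutes $\dif$ past $p_X^*$ and observes that $p_X$ factors through the skeleton, so $\dif p_X^*(\alpha)$ is the pullback of the restriction of $\dif\alpha$ to $\{0\}\times\R$ (on $I$) or to $\R^2\times\{0\}$ (on $O$); since $\alpha\in\CC^k$ is a $(k+1)$-form, $\dif\alpha$ has degree $\geq 2$, and degree counting against the dimension of the skeleton kills everything except the single case $k=0$ on $O$, where $S^1$-invariance is invoked to write $\alpha|_{\R^2\times\{0\}}=h(r^2)\,\dif r^2$, which is closed. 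You instead compute the retraction explicitly in the leaf-adapted coordinates \eqref{coordinates O} and \eqref{coordinates I} --- $(\theta,w,f)\mapsto(\theta,0,f)$ and $(\xi,v,f)\mapsto(0,0,f)$ --- and use invariance in \emph{all} degrees to conclude that $p_X^*(\alpha)$ is a combination of $A(f)\,\dif f$ and $B(f)\,\dif f\wedge\dif\theta$, which is manifestly closed. Your route is more computational but buys something extra: it identifies the image of $p_X^*$ on $\CC^{\bullet}$ concretely, which is essentially the content the paper has to establish separately afterwards in the proof of Theorem \ref{flat foliated cohomology}; the paper's route is slicker for the lemma itself, needing invariance only once. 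Both arguments deal with the cone $Z$ identically, via the smooth extension of Lemma \ref{infinite time pullback} plus density of $\R^3\setminus Z$; the paper leaves that last continuity step implicit, whereas you spell it out, which is a genuine (if small) gain in completeness. One point worth making explicit in your write-up: the decomposition $\alpha=\dif f\wedge\beta$ with $\beta$ itself $S^1$-invariant and flat is legitimate because $\dif f$ is invariant and averaging preserves both the ideal and flatness, so the invariant part of the ideal coincides with $\dif f\wedge\Omega_0^{\bullet}(\R^3)^{S^1}$.
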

\begin{proof}
Let $\alpha\in \CC^{k}$. On $I$, we have that 
\[\dif p_X^*(\alpha)|_I=p_X^*(\dif \alpha|_{\{0\}\times \R})=0,\]
because $\dif \alpha$ is at least a 2-form. Similarly, if $k>0$, also on $O$ we have that:
\[\dif p_X^*(\alpha)|_O=p_X^*(\dif \alpha|_{\R^2\times\{0\}})=0.\]
For $k=0$, write $\alpha=\chi\dif f$. Note that $f|_{\R^2\times 0}=r^2$. Since $\chi$ is $S^1$-invariant, we can write $\chi|_{\R^2\times\{0\}}=h(r^2)$, for some flat function $h$; hence $\alpha|_{\R^2\times \{0\}}=h(r^2)\dif r^2$, and so $\dif \alpha|_{\R^2\times \{0\}}$=0. 
\end{proof}

We are ready now to prove Theorem \ref{flat foliated cohomology}.
\begin{proof}[Proof of Theorem  \ref{flat foliated cohomology}]
By Lemma \ref{lemma:averaging} and Proposition \ref{lemma: homotopies}, the subcomplex $p_X^*(\CC^{\bullet})$ computes $H^{\bullet}_0(\F,\nu^*)$. By Lemma \ref{lemma: closed}, the differential on this subcomplex is trivial, and so every class in $H^{\bullet}_0(\F,\nu^*)$ has a unique representative in $p_X^*(\CC^{\bullet})$. Thus it suffices to determine the image of $p_X^*$.\\
\noindent\underline{In degree $0$}: this follows from Proposition \ref{ses for flat things}.\\
\noindent\underline{In degree $1$}: let $\alpha\in \CC^1$. As in the proof of Lemma \ref{lemma: closed}, we have that $p_X^*(\alpha)|_I=0$. Note that $\dif f\wedge \dif\theta|_{\R^2\times\{0\}}=2\dif x\wedge\dif y$, therefore we can write uniquely $\alpha|_{\R^2\times \{0\}}=g\dif f\wedge \dif\theta|_{\R^2\times\{0\}}$, and since $\alpha$ is $S^1$-invariant and flat at $0$, we can further decompose $g=h(r^2)$, where $h\in C^{\infty}(\R)$ with $\supp h \subset [0,\infty)$. Thus, for $\eta=h\circ f\in C^{out}$, we obtain: 
\[p_X^*(\alpha)= \eta \dif f\wedge \dif \theta.\]
\noindent\underline{In degree $2$}: as in the proof of Lemma \ref{lemma: closed}, $p_X^*(\alpha)=0$ for any $\alpha\in \CC^2$.
\end{proof}




\subsection{Homotopy operators}\label{subsection: homotopy operators}
\begin{wrapfigure}{r}{5cm}
\vspace{-20pt}
 \includegraphics[scale=0.6]{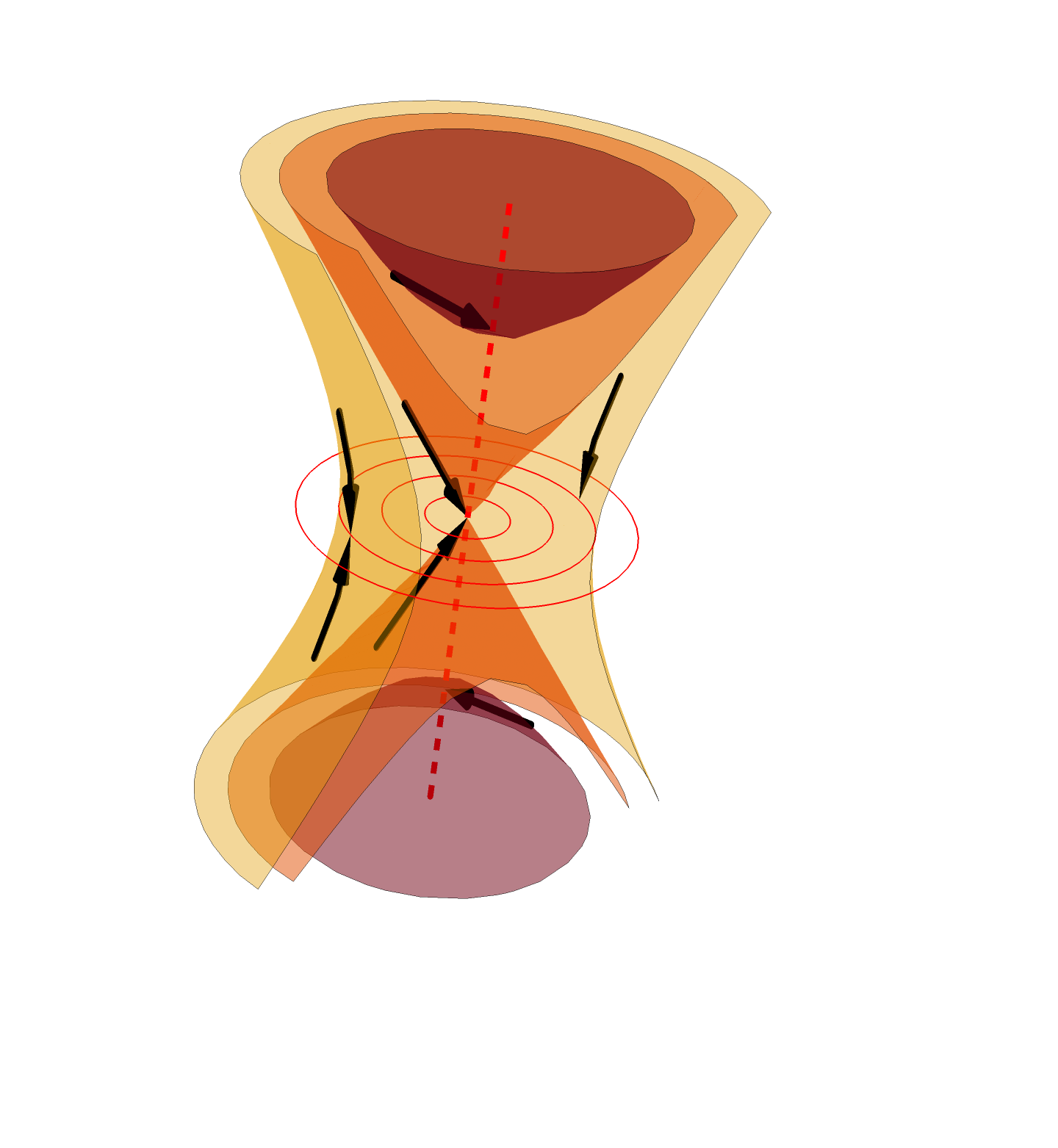}
 \vspace{-60pt}
 \caption{Flow of $W$ along the leaves to the ``cohomological skeleton''}
  \vspace{-80pt}
 \end{wrapfigure}

In order to construct the homotopy operators from Proposition \ref{lemma: homotopies}, we build a foliated homotopy between the identity map and the retraction $p_X$. We will do this using the flow of the vector field:
  \begin{align}\label{W}
   W=& -r^2z\pi^{\sharp}(\dif \theta)=-r^2 z\partial_z -z^2 r\partial_r\\
   =&-z^2x\partial_x -z^2y\partial_y -(x^2+y^2)z\partial_z\nonumber
  \end{align}
 where $r=\sqrt{x^2+y^2}$. Note that $W$ has the following properties:
 \begin{itemize}
     \item $W$ vanishes precisely on $X$,
     \item $W$ is tangent to the foliation:\\ $\Lie_Wf=0,$
     \item $W$ is $S^1$-invariant,
     \item $\Lie_W(\theta)=\dif \theta(W)=0$,
     \item $\Lie_W(R^2)=-4r^2z^2\leq 0$,\\ where $R^2=r^2+z^2.$
\end{itemize}
In particular $i_W$ preserves $\CC^{\bullet}$. 
The last property implies that the flow lines starting in a point inside the closed ball $\overline{B}_n(0)$ are trapped inside that ball; hence the flow is defined for all positive time, and will be denoted by:
  \begin{equation*}
  \begin{array}{cccc}
   \phi:&[0,\infty)\times \R^3 &\to&\R^3\\
   &(t,x,y,z)&\mapsto&\phi_t(x,y,z).
  \end{array}
  \end{equation*}
The above properties of $W$ imply that:
\begin{itemize}
\item $\phi_t$ fixes $X$;
\item $\phi_t^*(f)=f$; 
\item $\phi_t$ is $S^1$-equivariant;
\item $\phi_t^*(\dif \theta)=\dif \theta$.
\end{itemize}
In particular, $\phi_t^*$ preserves the complex $(\CC^{\bullet},\dif)$. By a similar calculation as \eqref{eq:pull back}, for all $\alpha\in \CC^{\bullet}$, we have that 
\begin{equation}\label{eq: at time t}
\phi_t^*(\alpha)-\alpha=\dif \circ h_t(\alpha)+h_t\circ \dif(\alpha),    
\end{equation}
where
\[h_t:\CC^{\bullet}\to \CC^{\bullet-1}, \ \ h_t(\alpha)=\int_0^{t}i_W\phi_s^*(\alpha)\dif s.\]
In order to prove Proposition \ref{lemma: homotopies} we will take the limit as $t\to\infty$ in \eqref{eq: at time t}. In the following subsection we will give explicit formulas for $\phi_t$, which imply the point-wise limit: 
\begin{align}\label{infinity now}
\lim_{t\to\infty}\phi_t(x,y,z)=p_X(x,y,z), \ \ \ \forall\  (x,y,z)\in \R^3.
\end{align}
The following results are much more involved, and their proofs will occupy the last section of the paper:
\begin{lemma}\label{convergence of the flow}
On $\R^3\backslash Z$, we have that \[\lim_{t\to\infty}\phi_t=p_X\] 
with respect to the compact-open $C^{\infty}$-topology.
\end{lemma}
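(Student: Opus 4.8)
The plan is to exploit the symmetries of $W$ to reduce the flow $\phi_t$ to a single scalar ODE that can be integrated in closed form, and then read off $C^\infty$-convergence directly from the explicit solution. Since $W$ is $S^1$-invariant and satisfies $\Lie_W f = 0$ and $\dif\theta(W) = 0$, both $f$ and $\theta$ are constant along the flow, so in cylindrical coordinates $(r,\theta,z)$ with $r=\sqrt{x^2+y^2}$ the flow is governed by $\dot r = -z^2 r$, $\dot z = -r^2 z$ with $f = r^2 - z^2$ conserved. I will also observe that it suffices to establish convergence separately on $O$ and on $I$: given a compact $K \subset \R^3\setminus Z = O \cup I$, the sets $K \cap O = K\setminus I$ and $K\cap I = K\setminus O$ are both compact (complements of open sets), and on each of them the conserved function $f$ is continuous and nowhere zero, hence bounded away from $0$. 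This lower bound on $|f|$ is what drives uniform exponential decay.

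On $O$ I will use the coordinates $(\theta,w,f)$ from \eqref{coordinates O}, where $w=z$, $r=\sqrt{f+w^2}$ and $f>0$. Here $\theta$ and $f$ are preserved, $p_X$ acts by $w\mapsto 0$, and the remaining equation $\dot w = -(f+w^2)w$ integrates by separation of variables to
\[
w(t) = \frac{\sqrt{f}\,\beta\, e^{-ft}}{\sqrt{1 - \beta^2 e^{-2ft}}}, \qquad \beta = \beta(w,f) := \frac{w}{\sqrt{f+w^2}} \in (-1,1).
\]
On a compact subset of $O$ one has $f \geq a > 0$ and $|\beta| \leq \beta_{\max} < 1$, so the denominator stays bounded away from $0$ uniformly in $t\geq 0$. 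Writing $w(t) = e^{-ft}\,\Phi(w,f,e^{-ft})$ with $\Phi$ smooth and bounded together with all its derivatives on the relevant domain, every space-derivative $\partial^\alpha_{(w,f)} w(t)$ is a finite sum of terms of the form $t^{j} e^{-ft}\,(\text{bounded})$ with $j\le |\alpha|$, since each $\partial_f$ landing on an exponential produces a factor $-t$. As $t^{j} e^{-ft} \leq t^{j} e^{-at} \to 0$ uniformly on $K\cap O$, this gives $\phi_t\to p_X$ in every $C^k$-norm, the $\theta$- and $f$-components agreeing with those of $p_X$ identically.

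On $I$ the limit lies on the $z$-axis $\{r=0\}$, where cylindrical coordinates degenerate, so I will express the flow in Cartesian coordinates. The same separation of variables, now applied to $\dot r = -(r^2 + |f|)r$ with $|f|=-f>0$, yields $r(t)/r_0$ as $e^{-|f|t}$ times a function smooth and bounded in $(r^2, f, e^{-2|f|t})$, whence
\[
\phi_t(x,y,z)\big|_I = \Big(\tfrac{r(t)}{r_0}\,x,\ \tfrac{r(t)}{r_0}\,y,\ \sign(z)\sqrt{r(t)^2 - f}\,\Big),
\]
with $r_0 = \sqrt{x^2+y^2}$. Crucially $r(t)/r_0$ depends on $(x,y)$ only through $r^2 = x^2+y^2$, so the factors $\tfrac{r(t)}{r_0}x$ and $\tfrac{r(t)}{r_0}y$ are genuinely smooth across the $z$-axis, and $r(t)^2 - f \geq -f \geq a > 0$ keeps the square root smooth. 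The $x$- and $y$-components decay like $e^{-|f|t}$ to $0$ and the $z$-component converges to $\sign(z)\sqrt{-f}$, and the identical $t^{j}e^{-at}$-domination argument upgrades this to $C^\infty$-convergence on $K\cap I$.

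The main obstacle is precisely the uniform control of derivatives as $t\to\infty$: differentiating the explicit flow repeatedly in the transverse variable $f$ brings down powers of $t$ from the exponent $e^{-ft}$ (resp.\ $e^{-|f|t}$), and these must be dominated by the decay uniformly over the compact set. This is exactly where the reduction to pieces on which $|f|\geq a>0$ is essential; without a lower bound on $|f|$ the rate would degenerate as one approaches $Z$, consistent with the fact that $\phi_t$ does \emph{not} converge smoothly up to $Z$ (the behaviour near $Z$ being the content of the separate, harder analysis). A secondary point requiring care is the coordinate degeneracy along the $z$-axis on $I$, handled above by checking smoothness in $r^2$ rather than in $r$.
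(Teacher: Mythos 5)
Your proof is correct, and it shares its two essential ingredients with the paper's: the closed-form solution of the flow ODE (your separation-of-variables formulas on $O$ and $I$ are exactly the formulas $r_t=rg_t$, $z_t=z\overline{g}_t$ of Lemma \ref{flow lines}, rewritten in adapted coordinates), and the fact that a compact subset of $\R^3\backslash Z$ has $|f|\geq \epsilon>0$, so that every term $t^j e^{-|f|t}$ produced by differentiating the exponentials decays uniformly. The execution, however, is genuinely different. The paper never leaves Cartesian coordinates: it establishes global polynomial-type bounds on all derivatives of $g_t$, $\overline{g}_t$ and of the flow components (Lemmas \ref{lemma:bounds on g} and \ref{lemma:bounds on the flow}, via the polynomials $\sigma_{k,l}$), then bounds the \emph{time} derivative, $\norm{\phi'_t}_k^{K_n^{\epsilon}}\leq C\, t^k e^{-\epsilon t}$, using $g_t\overline{g}_t\leq e^{-t|f|}$, and concludes via the Fr\'echet-space integrability criterion \eqref{conditions for limit}, identifying the limit as $p_X$ through the pointwise limit \eqref{infinity now}. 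You instead split into $O$ and $I$, integrate a single scalar ODE in foliation-adapted coordinates, and estimate $\phi_t-p_X$ and its derivatives directly, with no appeal to completeness. Your route is more elementary and self-contained for this particular lemma; what the paper's heavier machinery buys is reusability: the same $\sigma_{k,l}$-calculus is the backbone of the genuinely harder Lemma \ref{well-defined}, where there is no lower bound on $|f|$ and flatness of the forms along $Z$ must do the work instead --- your argument, anchored to $|f|\geq\epsilon$, does not extend there. Two points you use implicitly are standard but deserve a sentence: on $O$, that $C^{\infty}$-convergence of the coordinate representation $(\theta,w,f)\mapsto(\theta,w_t,f)$ implies $C^{\infty}$-convergence of the maps themselves (pre- and post-composition with the fixed charts is harmless, since the arguments stay in a compact region where $r\geq\sqrt{\epsilon}$), and on $I$, that $\sign(z_t)=\sign(z)$ for all $t$, which holds because $|z_t|\geq\sqrt{|f|}>0$ along the flow.
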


\begin{lemma}\label{well-defined}
For any $\alpha\in \Omega_{0}^{\bullet}(\R^3)$, the following limit exists:
   \begin{align}\label{limit h}
    h(\alpha):=\lim_{t \to \infty}{h_{t}(\alpha)}\in \Omega_{0}^{\bullet-1}(\R^3),
   \end{align}
with respect to the compact-open $C^{\infty}$-topology. 
\end{lemma}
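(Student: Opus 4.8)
The plan is to strip the pullback out of the contraction and then reduce the whole statement to a single integrable decay estimate. Because $\phi_s$ is the flow of $W$, the operator $i_W$ commutes with $\phi_s^*$: the identity $[\Lie_W,i_W]=i_{[W,W]}=0$ gives $\phi_s^*\circ i_W\circ \phi_{-s}^*=i_W$, whence $i_W\phi_s^*(\alpha)=\phi_s^*(\beta)$ with $\beta:=i_W\alpha$. Since $W$ from \eqref{W} has polynomial coefficients vanishing at the origin and $\alpha$ is flat at $0$, the form $\beta$ again lies in $\Omega_0^{\bullet-1}(\R^3)$. Thus $h_t(\alpha)=\int_0^t \phi_s^*(\beta)\,\dif s$, and the task reduces to showing that $\int_0^\infty \phi_s^*(\beta)\,\dif s$ converges in the compact-open $C^\infty$-topology. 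Note that $p_X^*(\beta)=0$ identically: $p_X$ maps into $X$, and $\beta$ vanishes at every point of $X$ because $W$ does; this is consistent with the integrand tending to $0$.

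Two reductions are then essentially free. First, each $\phi_s$ is the flow of a smooth vector field for finite time, so each $\phi_s^*(\beta)$ is a genuine smooth form on all of $\R^3$; hence a limit that is $C^k$ on every compact set for every $k$ is automatically smooth. Second, flatness of the limit at $0$ requires no extra work: since $W(0)=0$, the flow fixes the origin, so each $\phi_s^*(\beta)$ is flat at $0$; differentiating under the integral sign for finite $t$ gives $\partial^a h_t(\alpha)(0)=\int_0^t \partial^a(\phi_s^*\beta)(0)\,\dif s=0$ for every multi-index $a$, and this is preserved in the $C^{|a|}$-limit. Consequently it suffices to prove that for every compact $K\subset\R^3$ and every $k\ge 0$,
\[ \int_0^\infty \bigl\lVert \phi_s^*(\beta)\bigr\rVert_{C^k(K)}\, \dif s < \infty, \]
for then completeness of $C^k(K)$ makes $\{h_t(\alpha)\}$ Cauchy in $t$, converging to the (smooth, flat) pointwise limit already guaranteed by \eqref{infinity now}.

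The crux is therefore a quantitative, integrable-in-$s$ bound on these norms, uniform on compacts, which I would extract from the explicit description of the flow developed in Section \ref{section: analysis}. The mechanism is transparent on each leaf: in the coordinates \eqref{coordinates O}--\eqref{coordinates I} the transverse dynamics is the scalar ODE $\dot\rho=-(\rho^2+|f|)\rho$, with $\rho=z$ on $O$ and $\rho=r$ on $I$, whose fixed point $\rho=0$ has linear contraction rate $|f|$. Off the cone this yields exponential decay of both $\phi_s$ and its Jacobian at rate $\sim|f|$, and the extra factor of $W$ carried by $\beta=i_W\alpha$ supplies a further power of $\rho(s)$; together they make $\lVert\phi_s^*(\beta)\rVert_{C^k(K)}$ decay like $e^{-c|f|s}$ on compacts of $\R^3\setminus Z$, refining the qualitative statement of Lemma \ref{convergence of the flow}. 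On the cone itself the rate degenerates to the polynomial law $\rho(s)\sim s^{-1/2}$, but the accumulated contraction of the Jacobian keeps the integrand of integrable polynomial order, so convergence persists there as well.

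The main obstacle is to control this trade-off \emph{uniformly as one crosses the cone} $Z$, where the rate $|f|$ vanishes so that the naive bound $\int_0^\infty e^{-c|f|s}\,\dif s\sim 1/|f|$ blows up, and to do so simultaneously for all derivative orders $k$, precisely across the locus where $\phi_s$ limits to the non-smooth map $p_X$. The resolution is that $\beta$ is flat at the origin: near $0$ every coefficient vanishes to infinite order, which dominates any negative power of $|f|$ and produces integrable $C^k$-bounds in a neighborhood of the origin, while away from the origin the cone stays a positive distance from $0$ and the on-cone polynomial decay takes over. Making this interplay between flatness and the degenerating contraction rate precise at the level of every $C^k$-norm, by differentiating the explicit flow and tracking the resulting estimates, is the technical heart of the matter and is exactly what Section \ref{section: analysis} supplies; granting those estimates, the completeness argument above closes the proof, and the same bounds near $0$ identify the limit as an element of $\Omega_0^{\bullet-1}(\R^3)$.
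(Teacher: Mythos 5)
Your reduction is sound and in fact matches the paper's own strategy: the identity $i_W\phi_s^*(\alpha)=\phi_s^*(i_W\alpha)$ is also the starting point of the paper's proof of Lemma \ref{derivative of h}, and the criterion you invoke (integrability in $s$ of the $C^k(K)$-norms of $\phi_s^*(i_W\alpha)$, plus completeness of the Fr\'echet space, plus the observation that flatness at $0$ passes to the limit) is exactly \eqref{conditions for limit}. The gap is in the step you yourself call the crux, and your sketch of it is wrong in its two key claims. First, there is no ``accumulated contraction of the Jacobian'' near $Z$: since $\phi_t\to p_X$ and $p_X$ fails to be Lipschitz along $Z$ (its derivative blows up like $|f|^{-1/2}$), the Jacobian of $\phi_t$ must \emph{grow} in $t$ near the cone. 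Concretely, on $Z\setminus\{0\}$ one has $g_t=\overline{g}_t=(1+2tz^2)^{-1/2}$, and the derivatives of the flow satisfy $|D^a\phi_t|\sim t^{|a|-1/2}$, while the cubic vanishing of $W$ only contributes $|\phi_t|^3\sim t^{-3/2}$; combining these, the naive on-cone bound for the $C^1$-norm of $\phi_t^*(i_W\alpha)$ is at best $O(t^{-1/2})$ (and $O(1)$ once Jacobian factors of the form enter), hence not integrable, and matters get worse with each further derivative. Second, flatness at the origin cannot be localized to ``a neighborhood of the origin'': it is needed along the \emph{whole} cone, and the reason it helps there is not that the cone is close to $0$, but that the flow drives the entire cone into the origin, $\phi_t(v)\to 0$ for every $v\in Z$.

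This is precisely the mechanism the paper formalizes and which your proposal is missing: Taylor-expanding the flat coefficients at $0$ and composing with $\phi_t=(xg_t,yg_t,z\overline{g}_t)$ extracts arbitrarily many factors $(g_t+\overline{g}_t)^d$ valid on all of $K$ (Lemma \ref{pullback estimates}); the contraction $i_W$ supplies the mixed pair $g_t\,\overline{g}_t$; and the key inequality $g_t^p\,\overline{g}_t^q\leq C R^{-(p+q)}t^{-(p+q)/2}$ for $p,q>0$ (Lemma \ref{lemma: finite integrals}) --- which holds uniformly across $Z$, where no exponential rate is available --- converts those factors into polynomial decay in $t$ that beats the $t^{k}$-growth coming from differentiating the flow. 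Choosing the flatness order $d=k+i-1$ yields $\norm{h_t'(\alpha)}^K_{k}\leq C\norm{\alpha}^K_{2k+i-1}\,t^{-3/2}$ (Lemma \ref{derivative of h}), which is the integrable bound your completeness argument needs. Without this interplay --- flatness at the origin exploited globally, because the cone flows into the origin --- your case division (flatness near $0$, ``on-cone polynomial decay'' near $Z\setminus\{0\}$) does not produce integrable $C^k$-bounds for any $k\geq 1$, so the heart of the lemma remains unproved.
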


Recall that the existence of a limit with respect to the compact-open $C^{\infty}$-topology means that all partial derivatives converge uniformly on compact subsets; more details are given in the following section. 

The results above suffice to complete our proofs:

\begin{proof}[Proofs of Lemma \ref{infinite time pullback} and Proposition \ref{lemma: homotopies}]
Since the limit \eqref{limit h} is uniform on compact subsets with respect to all $C^k$-topologies, $h$ satisfies
   \begin{align*}
    \dif h(\alpha)&=\lim_{t \to \infty}{\dif h_{t}(\alpha)}.
   \end{align*}
From \eqref{eq: at time t}, we obtain that for any $\alpha\in \Omega_0^{\bullet}(\R^3)$
\[\lim_{t\to\infty}\phi_t^*(\alpha)=\alpha+\dif\circ h(\alpha)+h\circ \dif (\alpha)\]
holds for the compact-open $C^{\infty}$-topology. On the other hand, on $\R^3\backslash Z$, $\lim_{t\to\infty}\phi_t=p_X$, and since this limit is also with respect to the compact-open $C^{\infty}$-topology, we have that 
\[\lim_{t\to\infty}\phi_t^*(\alpha)|_{\R^3\backslash Z}=p_X^*(\alpha)|_{\R^3\backslash Z}.\]
Therefore, $p_X^*(\alpha)|_{\R^3\backslash Z}$ extends to a smooth form on $\R^3$. This implies Lemma \ref{infinite time pullback} and the equation:
\[p_X^*(\alpha)-\alpha=\dif\circ h(\alpha)+h\circ \dif (\alpha).\]
Finally, since $h_t$ is $S^1$-equivariant and commutes with $e_{\dif f}$, and these conditions are closed, we have that $h(\CC^{\bullet})\subset \CC^{\bullet-1}$. Thus, the above relation holds on $\CC^{\bullet}$, and so we obtain also Proposition \ref{lemma: homotopies}.
\end{proof}

\subsection{Explicit formula for the flow}\label{subsection:explicit flow}
 
Recall that in cylindrical coordinates $W=-(r^2z\partial_z+z^2r\partial_r)$. Therefore, its flow $\phi_t(r,\theta,z)=(r_t,\theta_t,z_t)$ satisfies 
\[r'_t=-r_tz^2_t,\ \  \ \ \theta'_t=0,\ \ \ \ z'_t=-z_tr^2_t.\]
As remarked before, $\theta_t=\theta$. The above system is equivalent to
\begin{equation*}
(r^2_t)'=-2r^2_tz^2_t,\ \  \ \ (z^2_t)'=-2r^2_tz^2_t.
\end{equation*}
Note that $(r^2_t-z_t^2)'=0$, hence, as remarked before, $f=r^2-z^2$ is constant along the flow lines. Therefore, the system above is equivalent to a single ODE in $R_t^2=r^2_t+z^2_t$. Solving this ODE, we obtain the explicit formulas:
\begin{align}\label{explicit_flow}
r_t&=r\sqrt{\frac{r^2-z^2}{r^2-z^2e^{-2t(r^2-z^2)}}}=\frac{r}{\sqrt{1+tz^2\kappa(tf)}},\\ 
z_t&=z\sqrt{\frac{z^2-r^2}{z^2-r^2e^{2t(r^2-z^2)}}}=\frac{z}{\sqrt{1+tr^2\kappa(-tf)}}, \nonumber
\end{align}
where we have denoted by $\kappa$ the following smooth function:
  \begin{equation*}
   \begin{array}{cccc}
   \kappa:&\R&\to&\R_{\ge 0}\\
   &x&\mapsto&\int_0^2e^{-sx}\dif s=\frac{1-e^{-2x}}{x}
   \end{array}
\end{equation*}
These formulas give the point-wise limit $\lim_{t\to\infty}\phi_t=p_X$ claimed in \eqref{infinity now}: 
\begin{equation*}
\lim_{t\to\infty}(r_t,z_t)=\begin{cases}
    (\sqrt{f},0)&\text{ if }\  0\leq f\\
    (0,\sign(z)\sqrt{-f})&\text{ if }\  f\leq 0
   \end{cases}.
\end{equation*}

In Cartesian coordinates, we obtain:
\begin{lemma}\label{flow lines}
For $t\geq 0$, the flow $\phi_t(x,y,z)=(x_t,y_t,z_t)$ of $W$ is given by 
\begin{align}\label{flow in Cartesian}
x_t&= \frac{x}{\sqrt{1+tz^2\kappa(tf)}},\nonumber \\
y_t&= \frac{y}{\sqrt{1+tz^2\kappa(tf)}},\\
z_t&= \frac{z}{\sqrt{1+tr^2\kappa(-tf)}}.\nonumber 
\end{align}
\end{lemma}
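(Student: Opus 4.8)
The plan is to obtain the Cartesian formulas by converting the cylindrical expressions in \eqref{explicit_flow} that were derived just above. The essential observation is that the flow preserves the angular coordinate: since $\theta'_t=0$, we have $\theta_t=\theta$ for all $t$. Away from the $z$-axis $\{x=y=0\}$, this means the flow moves each point only radially and in $z$, so that
\[x_t=r_t\cos\theta=\frac{r_t}{r}\,x,\qquad y_t=r_t\sin\theta=\frac{r_t}{r}\,y,\]
while $z_t$ is unchanged from its cylindrical expression. Substituting $r_t=r/\sqrt{1+tz^2\kappa(tf)}$ from \eqref{explicit_flow} gives $r_t/r=1/\sqrt{1+tz^2\kappa(tf)}$, which yields exactly the stated expressions for $x_t$ and $y_t$; the expression for $z_t$ is copied verbatim from \eqref{explicit_flow}.

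First I would dispose of the degenerate locus. On the $z$-axis $W$ vanishes (since $r=0$), so $\phi_t$ fixes these points, and the proposed formulas correctly give $x_t=y_t=0$ and $z_t=z$ there. One should also note that $\kappa$ is smooth on all of $\R$ (with $\kappa(0)=2$), so the right-hand sides of \eqref{flow in Cartesian} are genuinely smooth across the cone $Z=\{f=0\}$, and the identity extends there by continuity. Since the real content was already carried out in deriving \eqref{explicit_flow}, this step is essentially bookkeeping and I expect no genuine obstacle.

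As a self-contained alternative that sidesteps the coordinate degeneration, I would instead verify directly that the functions in \eqref{flow in Cartesian} solve the flow equations of $W$ in the form \eqref{W}, namely $\dot{x}_t=-z_t^2 x_t$, $\dot{y}_t=-z_t^2 y_t$, and $\dot{z}_t=-(x_t^2+y_t^2)z_t$, with initial condition $(x_0,y_0,z_0)=(x,y,z)$; uniqueness of integral curves of the smooth, complete vector field $W$ then finishes the argument. Writing $a(t)=1+tz^2\kappa(tf)$ and $b(t)=1+tr^2\kappa(-tf)$, the identity $s\kappa(s)=1-e^{-2s}$ gives $\dot{a}=2z^2e^{-2tf}$ and $\dot{b}=2r^2e^{2tf}$, and the entire verification reduces to the single algebraic relation $b=a\,e^{2tf}$, which in turn follows from $f=r^2-z^2$. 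This is the only computation requiring any care, and it is short.
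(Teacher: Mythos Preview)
Your proposal is correct and matches the paper's approach: the paper derives the cylindrical formulas \eqref{explicit_flow} and then simply states ``In Cartesian coordinates, we obtain:'' followed by the lemma, with no further proof. Your first paragraph makes this conversion explicit via $x_t=(r_t/r)x$, $y_t=(r_t/r)y$, and you are in fact more careful than the paper in handling the $z$-axis and the smoothness across $Z$; your alternative direct verification is a bonus the paper does not include.
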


\section{The analysis}\label{section: analysis}
  
In this last section we prove Lemmas \ref{convergence of the flow} and 
\ref{well-defined}.

\subsection{Partial derivatives, Leibniz rule, chain rule}

Denote the partial derivative corresponding to a multi-index \[a=(a_1,\ldots,a_n)\in \N^n \ \ \textrm{by}\ \  D^{a}: = \frac{1}{a_1!}\partial^{a_1}_{x_1}\ldots \frac{1}{a_n!}\partial^{a_n}_{x_n}.\] 
We will often use the general Leibniz rule:
\[D^a(f_1\cdot \ldots\cdot f_k)=\sum_{a^1+\ldots+a^k=a}D^{a^1}(f_1)\cdot\ldots \cdot D^{a^k}(f_k),\]
for $f_1,\ldots,f_k\in C^{\infty}(\R^n)$, and the general chain rule:
\[D^a(g(f_1,\ldots,f_k))=\sum_{1\leq |b|\leq |a|}D^b(g)(f_1,\ldots,f_k){\sum}' \prod_{i=1}^k\prod_{j=1}^{b_i} D^{a^{ij}}(f_i),\]
where $b=(b_1,\ldots,b_k)$ and $\sum'$ is the sum over all non-trivial decompositions  \[a=\sum_{i=1}^k\sum_{j=1}^{b_i}a^{ij}, \ \ \  a, a^{ij}\in \N^n.\]

\subsection{Limits of families of smooth functions}\label{subsection: limits}

We discuss some standard facts about the existence of limits of families of smooth functions. 

Let $U\subset \R^n$ be an open set. For a compact subset $K\subset U$, we define the corresponding $C^k$-semi-norm on $C^{\infty}(U,\R^m)$ as:
\[\norm{F}^K_k:=\sum_{|a|\leq k}\sup_{x\in K}\left|D^aF(x)\right|,\]
where $|a|=a_1+\ldots+a_n$. The semi-norms 
\[\big\{\, \norm{\cdot}_k^K\ | \ k\geq 0,\  K\subset U \big\},\]
endow $C^{\infty}(U,\R^m)$ with the structure of a Fr\'echet space, and the resulting topology is called the compact-open $C^{\infty}$-topology. 

Consider a family $F_t\in C^{\infty}(U,\R^m)$, defined for $t\geq t_0$. Assume that, for each compact $K\subset U$ and each $k\geq 0$, we find a function 
\[l^K_{k}:[t_0,\infty)\to [0,\infty),\]
such that
\[\forall \  s\geq t\ :\ \ \norm{F_s-F_t}^K_{k}\leq l^K_{k}(t) \ \ \ \ \textrm{and} \ \ \ \ \lim_{t\to \infty}l_k^K(t)=0.\]
Then, since $C^{\infty}(U,\R^m)$ is a Fr\'echet space, the limit $t\to \infty$ exists: \[F_{\infty}:=\lim_{t\to \infty}F_{t}\in C^{\infty}(U,\R^m).\]
So all partial derivatives of $F_t$ convergence uniformly on all compact subsets to those of $F_{\infty}$.

Assume further that $F_t$ is smooth also in $t$. Writing  
\[D^a(F_t-F_s)=\int_s^tD^a(F'_h)\dif h,\]
we obtain that 
\[\norm{F_s-F_t}^K_{k}\leq \int_s^t\norm{F'_h}_k^K\dif h.\]
So, if we find a function 
\[b^K_{k}:[t_0,\infty)\to [0,\infty),\]
such that
\begin{equation}\label{conditions for limit}
\forall \  t\geq t_0\ :\ \ \norm{F'_t}^K_{k}\leq b^K_{k}(t)\ \ \ \ \textrm{and} \ \ \ \ \int_{t_0}^{\infty}b_k^K(t)\dif t<\infty,
\end{equation}
then we can conclude that $\lim_{t\to\infty} F_t$ exists. We will use this criterion in the following subsections. 

\subsection{Polynomial-type estimates}\label{subsection: flow estimates}

In the following subsections, we will prove several inequalities, and in order to keep track of what is essential, we discuss here the nature of these estimates. The following two families of functions play a key role:
\[g_t, \overline{g}_t:\R^3\to (0,1],\ \ \ t\in [0,\infty),\]
\[g_t:=\frac{1}{\sqrt{1+tz^2\kappa(tf)}},\ \ \  \  \overline{g}_t:=\frac{1}{\sqrt{1+tr^2\kappa(-tf)}}.\]
These functions appeared in the explicit formula for the flow $\phi_t=(x_t,y_t,z_t)$
\[x_t=xg_t, \ \ \ y_t=yg_t,\ \ \ r_t=rg_t\ \ z_t=z\overline{g}_t.\]
Note that they satisfy the following property: 
\begin{equation}\label{g_t}
g_t=\overline{g}_te^{tf},
\end{equation}

As discussed in the previous subsection, given a smooth family 
$F_t\in C^{\infty}(\R^3)$, $t\geq 0$, in order to show that $\lim_{t\to \infty}F_t$ exists, we need to estimate the partial derivatives of $F'_t$. We will find bounds which are polynomials in the variables $R$, $t$, $g_t$ and $\overline{g}_{t}$, and obtain inequalities of the form:
\begin{equation}\label{polynomial bounds}
|D^a F'_t|\leq C\sum R^d\, t^k\, g_t^p\, \overline{g}_{t}^q, \ \ \ \ \ \ \forall\ (x,y,z)\in \R^3, \ \ t\geq 0,
\end{equation}
where $C>0$ is a constant, $R=\sqrt{x^2+y^2+z^2}$, and the sum is over a finite set of degrees $(d,k,p,q)$. Which degrees actually appear in this sum will play a crucial role. Namely, note first that, by \eqref{g_t}, $g_t$ is rapidly decreasing on $I$ and $\overline{g}_t$ is rapidly decreasing on $O$. So, for $p>0$ and $q>0$, $R^d\, t^k\,  g^p_t\,\overline{g}_t^q$ goes rapidly to zero away from $Z$. However, higher exponents $d$, $p$ and $q$ are needed to obtain estimates as in \eqref{conditions for limit} also along $Z$. For this we will use the following:

\begin{lemma}\label{lemma: finite integrals}
For $p>0$ and $q>0$ there is $C=C(p,q)$ such that:
\[g_t^p\, \overline{g}_t^q\leq C R^{-(p+q)}t^{-\frac{p+q}{2}},\]
for all $t>0$ and all $(x,y,z)\neq 0\in \R^3$.
\end{lemma}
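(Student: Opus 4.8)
The plan is to collapse this three-parameter estimate to a one-dimensional elementary inequality, using a scaling symmetry together with the closed forms of $g_t^{-2}$ and $\overline{g}_t^{-2}$. First I would record the identities
\[ g_t^{-2}=1+tz^2\kappa(tf)=\frac{r^2-z^2e^{-2tf}}{f},\qquad \overline{g}_t^{-2}=1+tr^2\kappa(-tf)=\frac{r^2e^{2tf}-z^2}{f},\]
which follow from $\kappa(x)=(1-e^{-2x})/x$ and $f=r^2-z^2$ (with $r^2=x^2+y^2$). Writing $P:=\overline{g}_t^{-2}$, the relation \eqref{g_t}, i.e.\ $g_t=\overline{g}_t e^{tf}$, gives $g_t^{-2}=Pe^{-2tf}$, and hence the clean factorization
\[ g_t^p\,\overline{g}_t^q=P^{-(p+q)/2}\,e^{ptf}. \]
Setting $s:=p+q$ and $\alpha:=p/s\in(0,1)$, the asserted bound $g_t^p\overline{g}_t^q\le CR^{-s}t^{-s/2}$ becomes \emph{equivalent} to the single lower bound
\[ P=\frac{r^2e^{2tf}-z^2}{f}\ \ge\ c\,(R^2t)\,e^{2\alpha tf},\qquad c:=C^{-2/s};\]
I will call this inequality $(\star)$.

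The next step removes two of the three parameters. Note that $(\star)$ is invariant under the scaling $(x,y,z,t)\mapsto(\lambda x,\lambda y,\lambda z,\lambda^{-2}t)$, since $tf$, $tz^2$, $tr^2$, $g_t$, $\overline{g}_t$ and $R^2t$ are all unchanged; taking $\lambda=1/R$ I may assume $R^2=r^2+z^2=1$, with $t\in(0,\infty)$ still free. Introducing $\lambda:=tf$ and treating $f>0$ and $f<0$ separately (the case $f=0$, where $P=1+2tr^2$, is immediate or follows by continuity), I substitute $t=\lambda/f$ and divide by $e^{2\alpha\lambda}$. On $f>0$, where $r^2>z^2$ forces $r^2\ge\tfrac12$, this turns $(\star)$ into the one-variable statement
\[ G(\lambda):=r^2e^{2\beta\lambda}-z^2e^{-2\alpha\lambda}\ \ge\ c\,\lambda\qquad(\lambda\ge 0),\quad \beta:=1-\alpha=q/s,\]
while on $f<0$, where $z^2\ge\tfrac12$, the substitution $\mu=-\lambda$ together with $r\leftrightarrow z$, $\alpha\leftrightarrow\beta$ yields the mirror inequality $\widetilde G(\mu):=z^2e^{2\alpha\mu}-r^2e^{-2\beta\mu}\ge c\mu$ for $\mu\ge 0$.

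Finally I would dispatch the reduced inequality in one line: since $G(0)=r^2-z^2\ge 0$ and, for $\lambda\ge 0$,
\[ G'(\lambda)=2\beta r^2e^{2\beta\lambda}+2\alpha z^2e^{-2\alpha\lambda}\ \ge\ 2\beta r^2e^{2\beta\lambda}\ \ge\ 2\beta r^2\ \ge\ \beta\]
(dropping the positive second term, then using $e^{2\beta\lambda}\ge 1$ and $r^2\ge\tfrac12$), integration gives $G(\lambda)\ge\beta\lambda$; the mirror inequality gives $\widetilde G(\mu)\ge\alpha\mu$ identically. Hence $(\star)$ holds with $c=\min(\alpha,\beta)=\min(p,q)/(p+q)$, so the lemma follows with $C=\bigl(\min(p,q)/(p+q)\bigr)^{-(p+q)/2}$, depending only on $p$ and $q$.

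I expect the only real difficulty to be organizational, namely finding this reformulation. The naive estimates available here—such as $\overline{g}_t^{-2}\ge tR^2$ on $\{f\ge0\}$ (from $\kappa(-tf)\ge 2$ and $r^2\ge R^2/2$), or $g_t^{-2}\overline{g}_t^{-2}\ge 4t^2r^2z^2$ (from $\kappa(tf)\kappa(-tf)=4\sinh^2(tf)/(tf)^2\ge 4$)—are individually too weak uniformly near the cone $Z=\{f=0\}$ and near the axes, precisely because they discard the exponential factor $e^{ptf}$. The content of the argument is the recognition that retaining this factor, combined with scale invariance, reduces everything to the trivial monotonicity of $G$.
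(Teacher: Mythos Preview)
Your proof is correct and in fact yields the same constant $C=\bigl((p+q)/\min(p,q)\bigr)^{(p+q)/2}$ as the paper. The overall strategy also coincides: both arguments use the identity $g_t=\overline{g}_te^{tf}$ to rewrite $g_t^p\overline{g}_t^q$ in terms of a single quantity, then reduce to an elementary scalar inequality, treated separately on $\{f\ge0\}$ and $\{f\le0\}$. The implementations differ, however. The paper isolates the auxiliary inequality
\[\frac{\epsilon}{1+v\kappa(s)}\,e^{-2\epsilon s}\le\frac{1}{1+2(v+s)}\qquad(0\le\epsilon\le1,\ s,v\ge0),\]
proves it by observing that the difference is affine in $v$ and checking the value and $v$-slope at $v=0$, and then substitutes $s=tf$, $v=tz^2$, $\epsilon=q/(p+q)$ on the region $|z|\le r$. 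You instead exploit the scaling symmetry $(x,y,z,t)\mapsto(\lambda x,\lambda y,\lambda z,\lambda^{-2}t)$ to normalize $R=1$, pass to the single variable $\lambda=tf$, and dispatch $G(\lambda)=r^2e^{2\beta\lambda}-z^2e^{-2\alpha\lambda}\ge c\lambda$ by integrating the derivative bound $G'\ge\beta$. The scaling observation is a clean simplification the paper does not make, and your one-line derivative argument is arguably more transparent than the affine-in-$v$ verification; conversely, the paper's packaging of the key step as a stand-alone inequality in $(s,v,\epsilon)$ makes the role of $\kappa$ explicit.
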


\begin{proof}
First we prove that, for $0\leq \epsilon \leq 1$, $0\le s$, $0\leq v$, the following holds:
\begin{equation}\label{bounded function}
   \frac{\epsilon}{1+v\kappa(s)} e^{- 2\epsilon s}\le \frac{1}{1+2(v+s)}.
\end{equation}
This is equivalent to 
\[l(v,s):=(s+v(1-e^{-2s}))e^{2\epsilon s}-\epsilon s(1+2(v+s))\geq 0.\]
Since $l$ is linear in $v$, we need to check that $l(0,s)\geq 0$ and $\partial_vl(0,s)\geq 0$:
\begin{align*}
l(0,s)&=s\big(e^{2\epsilon s}-\epsilon (1+2s)\big)\\
 &\geq
s\big(1+2\epsilon s-\epsilon (1+2s)\big)\\
&=s(1-\epsilon)\geq 0,\\
\partial_vl(0,s)&=(1-e^{-2s})e^{2\epsilon s}-2\epsilon s\\
&=e^{2\epsilon s}-e^{-2(1-\epsilon)s}-2\epsilon s\\
&\geq e^{2\epsilon s}-1-2\epsilon s\geq 0.
\end{align*}

Next, we prove the statement in the case $0\leq |z|\leq r$, the other case follows similarly. Using \eqref{g_t}, we write:
\[g_t^p\,\overline{g}_t^q=g_t^{p+q}e^{-tqf}=\left(g_t^2e^{-t\frac{2q}{p+q}f}\right)^{\frac{p+q}{2}}.\]
By applying \eqref{bounded function} with $s:=t(r^2-z^2)$, $v:=tz^2$ and $\epsilon:=\frac{q}{p+q}$, we obtain:
\[
g_t^2e^{-t\frac{2q}{p+q}f}=\frac{e^{-2\epsilon s}}{1+v\kappa(s)}\leq \epsilon^{-1}\frac{1}{1+2(s+v)}= \frac{p+q}{q}\frac{1}{1+2tr^2},
\]
and so: 
\[g_t^p\, \overline{g}_t^q\leq \left(\frac{p+q}{q}\right)^{\frac{p+q}{2}} (1+2tr^2)^{-\frac{p+q}{2}}.\]
Using that $t R^2\leq (1+2 t r^2)$ we obtain the inequality from the statement.
\end{proof}

For the estimates that will follow, we introduce polynomials $\sigma_{k,l}$ defined for $k\in \N$ and $l\in \Z$ as the sum of all monomials $t^jR^d$ with  
\[d-2j=l \ \  \textrm{and}\ \ 0\leq j\leq k,\]
or, in a closed formula: 
\[\sigma_{k,l}=\sum_{m\leq j\leq k} t^jR^{2j+l}, \ \ m:=\mathrm{max}\big(0,-l/2 \big),\]
and we set $\sigma_{k,l}=0$ if $m> k$ and $\sigma_{0,0}=1$.

By comparing terms, the following is immediate: 
\begin{equation}\label{submult}
\sigma_{k,l}\, \sigma_{k',l'}\leq C \sigma_{k+k', l+l'},
\end{equation}
for some constant $C=C(k,k',l,l')$. In particular:
\begin{equation*}
R^d\sigma_{k,l}=\sigma_{0,d}\sigma_{k,l}\leq \sigma_{k, l+d}.
\end{equation*}
We will use these inequalities later on. 

\subsection{Estimates for $g_t$ and $\overline{g}_t$}

Here we will evaluate the partial derivatives of $g_t$ and $\overline{g}_t$. Let 
\[g(u,v):=\frac{1}{\sqrt{1+v\kappa(u-v)}}, \ \ \ \  u,v\geq 0.\]
First, we prove an intermediate result about the function $g$.

\begin{lemma}\label{lemma gg}
For every $a\in \N^2$ there exists $C=C(a)$ such that:
\begin{equation}\label{gg}
    \lvert D^a g(u,v)\rvert \leq C g(u,v), \ \ \forall \ u,v\geq 0.
\end{equation}    
\end{lemma}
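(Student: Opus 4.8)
The plan is to reduce everything to a single estimate on $\kappa$ and then propagate it through the algebraic structure of $g$. Write $P(u,v):=1+v\kappa(u-v)$, so that $g=P^{-1/2}$ and $P\ge 1$ (hence $g\in(0,1]$). The decisive observation about $\kappa$ comes from its integral representation $\kappa(x)=\int_0^2 e^{-sx}\,\dif s$: differentiating under the integral gives $\kappa^{(n)}(x)=\int_0^2(-s)^n e^{-sx}\,\dif s$, whence, using $s\le 2$ on the interval of integration,
\[
\lvert\kappa^{(n)}(x)\rvert\le 2^n\int_0^2 e^{-sx}\,\dif s=2^n\kappa(x),\qquad \forall\, x\in\R.
\]
I also record that $\kappa>0$ everywhere and that $\kappa$ is strictly decreasing, since $\kappa'(x)=-\int_0^2 s\,e^{-sx}\,\dif s<0$.

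The core step is to prove $\lvert\partial_u^{a_1}\partial_v^{a_2}P\rvert\le C_a P$ for all $u,v\ge 0$. Since $P-1=v\kappa(u-v)$ is linear in $v$, at most one $v$-derivative can fall on the factor $v$, and an explicit Leibniz computation gives, for $|a|\ge 1$,
\[
\partial_u^{a_1}\partial_v^{a_2}P=(-1)^{a_2}\,v\,\kappa^{(a_1+a_2)}(u-v)+(-1)^{a_2-1}a_2\,\kappa^{(a_1+a_2-1)}(u-v),
\]
the second term being absent when $a_2=0$. The $\kappa$-estimate bounds the first term by $2^{a_1+a_2}v\kappa(u-v)=2^{a_1+a_2}(P-1)\le 2^{a_1+a_2}P$. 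For the second term I must bound the lone $\kappa(u-v)$ by a constant multiple of $P$, and this is the only place requiring care. I would split into cases: if $v\ge 1$ then $\kappa(u-v)\le v\kappa(u-v)\le P$; if $v<1$ then $u-v>-1$ (as $u\ge 0$), so monotonicity of $\kappa$ yields $\kappa(u-v)\le\kappa(-1)\le\kappa(-1)\,P$. Combining, $\kappa(u-v)\le\max(1,\kappa(-1))\,P$, and the bound $\lvert\partial^a P\rvert\le C_a P$ follows.

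Finally I would apply the general chain rule (Fa\`a di Bruno) to $g=\phi\circ P$ with $\phi(t)=t^{-1/2}$, for which $\phi^{(m)}(t)=c_m t^{-1/2-m}$. Every resulting term has the shape $c_m\,P^{-1/2-m}\prod_{j=1}^{m}\partial^{a^{j}}P$, where $a=\sum_{j=1}^m a^{j}$ is a decomposition into nontrivial multi-indices and $1\le m\le|a|$. Bounding each factor by $\lvert\partial^{a^{j}}P\rvert\le C_{a^{j}}P$ makes the product at most $C\,P^{m}$, so each term is at most $c_m C\,P^{-1/2}=c_m C\,g$; summing the finitely many terms (and treating $a=0$ trivially) gives $\lvert D^a g\rvert\le C(a)\,g$, as claimed. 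The genuine obstacle is confined to the lone-$\kappa(u-v)$ term in the middle step; once the uniform estimate $\lvert\partial^a P\rvert\le C_a P$ is in hand, the conclusion is a formal consequence of it together with the fact that each differentiation of $\phi(t)=t^{-1/2}$ merely lowers the exponent by one.
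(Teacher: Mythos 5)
Your proof is correct, and it resolves the crux of the lemma by a genuinely different device than the paper. Both arguments share the same skeleton: bound all partial derivatives of the denominator $P(u,v)=1+v\kappa(u-v)$ (the paper's $\iota$) by a constant times $P$ itself, using $|\kappa^{(n)}|\le 2^n\kappa$, and then push this through the chain rule for $t\mapsto t^{-1/2}$, under which each differentiation only lowers the exponent. The difference lies in how the troublesome lone factor $\kappa(u-v)$ --- produced when one $v$-derivative hits the explicit factor $v$ --- is controlled. The paper proves $|D^b\iota|\le C\iota$ only on the half-region $0\le v\le u$, where $\kappa(u-v)\le 2$; to cover the remaining region $0\le u\le v$ it invokes the symmetry $\overline{g}(u,v)=g(v,u)$ together with the identity $g(u,v)=e^{u-v}\overline{g}(u,v)$, which is equivalent to \eqref{g_t} (the exponent is misprinted as $e^{v-u}$ in the paper's proof), and transfers the estimate by the Leibniz rule, exploiting that every derivative of the exponential factor equals $\pm$ that factor. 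Your dichotomy --- if $v\ge 1$ then $\kappa(u-v)\le v\kappa(u-v)\le P$, while if $v<1$ then $u-v>-1$ and monotonicity gives $\kappa(u-v)\le\kappa(-1)\le\kappa(-1)P$ --- establishes $|\partial^aP|\le C_aP$ on all of $\{u,v\ge 0\}$ at once, so no region-splitting and no appeal to the exponential identity is needed. What the paper's route buys is the reuse of a relation it needs elsewhere anyway; what yours buys is a self-contained, global estimate with no sign juggling. Your explicit two-term formula for $\partial_u^{a_1}\partial_v^{a_2}P$, exploiting that at most one $v$-derivative can land on the factor $v$, is also more transparent than the paper's generic appeal to the Leibniz rule, and all the steps in it check out.
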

\begin{proof}
Consider the function:
\[\iota(u,v):=1+v\kappa(u-v).\]
First note that 
   \begin{align*}
    \lvert \kappa ^{(k)}(x)\vert=\int_0^2s^ke^{-xs}\dif s\leq 2^k\kappa(x).
   \end{align*}
Using this, that $\kappa(s)\leq 2$ for $0\leq s$, and the Leibniz rule, one finds for any $b\in \N^2$ a constant $C=C(b)>0$ such that, for all $0\leq v\leq u$:
  \begin{align*}
   \lvert D^b (\iota(u,v))\rvert &\le C\iota(u,v).
  \end{align*}
Next, using that 
\[\left(x^{-\frac{1}{2}}\right)^{(k)}=c_kx^{-k-\frac{1}{2}}, \ \ c_k=(-1)^k\ \frac{2k-1}{2}\cdot \frac{2k-3}{2}\cdot \ldots \cdot \frac{1}{2},\]  
and the chain rule, we obtain the following estimate for $0\leq v\leq u$:  
\begin{align*}
    \lvert D^a \iota^{-\frac12}\rvert &=\lvert \sum_{1\le k \le |a|}{ c_k\iota^{-k-\frac{1}{2}}\sum_{a^1+\dots +a^k=a}{D^{a^1}(\iota)\dots D^{a^k}(\iota)}}\rvert\leq C\iota^{-\frac12}.
\end{align*} 
Since $g=\iota^{-\frac{1}{2}}$, we obtain \eqref{gg} on the domain $0\leq v\leq u$. In order to prove the estimate also for $0\leq u\leq v$, consider the function $\overline{g}(u,v):=g(v,u)$. Clearly, $\overline{g}$ satisfies the version of inequality \eqref{gg} for $0\leq u\leq v$. Note the following relation (which is equivalent to \eqref{g_t}):
\[g(u,v)=e^{v-u}\overline{g}(u,v).\]
Using this, we obtain \eqref{gg} also for $0\leq u\leq v$:
\begin{align*}
|D^a(g(u,v))|&=|\sum_{a^1+a^2=a}D^{a^1}(e^{v-u})D^{a^2}(\overline{g}(u,v))|\\
&\leq Ce^{v-u}\overline{g}(u,v)=Cg(u,v).\qedhere
\end{align*}
\end{proof}

We provide now estimates for the partial derivatives of $g_t$ and $\overline{g}_t$. 

\begin{lemma}\label{lemma:bounds on g}
For $a\in \N^3$, with $|a|=k$, there is $C=C(a)$ such that
\[|D^ag_t|\leq C \sigma_{k,-k}g_t,\ \ \ \ \textrm{and} \ \ \ \  
   |D^a\overline{g}_t|\leq C \sigma_{k,-k}\overline{g}_t.\]
\end{lemma}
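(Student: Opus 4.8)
The plan is to reduce the estimates for the three-variable functions $g_t$ and $\overline{g}_t$ to the two-variable estimate already established in Lemma \ref{lemma gg}. The key observation is that $g_t$ depends on $(x,y,z)$ only through $r^2=x^2+y^2$ and $z^2$, scaled by $t$; precisely, writing $u:=t r^2$ and $v:=t z^2$, we have $g_t = g(u,v)$ with $g$ the function from Lemma \ref{lemma gg}, since $tf = t(r^2-z^2)=u-v$ and $t z^2 \kappa(tf)=v\kappa(u-v)$. Thus $g_t$ is the composition of the smooth map $(x,y,z)\mapsto (tr^2, tz^2)$ with $g$, and I would attack $D^a g_t$ via the chain rule.

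First I would record the partial derivatives of the inner map. Each of $u=t(x^2+y^2)$ and $v=tz^2$ is a quadratic polynomial in $(x,y,z)$ with a factor of $t$, so every first-order derivative produces a factor $t\cdot(\text{linear in }x,y,z)$, bounded by $C\, tR$, and all derivatives of order $\geq 3$ vanish. Applying the general chain rule from Subsection \ref{subsection: flow estimates}, a derivative $D^a g_t$ with $|a|=k$ is a sum of terms $D^b g(u,v)\cdot \prod D^{a^{ij}}(u \text{ or } v)$, where $|b|=\sum b_i$ counts the number of inner factors and the multi-indices $a^{ij}$ partition $a$. Since only first- and second-order derivatives of $u,v$ are nonzero, and each nonzero inner derivative of total order $\ell\in\{1,2\}$ contributes a factor bounded by $C\, t R^{2-\ell}$, a term with $|b|$ inner factors absorbing a total of $k$ derivatives contributes $t^{|b|} R^{2|b|-k}$ up to constants. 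By Lemma \ref{lemma gg}, $|D^b g(u,v)|\leq C\, g(u,v)=C\, g_t$, so each term is bounded by $C\, t^{|b|} R^{2|b|-k}\, g_t$ with $1\leq |b|\leq k$ (and the single term $|b|=0$ giving $g_t$ itself, i.e. $\sigma_{0,0}=1$). Summing over $0\leq |b|\leq k$ and recognizing $\sum_{j} t^{j} R^{2j-k}=\sigma_{k,-k}$ yields exactly $|D^a g_t|\leq C\,\sigma_{k,-k}\, g_t$.

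For $\overline{g}_t$ I would use the symmetry already built into the proof of Lemma \ref{lemma gg}: setting $\overline{g}(u,v)=g(v,u)$, one has $\overline{g}_t=\overline{g}(tr^2,tz^2)$, so the identical chain-rule argument applies after swapping the roles of the two inner functions, and $|D^b\overline{g}(u,v)|\leq C\,\overline{g}_t$ follows from Lemma \ref{lemma gg} applied on the symmetric domain. Hence $|D^a\overline{g}_t|\leq C\,\sigma_{k,-k}\,\overline{g}_t$ by the same bookkeeping.

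The main obstacle is purely combinatorial: verifying that the powers of $t$ and $R$ generated by the chain rule match the definition of $\sigma_{k,-k}$ exactly, i.e. that a term pulling out $|b|$ inner factors really produces $t^{|b|}R^{2|b|-k}$ and that $|b|$ ranges over $0,\dots,k$. This requires careful tracking of how the $k$ outer derivatives are distributed among the inner factors, using that each inner factor can absorb at most two derivatives before vanishing, so that a term with $j$ inner factors can only arise when $j\leq k$; the degree identity $d-2j=-k$ with $d=2j-k$ then confirms the index $-k$. No analytic difficulty arises beyond invoking Lemma \ref{lemma gg}; the work is entirely in organizing the chain-rule expansion and matching it term by term to the $\sigma_{k,l}$ notation.
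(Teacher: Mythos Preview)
Your proposal is correct and follows essentially the same route as the paper: write $g_t=g(tr^2,tz^2)$, apply the general chain rule, bound each inner factor $D^{a^{ij}}(tr^2)$ or $D^{a^{ij}}(tz^2)$ by $C\,t R^{2-|a^{ij}|}$, invoke Lemma~\ref{lemma gg} for the outer factor, and collect the result as $\sigma_{k,-k}\,g_t$; the argument for $\overline{g}_t$ is symmetric. One small point of bookkeeping: since each inner factor absorbs at least one and at most two derivatives, the number $|b|$ of inner factors actually satisfies $k/2\leq |b|\leq k$, not $0\leq |b|\leq k$ as you wrote; this matches exactly the summation range in the definition of $\sigma_{k,-k}$ (where $m=\max(0,k/2)=k/2$), so no spurious negative powers of $R$ appear.
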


\begin{proof}
We have that $g_t=g(tr^2,tz^2)$. Therefore, by the chain rule: 
 \begin{alignat*}{2}
    \lvert D^a g_t\rvert &=\; && \lvert \sum_{1\le |b| \le |a|}{D^b(g)(tr^2,tz^2){\sum}' {\Pi_{i=1}^{b_1}{D^{a^{1i}}(tr^2)}\Pi_{i=1}^{b_2}{D^{a^{2i}}(tz^2)}}}\rvert,
   \end{alignat*}
where $b=(b_1,b_2)$ and the sum ${\sum}'$ is over all decompositions 
\[a=a^{11} +\dots+ a^{1b_1}  +a^{21} +\dots +a^{2b_2},\] with $a^{ji}\in \N^3$ and $1\leq |a^{ji}|$. Note that $|a^{ji}|\le 2$ since otherwise $D^{a^{1i}}(tr^2)$ and $D^{a^{2i}}(tz^2)$ are zero, respectively. Hence we have
   \begin{align*}
    b_j &\le \sum_{i=1}^{b_j}{\lvert a^{ji}\rvert} \le 2b_j,
   \end{align*}
   and therefore
   $\lvert b \rvert  \le k \le 2\lvert b\rvert$, which means $\frac{k}{2}\le \lvert b\rvert \le k$. Moreover, 
   \begin{align*}
    |D^{a^{1i}}(tr^2)|\le 2tR^{2-\lvert a^{1i}\rvert}
   \end{align*}
   and similarly for $tz^2$.
Thus we obtain the estimate:
\[|{\Pi_{i=1}^{b_1}{D^{a^{1i}}(tr^2)}\Pi_{i=1}^{b_2}{D^{a^{2i}}(tz^2)}}|\leq C t^{|b|}R^{2|b|-k}.\]
Using now the previous lemma, we the first inequality: \[\left\lvert D^a g_t \right\rvert \leq C\sum_{k/2\leq j\leq k}t^jR^{2j-k}g_t=C \sigma_{k,-k}g_t.\]
The statement for $\overline{g}_t$ is proven similarly. 
\end{proof}

\subsection{Estimates on the flow (proof of Lemma \ref{convergence of the flow})}

Next, we estimate the partial derivatives of the flow:

\begin{lemma}\label{lemma:bounds on the flow}
For $a\in \N^3$, with $k=|a|$, there is $C=C(a)$ such that:
\[|D^ax_t|\leq C \sigma_{k,1-k} g_t ,\ \ \ \
|D^ay_t|\leq C \sigma_{k,1-k} g_t , \ \ \ \ 
|D^az_t|\leq C \sigma_{k,1-k}\overline{g}_t.\]
\end{lemma}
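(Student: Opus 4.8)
The plan is to reduce everything to the estimates on $g_t$ and $\overline{g}_t$ already established in Lemma \ref{lemma:bounds on g}, using the fact that, by Lemma \ref{flow lines}, each component of the flow is just a coordinate function times one of these two scalar families: $x_t = x g_t$, $y_t = y g_t$, and $z_t = z \overline{g}_t$. The key simplification is that a coordinate function carries almost no derivatives. Indeed, for the multi-index $e_1 = (1,0,0)$ one has $D^b(x) = x$ if $b = 0$, $D^{e_1}(x) = 1$, and $D^b(x) = 0$ for every other $b$; this is exactly what makes the Leibniz expansion collapse.

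First I would apply the general Leibniz rule to $x_t = x g_t$. By the observation above, only two terms survive:
\[
D^a x_t = x\, D^a g_t + D^{a-e_1} g_t,
\]
where the second term is present only when $a_1 \geq 1$. I would then bound each term via Lemma \ref{lemma:bounds on g}. For the first term, using $|x| \leq R$ and $|D^a g_t| \leq C\sigma_{k,-k}\, g_t$, together with the relation $R\,\sigma_{k,-k} = \sigma_{0,1}\sigma_{k,-k} \leq \sigma_{k,1-k}$ from \eqref{submult}, one gets $|x\, D^a g_t| \leq C\sigma_{k,1-k}\, g_t$. For the second term, $|a - e_1| = k-1$, so Lemma \ref{lemma:bounds on g} gives $|D^{a-e_1} g_t| \leq C\sigma_{k-1,\,1-k}\, g_t$, and since enlarging the upper index only adds nonnegative monomials, $\sigma_{k-1,1-k} \leq \sigma_{k,1-k}$. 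Summing the two contributions yields $|D^a x_t| \leq C\sigma_{k,1-k}\, g_t$.

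The estimate for $y_t = y g_t$ is literally the same argument with $e_2=(0,1,0)$ in place of $e_1$, and the estimate for $z_t = z\overline{g}_t$ is the same argument with $e_3=(0,0,1)$ and $\overline{g}_t$ in place of $g_t$, producing the bound with $\overline{g}_t$. Since the whole statement is a direct corollary of Lemma \ref{lemma:bounds on g} and the algebra of the polynomials $\sigma_{k,l}$, I do not expect a genuine obstacle; the only point requiring care is the degree bookkeeping, namely verifying that both the ``$R$ times a degree-$k$'' contribution and the ``degree-$(k-1)$'' contribution land inside $\sigma_{k,1-k}$, which is precisely what \eqref{submult} and the monotonicity of $\sigma_{k,l}$ in its first index guarantee.
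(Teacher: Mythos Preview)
Your proposal is correct and matches the paper's proof essentially line for line: the paper also writes $D^a(x_t)=xD^a(g_t)+D^{a-e_1}(g_t)$, bounds the first term using $|x|\leq R$ and \eqref{submult} to get $R\sigma_{k,-k}\leq C\sigma_{k,1-k}$, bounds the second by $\sigma_{k-1,1-k}\leq \sigma_{k,1-k}$, and then says the other two cases are identical.
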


\begin{proof}
Recall that $x_t=xg_t$. Therefore:
\[D^a(x_t)=xD^a(g_t)+D^{b}(g_t),\]
where $b=a-(1,0,0)$ and $D^{b}=0$ if $b\notin\N^3$. Using that $|x|\leq R$ and \eqref{submult}, we obtain
\[\left\lvert D^a x_t \right\rvert \leq 
C\big(R\sigma_{k,-k}+\sigma_{k-1,1-k}\big)g_t\leq 
C\sigma_{k,1-k}g_t. \]
The other two estimates are proven in the same way. 
\end{proof}

We are now ready to show convergence of the flow away from $Z$: 

\begin{proof}[Proof of Lemma \ref{convergence of the flow}]
It suffices to consider compact sets of the form:
\[K_n^{\epsilon}:=\overline{B}_n(0)\cap \{|f|\geq \epsilon\}.\]
By the discussion in Subsection \ref{subsection: limits} we need to bound the partial derivatives of $\phi'_t := \frac{\dif}{\dif s}\phi_s |_{s=t}$ on $K_{n}^{\epsilon}$ by a positive integrable function. Since $\phi_t$ is the flow of $W=-z^2x\partial_x -z^2y\partial_y -(x^2+y^2)z\partial_z$, we have that 
\begin{align*}
x'_t&=-z_t^2x_t,\\
y'_t&=-z_t^2y_t,\\
z'_t&=-(x_t^2+y_t^2)z_t.
\end{align*}
Note that, for $R\leq n$ and $1\leq t$, there is $C=C(n,i,l)$ such that 
\[\sigma_{i,l}\leq C t^i.\]
Therefore, using the Leibniz identity and Lemma \ref{lemma:bounds on the flow}, we find for any $a\in \N^3$ a constant $C=C(a)$ such that, for $t\geq 1$,
the following hold on $\overline{B}_n(0)$
   \begin{align*}
    \lvert D^a x'_t\rvert &\le C\, t^{k}\, g_t\, \overline{g}^2_t,\\
    \lvert D^a y'_t\rvert &\le C\, t^{k}\, g_t\, \overline{g}^2_t,\\
    \lvert D^a z'_t\rvert &\le C\, t^{k}\, g^2_t\, \overline{g}_t.
   \end{align*}
Next, note that \eqref{g_t} and $g_t\leq 1$ give:
\[g_t\overline{g}_t=e^{-tf}g_t^2\leq e^{-ft},\]
and similarly, by exchanging their role, we obtain: 
\[g_t\overline{g}_t=e^{tf}\overline{g}_t^2\leq e^{ft}.\]
Thus, the following estimate holds:
\[g_t\overline{g}_t\leq e^{-t|f|}.\]
 Therefore, we obtain for $t\geq 1$:
\[\norm{\phi_t'}_k^{K_{n}^{\epsilon}} \leq C\, t^{k}\, e^{-\epsilon t}.\]
Since the right hand side is integrable, the conclusion follows.
\end{proof}

\subsection{Estimates for the pull-back (proof of Lemma \ref{well-defined})}

We will prove all estimates on the closed ball of radius $n\geq 1$, denoted:
\[K:=\overline{B}_n(0)\] 

First, we estimate the pullback under $\phi_t$ of flat forms. Flatness will be used to increase the degrees of $R$, $g_t$ and $\overline{g}_{t}$ in our estimates. 
\begin{lemma}\label{pullback estimates}
For every $d\in\N$ and $a\in \N^3$ with $|a|=k$, there is a constant $C=C(d,a)$ such that, for any flat form $\alpha\in \Omega^{i}_0(K)$, and any $t\geq 0$:
 \[|D^a(\phi_t^*\alpha)|\leq C\norm{\alpha}^K_{k+d}\, \sigma_{k+i,d}\, (g_t+\overline{g}_t)^{k+d+i},\]
 holds on $K$.
\end{lemma}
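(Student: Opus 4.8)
The plan is to reduce the estimate to two ingredients: the behaviour of a flat \emph{function} composed with the flow, and the derivative bounds on the flow components already recorded in Lemma \ref{lemma:bounds on the flow}. First I would write $\alpha=\sum_I \alpha_I\,\dif x^I$, where $I$ runs over increasing multi-indices of length $i$ in $\{x,y,z\}$, $\dif x^I:=\dif x_{I_1}\wedge\cdots\wedge\dif x_{I_i}$, and each coefficient $\alpha_I$ is flat at $0$. Pulling back,
\[\phi_t^*\alpha=\sum_I (\alpha_I\circ\phi_t)\,\dif (x_{I_1})_t\wedge\cdots\wedge\dif (x_{I_i})_t,\]
so the coefficient of $\dif x^J$ in $\phi_t^*\alpha$ is a finite sum of terms $(\alpha_I\circ\phi_t)\prod_{l=1}^i \partial_{x_{m_l}}(x_{I_l})_t$, coming from the expansion of the $i\times i$ minors of the Jacobian of $\phi_t$. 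Applying $D^a$ and the Leibniz rule, it then suffices to bound, for each decomposition $a=a^0+a^1+\cdots+a^i$, the single factor $D^{a^0}(\alpha_I\circ\phi_t)$ together with the $i$ factors $D^{a^l}\big(\partial_{x_{m_l}}(x_{I_l})_t\big)$. The latter are derivatives of order $|a^l|+1$ of a flow component, so Lemma \ref{lemma:bounds on the flow} gives $|D^{a^l}(\partial(x_{I_l})_t)|\le C\,\sigma_{|a^l|+1,\,-|a^l|}\,(g_t+\overline{g}_t)$ at once.

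The core of the argument is a composition estimate, which I would isolate as a sub-lemma: for $\psi$ flat at $0$, every $a$ with $|a|=k'$, and every $d'\in\N$,
\[|D^a(\psi\circ\phi_t)|\le C\,\norm{\psi}^K_{k'+d'}\,\sigma_{k',d'}\,(g_t+\overline{g}_t)^{k'+d'}\qquad\text{on }K.\]
Its proof rests on an elementary flatness bound: since $\phi_t$ maps $K$ into itself (because $\Lie_W(R^2)\le 0$) and $R_t=|\phi_t|$ satisfies $R_t^2=r^2g_t^2+z^2\overline{g}_t^2\le R^2(g_t+\overline{g}_t)^2$, Taylor's theorem applied to the flat function $D^b\psi$ yields $|(D^b\psi)(\phi_t)|\le C\,\norm{\psi}^K_{|b|+d'}\,R^{d'}(g_t+\overline{g}_t)^{d'}$ for every $d'$. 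Expanding $D^a(\psi\circ\phi_t)$ by the chain rule and bounding the flow-derivative factors by Lemma \ref{lemma:bounds on the flow}, the term with $|b|$ outer factors becomes $C\,\norm{\psi}^K_{|b|+d'}\,\sigma_{k',\,d'+|b|-k'}\,(g_t+\overline{g}_t)^{d'+|b|}$ after using the submultiplicativity \eqref{submult}. The key trick is that $|b|$ ranges over $1\le|b|\le k'$, so to obtain a uniform bound I would run the flatness estimate on the $|b|$-term with exponent $d'+(k'-|b|)$ in place of $d'$; this pins the total $(g_t+\overline{g}_t)$-exponent at $k'+d'$, the $\sigma$-index at $d'$, and the required regularity at $k'+d'$, so that summing over the finitely many $b$ closes the sub-lemma.

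Finally I would combine the pieces. Applying the sub-lemma to $D^{a^0}(\alpha_I\circ\phi_t)$ with the adapted flatness exponent $d^0:=d+(k-|a^0|)$, and multiplying by the product of the $i$ flow-derivative factors, estimated via \eqref{submult} by $\sigma_{(k-|a^0|)+i,\,-(k-|a^0|)}\,(g_t+\overline{g}_t)^i$, the $\sigma$-indices collapse to $\sigma_{k+i,d}$, the $(g_t+\overline{g}_t)$-exponent to $|a^0|+d^0+i=k+d+i$, and the regularity required of $\alpha_I$ to $C^{k+d}$ — exactly the target. Summing over the finitely many Leibniz decompositions and over $I,J$, and absorbing $\norm{\alpha_I}^K_{k+d}\le C\,\norm{\alpha}^K_{k+d}$, yields the claim. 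I expect the main obstacle to be the index bookkeeping in the sub-lemma: keeping track of how flatness, the number of chain-rule factors $|b|$, and the $\sigma$-indices interact, and in particular recognising that trading one unit of flatness for one unit of chain-rule order is precisely what makes the $(g_t+\overline{g}_t)$-exponent uniform across terms. The only other point needing care is the verification that $\phi_t(K)\subseteq K$, without which the seminorms $\norm{\cdot}^K$ would not control the composition.
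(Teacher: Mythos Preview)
Your proposal is correct and follows essentially the same three-step structure as the paper's proof: a flatness/Taylor estimate for $\psi\circ\phi_t$ with no derivatives, a chain-rule step for $D^a(\psi\circ\phi_t)$ with the same shifted flatness exponent $d\leftarrow d+(k-|b|)$, and a Leibniz step handling the Jacobian minors via Lemma \ref{lemma:bounds on the flow}. The only cosmetic differences are that the paper keeps the minor $M(\phi_t)$ as a single block (rather than expanding it into products of entries) and obtains the base estimate via the monomial bound $|(\phi_t(v))^b|\le R^d(g_t+\overline{g}_t)^d$ rather than via your $R_t\le R(g_t+\overline{g}_t)$; both routes give the same inequality.
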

\begin{proof}
Step 1: we first prove the estimate for $k=i=0$, i.e.\ for a function $\chi\in C^{\infty}_0(K)$, and for $a=(0,0,0)$. If also $d=0$, the estimate is obvious, since $\sigma_{0,0}=1$. So let $d>0$. Since $\chi$ is flat at $0$, the Taylor formula with integral remainder gives:
\[\chi(v)=\sum_{|b|=d}v^b\int_{0}^1d(1-s)^{d-1}(D^b\chi)(sv)\dif s,\]
where for $v=(x,y,z)$ and $b=(b_1,b_2,b_3)$ we denoted $v^b=x^{b_1}y^{b_2}z^{b_3}$. Thus: 
\[\phi_t^*(\chi)(v)=\sum_{|b|=d}(\phi_t(v))^b\int_{0}^1d(1-s)^{d-1}(D^b\chi)(s\phi_t(v))\dif s.\]
Since $\phi_t(v)=(xg_t,yg_t,z \overline{g}_t)$, we have that
\[|(\phi_t(v))^b|\leq R^d g_t^{b_1+b_2}\, \overline{g}_t^{b_3}\leq R^d(g_t+ \overline{g}_t)^d.\]
On the other hand, since $s\phi_t(v)\in K$, we have that:
\[\Big| \int_{0}^1d(1-s)^{d-1}(D^b\chi)(s\phi_t(v))\dif s\Big|\leq C\norm{\chi}^K_{d}.\]
Using these inequalities and $\sigma_{0,d}=R^d$ we obtain the estimate in this case.

Step 2: we prove now the estimate for a flat function $\chi$, and $a\in \N^3$ with $|a|=k\geq 1$. We use the chain rule to write:
\[D^a(\chi\circ \phi_t)=\sum_{1\leq |b|\leq k}D^b(\chi)\circ \phi_t\ {\sum}'\prod_{j=1}^{b_1}D^{a^{1j}}(x_t)
\prod_{j=1}^{b_2}D^{a^{2j}}(y_t)
\prod_{j=1}^{b_3}D^{a^{3j}}(z_t),
\]
where $b=(b_1,b_2,b_3)$ and $\sum'$ is the sum over all non-trivial decompositions: 
\[a=\sum_{i=1}^3\sum_{j=1}^{b_i}a^{ij}.\]
Since $D^b(\chi)$ is flat at zero, we apply Step 1 with $d\leftarrow k-|b|+d$:
\[|D^b(\chi)\circ \phi_t|\leq C\norm{\chi}^K_{k+d}\ \sigma_{0,k-|b|+d}(g_t+\overline{g}_t)^{k-|b|+d}.\]
Next, by applying Lemma \ref{lemma:bounds on the flow} and  \eqref{submult}, we obtain:
\begin{align*}
\big|\prod_{j=1}^{b_1}D^{a^{1j}}(x_t)
\prod_{j=1}^{b_2}D^{a^{2j}}(y_t)
\prod_{j=1}^{b_3}D^{a^{3j}}(z_t)\big|&\leq C\sigma_{k,|b|-k}g_t^{b_1+b_2}\,\overline{g}_t^{b_3}\\
&\leq C\sigma_{k,|b|-k}(g_t+\overline{g}_t)^{|b|}.
\end{align*}
Using again \eqref{submult}, we obtain the estimate in this case.

Step 3: let $\alpha\in \Omega^i_0(K)$, $i\geq 1$, and $a\in \N^3$. Note that the coefficients of $\phi_t^*(\alpha)$ are sums of elements of the form \[\chi\circ \phi_t\cdot M(\phi_t),\] 
where $\chi\in C^{\infty}_0(K)$ is a coefficient of $\alpha$ and $M(\phi_t)$ denotes the determinant of a minor of rank $i$ of the Jacobian matrix of $\phi_t$. By the Leibniz rule:
\[D^a(\chi\circ \phi_t\cdot M(\phi_t))=\sum_{b+c=a}D^b(\chi\circ \phi_t)D^c(M(\phi_t)).\]
For the first term, we apply Step 2 with $d\leftarrow k-|b|+d=|c|+d$:
\[|D^b(\chi\circ \phi_t)|\leq C\norm{\alpha}^K_{k+d}\sigma_{|b|,|c|+d}(g_t+\overline{g}_t)^{k+d}.\]
Note that $M(\phi_t)$ is a homogeneous polynomial of degree $i$ in the first order partial derivatives of $x_t$, $y_t$ and $z_t$. By Lemma \ref{lemma:bounds on the flow} each such partial derivatives satisfies: 
\[\Big|D^{e}\Big(\frac{\partial u_t}{\partial v}\Big)\Big|\leq C\sigma_{|e|+1,-|e|}(g_t+\overline{g}_t),\]
where $u_t\in \{x_t,y_t,z_t\}$ and $v\in\{x,y,z\}$. Therefore, by applying the Leibniz rule and \eqref{submult}, we obtain:
\[|D^c M(\phi_t)|\leq C \sigma_{|c|+i,-|c|}(g_t+\overline{g}_t)^i.\]
These inequalities imply now the estimates from the statement. 
\end{proof}

Finally, we prove estimates for the derivative of the homotopy operator:

\begin{lemma}\label{derivative of h}
For every $d\in \N$ and $a\in \N^3$ with $|a|=k$, there is a constant $C=C(d,a)$ such that, for any flat form $\alpha\in \Omega^{i}_0(K)$,  and all $t\geq 0$:
 \[|D^a(h'_t(\alpha))|\leq C\norm{\alpha}^K_{k+d}\, \sigma_{k+i-1,d+3}\, g_t\, \overline{g}_t\, (g_t+\overline{g}_t)^{k+d+i},\]
 holds on $K$.
\end{lemma}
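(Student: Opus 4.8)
The plan is to differentiate in $t$ and exploit that $\phi_t$ is the flow of $W$. By the fundamental theorem of calculus applied to $h_t(\alpha)=\int_0^t i_W\phi_s^*(\alpha)\dif s$, we have $h_t'(\alpha)=i_W\phi_t^*(\alpha)$, and since $\phi_t$ is the flow of $W$ we have $\phi_t^*W=W$, so this equals $\phi_t^*(i_W\alpha)$. This last expression is the useful one: the coefficients of the $(i-1)$-form $i_W\alpha$ are of the shape $W^j\cdot\chi$ with $\chi$ a coefficient of $\alpha$, so after pulling back, the coefficients of $h_t'(\alpha)$ become finite sums of products $(W^j\circ\phi_t)\,(\chi\circ\phi_t)\,M_{i-1}(\phi_t)$, where $M_{i-1}(\phi_t)$ is the determinant of a rank-$(i-1)$ minor of the Jacobian of $\phi_t$. (For $i=0$ we have $i_W\alpha=0$ and there is nothing to prove, so assume $i\ge 1$.) I would then estimate $D^a$ of such a product by the Leibniz rule, splitting $a=c+e+f$ among the three factors.

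The heart of the argument is the first factor, and it is where the extra $g_t\overline{g}_t$ must come from. Using the explicit flow \eqref{flow in Cartesian}, the components of $W$ from \eqref{W} satisfy $W^x\circ\phi_t=-z^2x\,g_t\overline{g}_t^2$, $W^y\circ\phi_t=-z^2y\,g_t\overline{g}_t^2$ and $W^z\circ\phi_t=-r^2z\,g_t^2\overline{g}_t$; thus each $W^j\circ\phi_t$ is a degree-$3$ polynomial times $g_t^2\overline{g}_t$ or $g_t\overline{g}_t^2$, and in particular carries a factor $g_t\overline{g}_t$. A crude estimate treating $i_W\alpha$ as a generic flat form through Lemma \ref{pullback estimates} would only produce powers of $g_t+\overline{g}_t$ and would miss this factor, which is essential for integrability on $O\cup I$; one really needs that $W$ genuinely involves both the $r$- and the $z$-directions. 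Differentiating and combining the Leibniz rule with Lemma \ref{lemma:bounds on g} and the submultiplicativity \eqref{submult}, I expect to obtain $|D^c(W^j\circ\phi_t)|\le C\sigma_{|c|,3-|c|}\,g_t\overline{g}_t\,(g_t+\overline{g}_t)$.

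For the remaining two factors I would invoke the results already proven: Lemma \ref{pullback estimates} in the function case ($i=0$) gives, for every auxiliary flatness degree $m$, the bound $|D^e(\chi\circ\phi_t)|\le C\norm{\alpha}^K_{|e|+m}\,\sigma_{|e|,m}\,(g_t+\overline{g}_t)^{|e|+m}$, and the same computation as in Step 3 of Lemma \ref{pullback estimates}, now for a rank-$(i-1)$ minor, gives $|D^f M_{i-1}(\phi_t)|\le C\sigma_{|f|+i-1,-|f|}\,(g_t+\overline{g}_t)^{i-1}$ via Lemma \ref{lemma:bounds on the flow}. The decisive bookkeeping step is to apply the function estimate with the variable flatness degree $m:=|c|+|f|+d$, exactly as the degree was raised in Step 2 of Lemma \ref{pullback estimates}. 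Then $|e|+m=k+d$, so the seminorm is controlled by $\norm{\alpha}^K_{k+d}$, the three powers of $g_t+\overline{g}_t$ add up to $1+(k+d)+(i-1)=k+d+i$, and the three $\sigma$-factors collapse under \eqref{submult} to $\sigma_{|c|+|e|+|f|+i-1,\,(3-|c|)+m-|f|}=\sigma_{k+i-1,\,d+3}$, leaving the factored-out $g_t\overline{g}_t$. Summing over the finitely many Leibniz terms and minors yields the claimed bound. I expect the only real obstacle to be this bookkeeping — in particular the realization that the $g_t\overline{g}_t$ must be extracted from $W\circ\phi_t$, which forces the use of $\phi_t^*(i_W\alpha)$ rather than $i_W\phi_t^*(\alpha)$, together with the correct choice of the compensating flatness degree $m$.
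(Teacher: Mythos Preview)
Your proposal is correct and follows essentially the same route as the paper. The paper writes $h'_t(\alpha)=\sum_j u_t^j\,\phi_t^*(\alpha_j)$ with $u_t^j=W^j\circ\phi_t$ and $\alpha_j=i_{\partial_{x_j}}\alpha$, does a two-factor Leibniz split $a=b+c$, bounds $D^b u_t^j$ exactly as you do, and then invokes Lemma~\ref{pullback estimates} directly on the $(i-1)$-form $\phi_t^*(\alpha_j)$ with the shifted degree $d\leftarrow d+|b|$; your three-factor split simply unrolls that invocation by separating $\chi\circ\phi_t$ from the minor $M_{i-1}(\phi_t)$, and your bookkeeping with $m=|c|+|f|+d$ reproduces the same indices.
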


\begin{proof}
Recall that
\[h'_t(\alpha)=i_W\phi_t^*(\alpha)=\phi_t^*(i_W\alpha)=\sum_{j=1}^3u_t^j\,\phi_t^*(\alpha_j),\]
where
\[u_t^1=-x_tz_t^2,\ \  \ \ u_t^2=-y_tz_t^2,\ \ \ \  u_t^3=-(x_t^2+y_t^2)z_t\]
and 
\[\alpha_1=i_{\partial_x}\alpha,\ \ \ \ \alpha_2=i_{\partial_y}\alpha,\ \ \ \ \alpha_3=i_{\partial_z}\alpha.\]
First we apply the Leibniz rule:
\[D^a(u_t^j\,\phi_t^*(\alpha_j))=\sum_{b+c=a}D^b(u_t^j) D^c(\phi_t^*(\alpha_j)).\]
By using Lemma \ref{lemma:bounds on the flow}, the Leibniz rule and \eqref{submult}, we obtain:
\[|D^b u_t^j|\leq C \sigma_{|b|,3-|b|}\, g_t\, \overline{g}_t\,(g_t+\overline{g}_t).\]
By applying Lemma \ref{pullback estimates} with $d\leftarrow d+|b|$, we obtain:
\[|D^c(\phi_t^*(\alpha_j))|\leq C\norm{\alpha}^K_{k+d}\,\sigma_{|c|+i-1,d+|b|}\, (g_t+\overline{g}_t)^{k+d+i-1}.\]
These inequalities imply now the estimates from the statement. 
\end{proof}

Finally, we obtain: 
\begin{proof}[Proof of Lemma \ref{well-defined}]
Let $\alpha\in \Omega^i_0(\R^3)$ and $a\in \N^3$, with $|a|=k$. Lemma \ref{derivative of h} with $d=k+i-1$ gives the following on $K$:
\begin{align*}
|D^a(h'_t(\alpha))|&\leq C\norm{\alpha}^K_{2k+i-1}\, \sigma_{k+i-1,k+i+2}\, g_t\, \overline{g}_t\, (g_t+\overline{g}_t)^{2(k+i)-1}\\
&= C\norm{\alpha}^K_{2k+i-1}\, \sigma_{d,d+3}\, g_t\, \overline{g}_t\, (g_t+\overline{g}_t)^{2d+1}.
\end{align*}
Using the definition of the polynomials $\sigma_{k,l}$, that $g_t+\overline{g}_{t}\leq 2$, and Lemma \ref{lemma: finite integrals}, we evaluate the last term for $t>0$: 
\begin{align*}
\sigma_{d,d+3}\, g_t\, \overline{g}_t\, (g_t+\overline{g}_t)^{2d+1}&\leq C\sum_{j=0}^{d}t^jR^{2j+d+3}g_t\, \overline{g}_t\, (g_t+\overline{g}_t)^{2j+1}\leq CR^{d}t^{-\frac{3}{2}}.
\end{align*}
Since $R^d\leq n^d$, we obtain that there exists $C=C(n,k)$ such that for $t>0$:  
\[\norm{h'_t(\alpha)}^K_{k}\leq C \norm{\alpha}^K_{2k+i-1} t^{-\frac{3}{2}}.\]
Thus \eqref{conditions for limit} holds, and therefore $\lim_{t\to\infty}h_t(\alpha)$ exists with respect to the compact-open $C^{\infty}$-topology.
\end{proof}

\bibliography{sl2r_4}

\begin{thebibliography}{CFMT19}

\bibitem[Bry88]{Bry88}
J.-L. Brylinski.
\newblock A differential complex for {P}oisson manifolds.
\newblock {\em J. Differential Geom.}, 28(1):93--114, 1988.

\bibitem[CFMT19]{PMCT}
M.~Crainic, R.L. Fernandes, and D.~Mart\'{\i}nez~Torres.
\newblock Poisson manifolds of compact types ({PMCT} 1).
\newblock {\em J. Reine Angew. Math.}, 756:101--149, 2019.

\bibitem[Con84]{Conn84}
J.~F. Conn.
\newblock {Normal Forms for Analytic Poisson Structures}.
\newblock {\em Ann.~of Math.~(2)}, 19:577--601, 1984.

\bibitem[Con85]{Conn85}
J.~F. Conn.
\newblock {Normal Forms for Smooth Poisson Structures}.
\newblock {\em Ann.~of~Math.~(2)}, 121(3):565--593, 1985.

\bibitem[Dix96]{Dixmier96}
J.~Dixmier.
\newblock {\em {Enveloping Algebras}}.
\newblock American Mathematical Society, 1996.

\bibitem[DZ05]{Zung_Book}
J.-P. Dufour and N.T. Zung.
\newblock {\em Poisson structures and their normal forms}, volume 242 of {\em
  Progress in Mathematics}.
\newblock Birkh\"{a}user Verlag, Basel, 2005.

\bibitem[ELW99]{ELuWein}
S.~Evens, J.-H. Lu, and A.~Weinstein.
\newblock Transverse measures, the modular class and a cohomology pairing for
  {L}ie algebroids.
\newblock {\em Quart. J. Math. Oxford Ser. (2)}, 50(200):417--436, 1999.

\bibitem[Gam02]{Gammella}
A.~Gammella.
\newblock An approach to the tangential {P}oisson cohomology based on examples
  in duals of {L}ie algebras.
\newblock {\em Pacific J. Math.}, 203(2):283--320, 2002.

\bibitem[Gin96]{Ginz96}
V.~Ginzburg.
\newblock {Momentum mappings and Poisson cohomology}.
\newblock {\em Internat.~J.~Math.}, 7(3):329--358, 1996.

\bibitem[GMP14]{GMP}
V.~Guillemin, E.~Miranda, and A.R. Pires.
\newblock Symplectic and {P}oisson geometry on {$b$}-manifolds.
\newblock {\em Adv. Math.}, 264:864--896, 2014.

\bibitem[GW92]{GW92}
V.~Ginzburg and A.~Weinstein.
\newblock {Lie-Poisson structure on some Poisson Lie groups}.
\newblock {\em J.~Amer.~Math.~Soc.}, 5(2):445--453, 1992.

\bibitem[HS53]{Hoch1953}
G.~Hochschild and J-P. Serre.
\newblock {Cohomology of Lie Algebras}.
\newblock {\em Ann.~of~Math.~(2)}, 57(3):591--603, 1953.

\bibitem[{Kos}84]{Kos85}
J.-L. {Koszul}.
\newblock {Crochet de Schouten-Nijenhuis et cohomologie.}
\newblock In {\em {\'Elie Cartan et les math\'ematiques d'aujourd'hui. The
  mathematical heritage of Elie Cartan (Seminar), Lyon, June 25-29}}. 1984.

\bibitem[Lan16a]{Lanius_1}
M.~Lanius.
\newblock Poisson cohomology of a class of log symplectic manifolds.
\newblock {\em preprint}, arXiv:1605.03854, 2016.

\bibitem[Lan16b]{Lanius_2}
M.~Lanius.
\newblock Symplectic, poisson, and contact geometry on scattering manifolds.
\newblock {\em preprint}, arXiv:1603.02994, 2016.

\bibitem[LGPV13]{Laurent2013}
C.~Laurent-Gengoux, A.~Pichereau, and P.~Vanhaecke.
\newblock {\em {Poisson Structures}}.
\newblock Grundlehren der mathematischen Wissenschaften. Springer Berlin
  Heidelberg, Berlin, Heidelberg, 2013.

\bibitem[Lic77]{Lich77}
A.~Lichnerowicz.
\newblock {Les vari{\'{e}}t{\'{e}}s de Poisson et leurs alg{\`{e}}bres de Lie
  associ{\'{e}}es}.
\newblock {\em J.~Differential Geom.}, 12:253--300, 1977.

\bibitem[Mon02]{Monnier}
P.~Monnier.
\newblock Poisson cohomology in dimension two.
\newblock {\em Israel J. Math.}, 129:189--207, 2002.

\bibitem[MOT14]{MO14}
I.~M\u{a}rcu\cb{t} and B.~Osorno~Torres.
\newblock {Deformations of log-symplectic structures}.
\newblock {\em J.~Lond.~Math.~Soc.~(2)}, (1):197--–212, 2014.

\bibitem[Nak91]{Nakanishi_91}
N.~Nakanishi.
\newblock On the structure of infinitesimal automorphisms of linear {P}oisson
  manifolds. {I}.
\newblock {\em J. Math. Kyoto Univ.}, 31(1):71--82, 281--287, 1991.

\bibitem[Nak97]{Nakanishi_97}
N.~Nakanishi.
\newblock Poisson cohomology of plane quadratic {P}oisson structures.
\newblock {\em Publ. Res. Inst. Math. Sci.}, 33(1):73--89, 1997.

\bibitem[{Oso}15]{Torres15}
B.~{Osorno Torres}.
\newblock {\em {Codimension-one Symplectic Foliations: Constructions and
  Examples}}.
\newblock PhD thesis, Utrecht University, 2015.

\bibitem[Pic06]{Pichereau}
A.~Pichereau.
\newblock Poisson (co)homology and isolated singularities.
\newblock {\em J. Algebra}, 299(2):747--777, 2006.

\bibitem[Rad02]{Radko}
O.~Radko.
\newblock A classification of topologically stable {P}oisson structures on a
  compact oriented surface.
\newblock {\em J. Symplectic Geom.}, 1(3):523--542, 2002.

\bibitem[Vai90]{Vaisman}
I.~Vaisman.
\newblock Remarks on the {L}ichnerowicz-{P}oisson cohomology.
\newblock {\em Ann. Inst. Fourier (Grenoble)}, 40(4):951--963 (1991), 1990.

\bibitem[Wei83]{Wein83}
A.~Weinstein.
\newblock {The local structure of Poisson manifolds}.
\newblock {\em J.~Differential Geom.}, 18:523--557, 1983.

\bibitem[Wei87]{Wein87}
A.~Weinstein.
\newblock {Poisson geometry of the principal series and nonlinearizable
  structures}.
\newblock {\em J.~Differential Geom.}, 25:55--73, 1987.

\bibitem[Xu92]{Xu92}
P.~Xu.
\newblock {Poisson cohomology of regular Poisson manifolds}.
\newblock {\em Ann.~Inst.~Fourier~(Grenoble)}, 42(4):967--988, 1992.

\bibitem[Xu99]{Xu99}
P.~Xu.
\newblock Gerstenhaber algebras and {BV}-algebras in {P}oisson geometry.
\newblock {\em Comm. Math. Phys.}, 200(3):545--560, 1999.

\end{thebibliography}
\bibliographystyle{alpha}
\end{document}